\newtheorem{theorem}{Theorem}[section]
\newtheorem{definition}[theorem]{Definition}
\newtheorem{lemma}[theorem]{Lemma}
\newtheorem{corollary}[theorem]{Corollary}
\newtheorem{remark}[theorem]{Remark}
\newtheorem{assumption}[theorem]{Assumption}
\newtheorem{convention}[theorem]{Convention}
\newtheorem*{proposition*}{Proposition}
\newtheorem*{theorem*}{Theorem}
\newcommand{\R}{\mathbb{R}} 
\newcommand{\Z}{\mathbb{Z}}
\newcommand{\T}{\mathcal{T}}
\newcommand{\N}{\mathbb{N}}
\DeclareMathOperator{\ent}{ent}
\DeclareMathOperator{\Ent}{Ent}
\DeclareMathOperator{\E}{E}
\DeclareMathOperator{\argmax}{argmax}
\DeclareMathOperator{\tree}{\mathcal{T}}
\DeclareMathOperator{\geo}{\mathbf{g}}
\begin{document}

\title{A Variational principle for a non-integrable model}

\author{Georg Menz}
\address{Department of Mathematics, University of California, Los Angeles}
\email{gmenz@math.ucla.edu}

\author{Martin Tassy}
\address{Department of Mathematics, University of California, Los Angeles}
\email{mtassy@math.ucla.edu}

\subjclass[2010]{Primary: 82B20, 82B30, 82B41, Secondary: 	60J10.}
\keywords{Variational principles, non-integrable models, limit shapes, domino tilings, Glauber dynamics, Azuma-Hoeffding, gradient-Gibbs measures, entropy, concentration inequalities, ergodic measures.}

\date{February 6 2017}

\begin{abstract}
We show the existence of a variational principle for graph homomorphisms from $\Z^m$ to a $d$-regular tree. The technique is based on a discrete Kirszbraun theorem and a concentration inequality obtained through the dynamics of the model. As another consequence of the concentration inequality we also obtain the existence of a continuum of translation-invariant ergodic gradient Gibbs measures for graph homomorphisms from $\Z^m$ to a regular tree. The method is sufficiently robust such that it could be applied to other discrete models with a quite general target graphs.
\end{abstract}

\maketitle

\tableofcontents



\section{Introduction}

The appearance of limit shapes as a limiting behavior of discrete systems is a well-known and studied phenomenon in statistical physics and combinatorics (e.g.~\cite{Georgii}). Among others, models that exhibits limits shapes are domino tilings and dimer models (e.g.~\cite{Kas63,CEP96,CKP01}), polymer models, random tiling models in particular lozenge tilings (e.g.~\cite{Des98,LRS01,Wil04}), Gibbs models (e.g.~\cite{She05}), the Ising model (e.g.~\cite{DKS92,Cer06}), asymmetric exclusion processes (e.g.~\cite{FS06}), random matrices (see e.g.~\cite{Wig59,KieSpo99}), sandpile models (e.g.~\cite{LP08}), the six vertex model (e.g.~\cite{BCG2016,CoSp16,ReSr16}), Young tableaux (e.g.~\cite{LS77,VK77,PR07}).\\

Limit shapes appear in stiff systems whenever a boundary condition forces a certain response of the system. The main tool to explain limit shapes is a variational principle. The variational principle asymptotically characterizes the number of microscopic states i.e.~the microscopic entropy~$\Ent_n$, via a variational problem. This means that for large system sizes~$n$, the entropy of the system is given by maximizing a macroscopic entropy~$\E(f)$ over all admissible limiting profiles~$f \in \mathcal{A}$. The boundary conditions are incorporated in the admissibility condition. In formulas, the variational principle can be expressed as (see for example Theorem~\ref{p_main_result_variational_principle} below)
\begin{align}\label{e_general_variational_principle}
 \Ent_n \approx \inf_{f \in \mathcal A} \E(f), 
\end{align}
where the macroscopic entropy~$$\E(f)= \int \ent(\nabla f(x)) dx$$ can be calculated via a local quantity~$\ent (\nabla f(x))$. This local quantity is called local surface tension in this article. \\

Often, a consequence of a variational principle is that the uniform measure on the microscopic configurations concentrates around configurations that are close to the minimizer of the variational problem (see comments before Theorem~\ref{p_profile_theorem} below). This is related to the appearance of limit shapes on large scales. \\

In analogy to classical probability theory, one can understand the variational principle as an elaborated version of the law of large numbers. On large scales, the behavior of the system is determined by a deterministic quantity, namely the minimizer~$f$ of the macroscopic entropy. Hence, deriving a variational principle is often the first step in analyzing discrete models, before one attempts to study other questions like the fluctuations of the model. \\

A lot of inspiration for this article comes from the variational principle for domino tilings~\cite{CKP01} (see Figure~\ref{f_aztec_domino_tilings}). It is one of the fundamental results for studying domino tilings and the other integrable discrete models. A detailed analysis of the limit shapes for domino tilings was given in \cite{kenyon2007limit}. Recently a new constructive approach was developed in~\cite{CoSp16} for the determination of the Arctic curve (the frozen boundary of
the limit shape). The approach is discussed mainly in the framework
of the six vertex model, which is integrable. However, the method seems to be very robust. It is an interesting open question if this method can also be used to determine the Arctic curve in a non-integrable model.\\

\begin{figure}
 \centering
 \begin{subfigure}[b]{0.65\textwidth}
 \includegraphics[width=\textwidth]{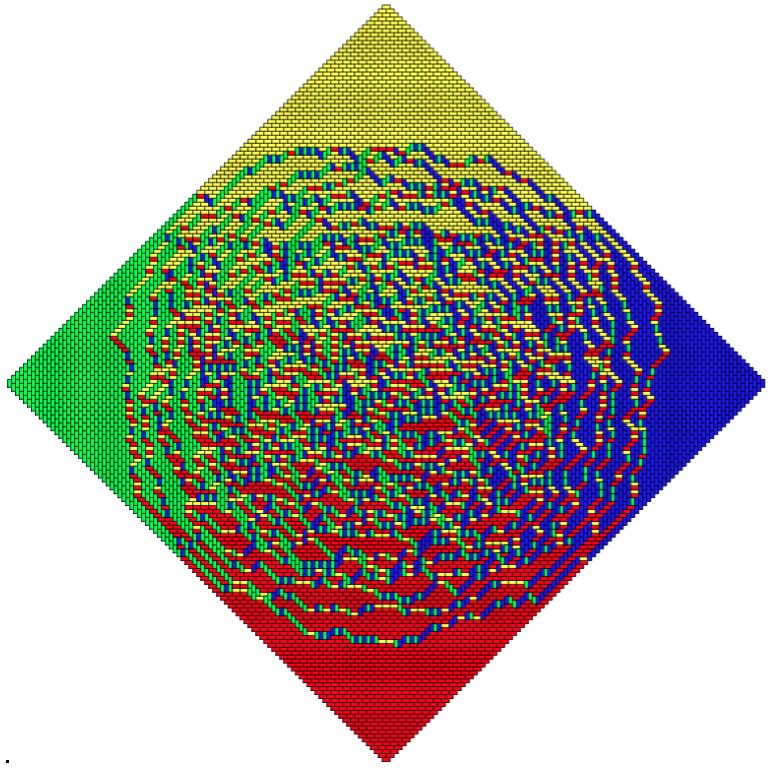}

\newpage
 
\caption{ An Aztec diamond for domino tilings. The combinatorics of the model is similar to Lipschitz functions from $\Z^2$ to $\Z$. (see~\cite{CKP01})}\label{f_aztec_domino_tilings}
 
 \end{subfigure}

 \begin{subfigure}[b]{0.65\textwidth}
 \includegraphics[width=\textwidth]{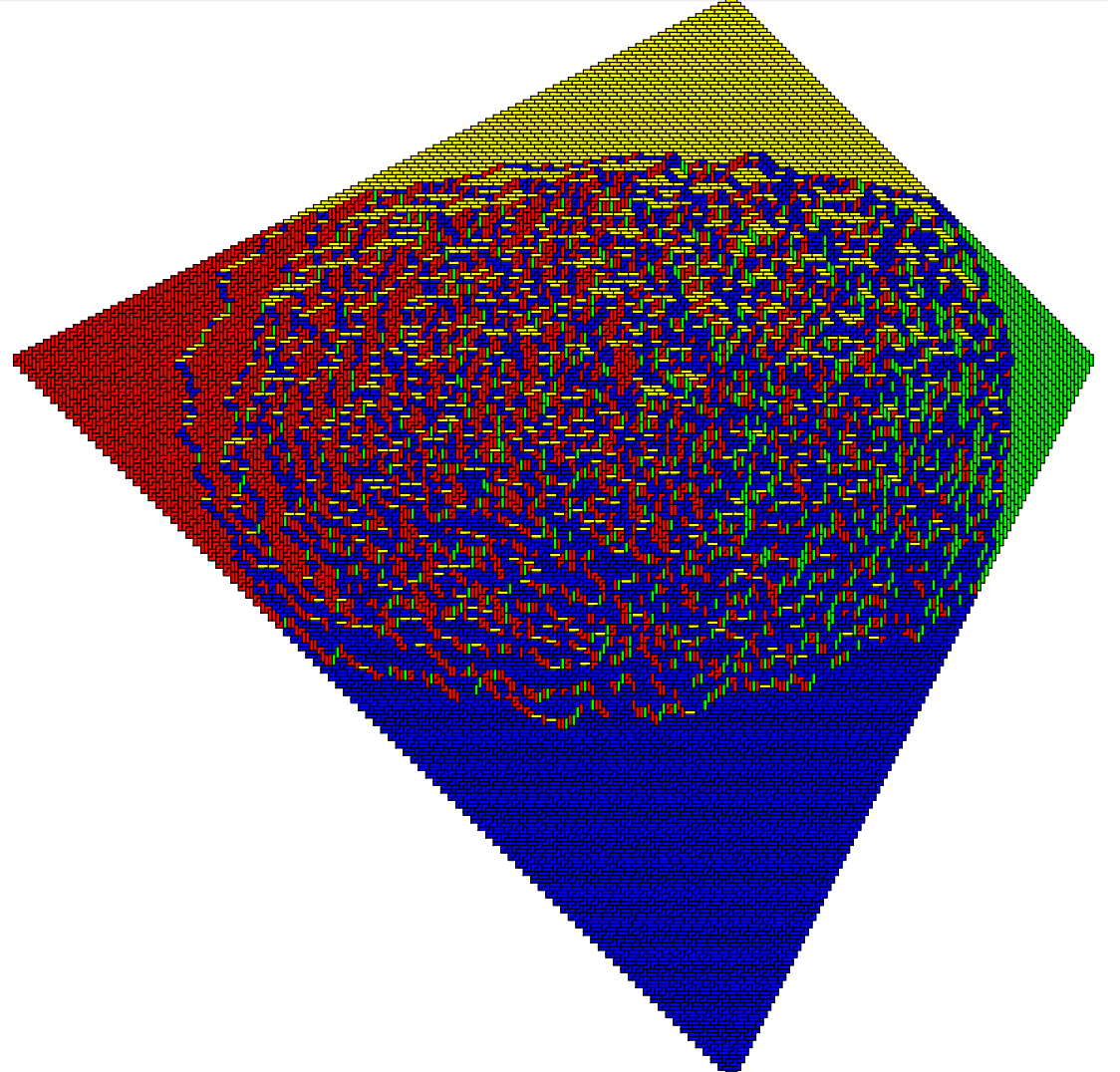}
 \caption{ An Aztec diamond for ribbon tilings. The combinatorics of the model is similar to Lipschitz functions from $\Z^2$ to $\Z^2$ (see \cite{sheffield2002ribbon}).}\label{f_ribbon}
 
 \end{subfigure}
 
 \end{figure}
 
 \begin{figure} 
\ContinuedFloat 
 
 \begin{subfigure}[b]{0.65\textwidth}
 \includegraphics[width=\textwidth]{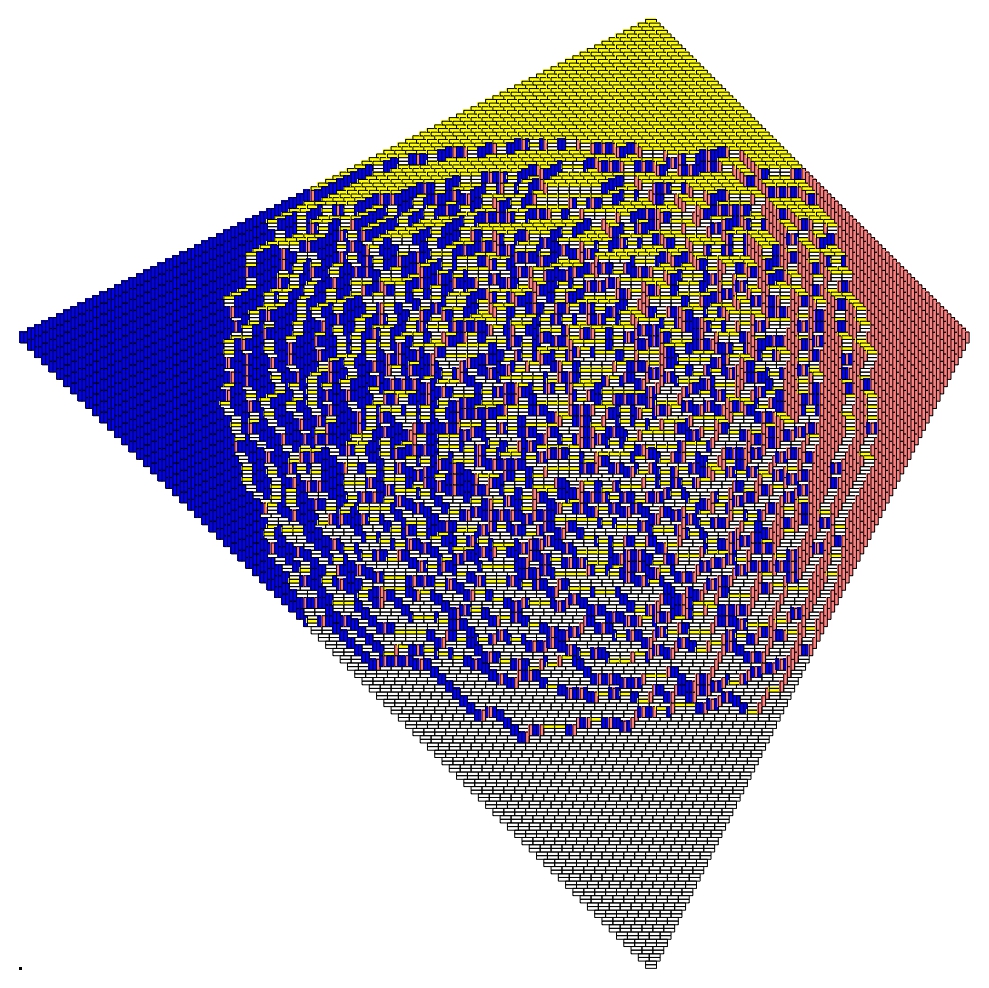}
 \caption{ An Aztec diamond tiling by $3 \times 1$ bars. The combinatorics of the model is similar to Lipschitz functions from $\Z^2$ to $\Z_3 \ast \Z_3$ (see \cite{kenyon1992tiling}).} \label{f_aztec_3_1_bars} 
 \end{subfigure} 
\begin{subfigure}[b]{0.65\textwidth}
 \includegraphics[width=\textwidth]{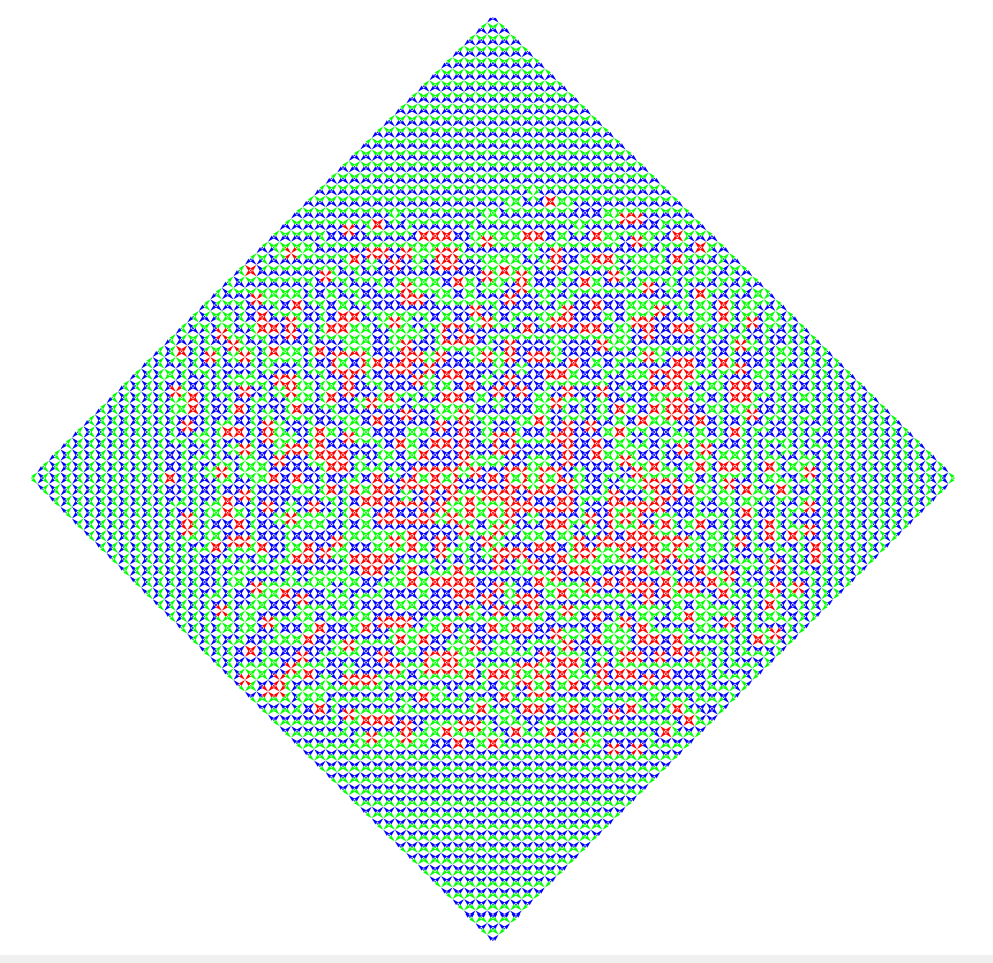}
 \caption{ An Aztec diamond for Graph homomorphisms in a $3$-regular tree. Each color represents one of the $\alpha_i$'s introduced in Section 3.}\label{f_aztec_graph_homo} 
 \end{subfigure}

\end{figure}

So far, all the tools that were developed to study variational principles of discrete models heavily rely on the intermediate value theorem. Specifically, two $\Z$ valued functions moving at different speed in $\Z^d$ must meet at some points of the lattice, whose points or clusters provide sets where the value of the two functions can be exchanged. This allows a general technique called cluster swapping (see \cite{She05}) and its variants. 
Up to the knowledge of the authors, there is no non-trivial example of a variational principle for which the target space of the underlying model does not have this property. However, simulations (see Figure~\ref{f_ribbon}, Figure~\ref{f_aztec_3_1_bars} and~Figure~\ref{f_aztec_graph_homo}) show that limit shapes also appear for discrete maps taking values in a large class of discrete target spaces. The purpose of this article is to go beyond $\Z$-valued models and to find out what properties of a discrete system lead to variational principles and limit shapes. As a general guideline, we strive to develop a robust method for deducing variational principles for graph homomorphisms that is purely based on general principles of statistical mechanics and not on particular methods for certain classes of graphs, similar to what was developed in~\cite{KieSpo99} for the limiting bulk distribution of eigenvalues of random matrices.\\

We are interested in graph homomorphisms because they provide a natural framework to study systems with hard constraints (see~\cite{BrWi00}) and because they are closely related to other classical model in statistical physics: the loop $O(n)$ model (see \cite{duminil2017exponential}) and tilings by bars (see \cite{tassythesis}). In this article, we consider the non-integrable model of graph homomorphisms from $\Z^m$ to a $d$-regular tree~$\mathcal{T}$. We want to point out the fact that in our variational principle the underlying lattice can have arbitrary dimension~$m \geq 2$. In the special case of~$m=2$ and~$\mathcal{T}= \mathbb{Z}$, our model is equivalent to the six-vertex model with uniform weights (cf.~\cite{Henk77,ChPeShTa18}). \\


We identified two properties that a model of discrete maps needs to satisfy in order to have a variational principle. The first one is a stability property that allows to glue together pieces of different configurations at a negligible entropic cost. As a consequence, perturbing the boundary condition on a microscopic scale does not change the macroscopic properties of the model. The second one is a concentration property.\\

In the case of discrete $\Z$-valued models, such as height functions of domino tilings or the antiferromagnetic Potts model, both properties can be deduced naturally. For deducing the first property, one uses that the space of configurations is a lattice. Then it is possible to quickly attach two configurations together provided that the boundary conditions are similar. This is done by using the minimum of two well-chosen extensions of those configurations. The second property, namely the concentration, is tackled by using a cluster swapping argument or an analog version of this argument for other systems (see e.g.~\cite{CEP96,She05}).\\

We want to emphasize again that those arguments are based on properties which rely on the intermediate value theorem in the target space $\Z$ and are not available for general graph homomorphisms. One of the main contributions of this article is that we provide alternative methods. These methods are not based on integrability but on weaker properties of graphs and of the underlying dynamics of the model. The authors believe that the principles behind those new arguments are robust. They should provide a possible line of attack to study variational principles and limiting behavior of a large class of discrete models.\\

Now, let us discuss how the two necessary properties, namely stability and concentration, are deduced without relying on integrability. The first property is obtained by using the discrete version of a well-known theorem for continuous metric spaces: the Kirszbraun theorem (see~Theorem~\ref{Kirszbraun} below). This theorem states that any graph homomorphism from a subset $S$ of $\Z^d$ to a regular tree $\mathcal{T}$, which is contracting for the graph distances in $\Z^d$ and $\mathcal{T}$, can be extended to the whole space $\Z^d$. Up to knowledge of the authors, the first version of a discrete Kirszbraun theorem was developed in the setting of tilings in \cite{tassythesis} and \cite{PST}. Using a new version of the Kirszbraun theorem for graph homomorphisms allows us to show that microscopic variations of the boundary conditions can be neglected on the macroscopic scale and thus do not influence the entropy of a system.\\

The second property, namely the concentration, is provided in~Theorem~\ref{main_concentration_section_3}. The concentration is deduced via a combination of a classical concentration inequality, namely the Azuma-Hoeffding inequality, with a coupling technique relying on dynamical properties of the model. 
Inspiration for this type of argument comes from~\cite{CEP96}, where the Azuma-Hoeffding inequality was used to show concentration for domino tilings. However, it is very difficult to apply directly the Azuma-Hoeffding inequality for more sophisticated models. This would need detailed information about the structure of the underlying space of configurations. Our dynamical approach circumvents this obstacle. One only has to understand the response of the system to changing the value of one point.\\

The natural process for our dynamical approach is the Glauber dynamics (see \cite{chandgotia2015entropyminimal} for details). This choice would be fine for studying graph homomorphisms to~$\mathbb{Z}$. However, using the Glauber dynamics does not work for the more complicated model of graph homomorphisms to a tree~$\mathcal{T}$. Heuristically, this can be understood from the observation that two simple random walks on a tree tend to diverge. We overcome this technical obstacle by two adaptations. On the one hand, we add to the original Glauber dynamics an extra non-local resampling step. On the other hand, we introduce a suitable quantity called depth. It turns out the modified Glauber dynamics conserves this quantity. This allows us to apply the Azuma-Hoeffding inequality and deduce concentration in depth. We then show that concentration in depth is sufficient for deducing our variational principle. \\

The main ingredients for the proof of a variational principle (see~\eqref{e_general_variational_principle} or Theorem~\ref{p_main_result_variational_principle}) are the equivalence of the different notions of the local surface tension, that the local surface tension is bounded from below, and that it is convex. For the equivalence, let us have a closer look at the definition of the local surface tension~$\ent(s)$. As usual, the local surface tension~$\ent(s)$ is defined as the limit of microscopic entropies~$\Ent_n$ on a box~$S_n$ with suitable chosen boundary conditions~(see Section~\ref{s_local_surface_tension}) i.e. 
\begin{align}\label{e_def_local_surface_tension_introduction}
 \ent(s) = \lim_{n \to \infty} \Ent_n.
\end{align}
The freedom of choosing different boundary conditions leads to a-priori different notions of the local surface tension. In particular, choosing the boundary condition to be a fixed state~$h_s$ with slope~$s$ leads to the notion of surface tension~$\ent^{\tt fixed}$ which is associated to fixed boundary conditions. Allowing the boundary condition to be any state~$g$ that is close to~$h_s$ leads to the notion of surface tension~$\ent^{\tt fluct}$ which is associated to fluctuating boundary conditions. Allowing periodic boundary conditions with slope~$s$ leads to the notion of surface tension~$\ent^{\tt per}$ which is associated to fluctuating boundary conditions. \\

A crucial ingredient for deducing a variational principle~\eqref{e_general_variational_principle} is that those a-priori distinct notions of the local surface tension are equivalent i.e.
\begin{align}
 \label{e_equivalence_surface_tensions}
\ent^{\tt fixed}= \ent^{\tt fluct}= \ent^{\tt per}.
\end{align}
This equivalence is deduced in this article in Theorem~\ref{geodesic_square}. The argument relies on the two basic properties outlined above, i.e.~the Kirszbraun theorem (cf.~Theorem~\ref{Kirszbraun}) and the concentration inequality (cf.~Theorem~\ref{main_concentration_section_3}). The existence of the limit~$\ent^{\tt fixed}$ is usually deduced via sub-additivity arguments. For example, in~\cite{FunSpo97} sub-additivity was exploited to show the existence of the local surface tension the continuous~$\nabla \phi$ model. In~\cite{MeKrTa17} it is shown that sub-additivity combined with the Kirszbraun theorem can be used to show the existence of the local surface tension in homogenization of random surfaces. However, those arguments do not carry over to periodic boundary conditions. Those argument cannot be used to show the a-priori existence of the local surface tension~$\ent^{\tt per}$ that is associated to periodic boundary conditions. In this article, we show the existence of the~$\ent^{\tt per}$ via a self-contained argument based on a combination of the Kirszbraun theorem and the concentration inequality (cf.~Theorem~\ref{p_existence_local_surface_tension} below).\\

In the same spirit, we also give a self-contained argument that the local surface tension~$\ent^{\tt per}$ is convex (see Theorem~\ref{p_convexity_local_surface_tension}). From convexity it follows that the variational problem given by our variational principle has a minimizer (cf.~Theorem~\ref{p_main_result_variational_principle} below). However, we do not prove that this minimizing limiting profile is unique. The uniqueness of the minimizer would follow if the local surface tension is strictly convex. We do not know if the local surface tension is strictly convex but we conjecture it. \\

Beside the equivalence~\eqref {e_equivalence_surface_tensions} and the convexity of~$\ent(s)$ (see Theorem~\ref{p_convexity_local_surface_tension}, Theorem~\ref{geodesic_square} and Lemma~\ref{p_entropy_depth_condition_local_surface_tension}), the only additional ingredients for the proof of the variational principle are very general principles, namely the compactness of Lipschitz functions on a bounded region and that Lipschitz functions can be very well approximated by piecewise affine functions on a simplicial complex (see Section~\ref{s_proof_variational_principle}).\\

Compared to~$\mathbb{Z}$, there are infinitely many ways to travel to infinity in a tree~$\mathcal{T}$. Those pathways to infinity are described by geodesics. Limit shapes are sensitive to the choice of geodesics on which the graph homomorphism travels on the boundary (see Figure~\ref{f_triangle} for an illustration). This adds another technical difficulty when deducing the variational principle for graph homomorphisms to a tree. The problem is to define the scaling limit of a graph homomorphism. For domino tilings the height function is an integer valued map, which allows a natural notion of a scaling limit. If the space of geodesics is more complex, as it is the case for trees, the notion of a scaling limit is less obvious. In order to define the scaling limit of a graph homomorphism to a tree one has to additionally keep track of the information on which geodesic the graph homomorphism is traveling on. This leads to a more subtle definition of the limiting profile which involves several compatibility conditions (see Definition~\ref{d_asymp_height_profile}). Another consequence is that the set~$\mathcal{A}$ of admissible limiting profiles~$f$, over which the continuous entropy~$\E(f)$ is minimized, has a more elaborated structure involving additional constraints.\\

In this article, we also discuss another consequence of the Kirszbraun theorem and the concentration inequality. It is the existence of a continuum of shift-invariant ergodic gradient Gibbs measures on tree-valued graph homomorphisms on~$\mathbb{Z}^m$ (see Section~\ref{s_existence_gradient_gibbs} and Theorem~\ref{ergodic}). This is also the reason why we choose periodic boundary conditions for defining the local surface tension~$\ent(s)$. This choice allows us to obtain measures which are translation-invariant. A key point for proving the existence of ergodic gradient Gibbs measures is also that the concentration inequality of Theorem~\ref{main_concentration_section_3} applies to those translation-invariant measures. Although the statement of Theorem~\ref{ergodic} does not imply the strict convexity of the local surface tension, we believe that it is a very strong indication that the surface tension must be strictly convex since it shows that different boundary conditions will lead to different phases.\\

\begin{figure}
\includegraphics[width=0.75\textwidth]{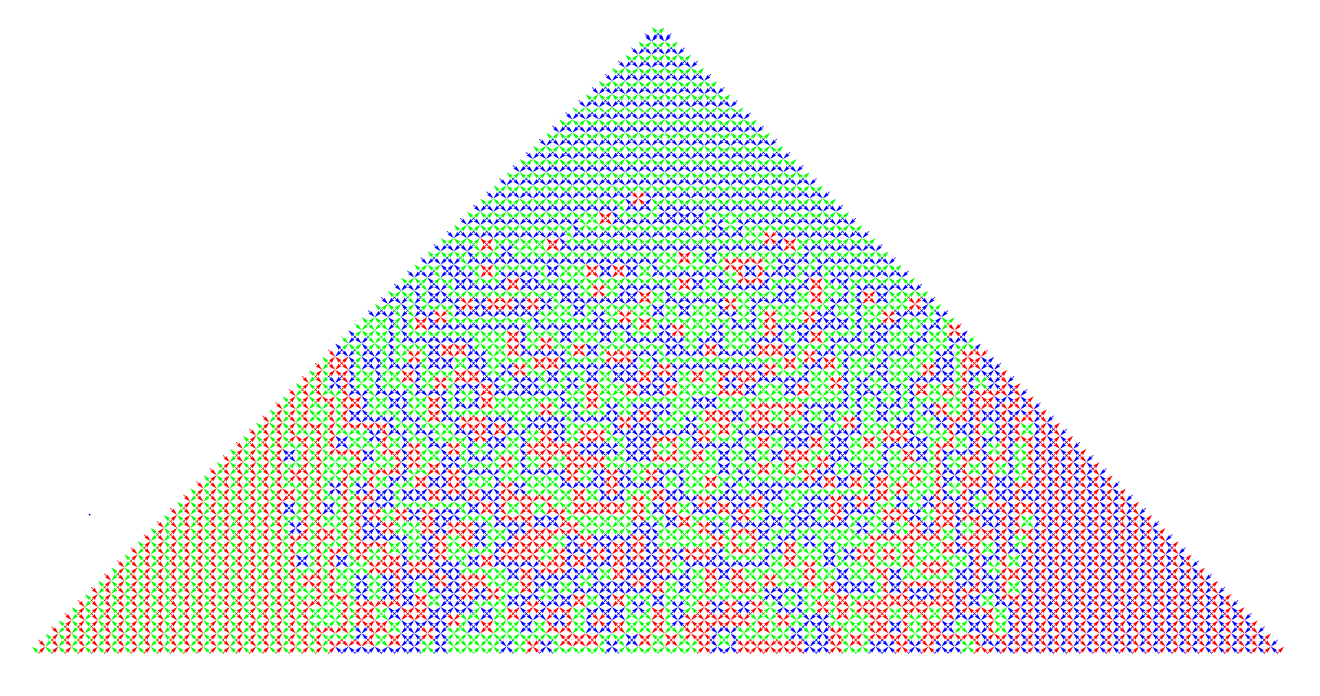}\caption{Illustration of the limit shape of a graph homomorphism to a $3$-regular tree. The boundary travels on three different geodesics.} \label{f_triangle}
\end{figure}

\subsection*{Overview over the article}
In Section \ref{s_main_result}, we outline the precise setting, introduce relevant definitions and state the main result of this article: the variational principle for tree-valued graph homomorphisms (see Theorem \ref{p_main_result_variational_principle}). In Section \ref{s_local_surface_tension} we discuss the local surface tension~$\ent(s)$ associated to the slope~$s$. We show the existence of~$\ent(s)$ in Theorem~\ref{p_existence_local_surface_tension} and the convexity of~$\ent(s)$ in Theorem~\ref{p_convexity_local_surface_tension}. Section~\ref{s_technical_observations} is dedicated to proving technical lemmas which are central to our main results. Namely, in Theorem \ref{Kirszbraun} we prove the Kirszbraun theorem and in Theorem~\ref{main_concentration_section_3} we deduce the concentration inequality. This concentration inequality is used in Section~\ref{s_existence_gradient_gibbs} to prove the existence of an infinite volume gradient Gibbs measure for each possible slope (see Theorem~\ref{ergodic}). Finally, in Section~\ref{s_proof_variational_principle} we give the proof of the variational principle itself.


{\center \textsc{Notation}}

\begin{itemize}

\item $C$ and~$c$ denote generic positive bounded universal constants.
\item If~$A$ is finite then $|A|$ denotes the cardinality of the set~$A$. If~$A \subset \mathbb{R}^m$ then $|A|$ denotes the Lebesgue measure of the set~$A$.
\item $x,y,z$ denote elements~$x,y,z \in \mathbb{Z}^m$.
\item $S_n : = \left\{ 0, \ldots, n-1\right\}^m$.
\item $\vec{i}_k$ denotes the~$k$-th Euclidean basis vector.
\item $x \sim y$ indicates that the points $x$ and $y$ are neighbors.
\item $e_{xy}$ is the oriented edge from $x$ to~$y$.
\item $d_G$ distance in the graph $G$.

\item $\mathcal{T}$ denotes a $d$-regular tree.
\item $w,v$ denote elements ~$w,v \in \mathcal{T}$.
\item $\mathbf{r} \in \mathcal{T}$ denotes the root of the tree~$\mathcal{T}$.
\item $\mathbf{g} \subset \mathcal{T}$ denotes a geodesic of the tree~$\mathcal{T}$.
\item $\Pi_{\mathbf{g}}: \mathcal{T} \to \mathbf{g}$ is the projection on the geodesic~$\mathbf{g}$.
\item $\infty_{\mathbf{g}}$ boundary point of the directed geodesic~$\mathbf{g}\subset \mathcal{T}$.
\item $\partial \mathcal{T}:= \left\{ \infty_{\mathbf{g}} : \mathbf{g} \mbox{ is a geodesic of } \mathcal{T} \right\}$.
\item $\theta (\varepsilon)$ denotes a generic smooth function with~$\lim_{\varepsilon \to 0} \theta (\varepsilon) =0$.
\item $\alpha_i$ denotes colors of edges.
\item $(\alpha_1, \ldots , \alpha_d)$ denotes the generating set of a~$d$-regular graph.
\item $h: \mathbb{Z}^m \to \mathcal{T}$ is a graph homomorphism.
\item $\mathcal{H}_{n}^{g,free } (s)$ is the set of all graph homomorphisms $h: \mathbb{Z}^m \to \mathcal{T}$ that are~$n-$invariant with slope~$s$ and supported on the geodesic~$\mathbf{g}$. 
\item $\mathcal{H}_{n}^{g} (s)$ is the set of all graph homomorphisms $h: \mathbb{Z}^m \to \mathcal{T}$ that are~$n-$invariant with slope~$s$ and supported on the geodesic~$\mathbf{g}$ such that $\Pi_{\mathbf{g}}(h(0))$ is pinned at a fixed point. 
\item $\ent_{n}(s) =- \frac{1}{n^2} \ln |\mathcal{H}_{n}^{g} (s)|.$
\item $\ent^{\tt per}(s)= \lim_{n \to \infty} \ent_n(s).$
\item Since $\Z^m$ and $\tree$ are both bipartite let us fix a $2$-coloring of the two graphs. The color of a vertex is called parity.
 \item $u \cdot v $ denotes the usual inner product of $\R^m$.
\end{itemize}


\section{The variational principle for graph homomorphisms}\label{s_main_result}

Let us start with clarifying the underlying model. We closely follow the setting of~\cite{CKP01} with the main difference that we allow the underlying lattice to be~$m$ dimensional and that we consider graph homomorphism to a regular tree. We equip the~$m$-dimensional lattice~$\mathbb{Z}^m$ and~$\mathbb{R}^m$ with the~$\ell_1$-norm.

\begin{assumption}\label{a_convergence_of_regions}
For each~$n \in \mathbb{N}$ we consider a bounded lattice region~$R_n \subset \mathbb{Z}^m$ (i.e.~a connected region composed of squares from the unit square lattice). We assume that for~$n \to \infty$ the scaled sub-lattice~$\frac{1}{n} R_n$ converges in the Hausdorff distance to a bounded, simply connected, Lipschitz domain~$R \subset \mathbb{R}^m$.
\end{assumption}
The basic objective is to study graph homomorphisms~$ h: R_n \to \mathcal{T}$, where~$\mathcal{T}$ denotes a $d-$regular tree.
\begin{definition}(Graph homomorphism, height function)
 Let~$\mathcal{T}$ denote the d-regular rooted tree and let~$\Lambda \subset \mathbb{Z}^m$ be a finite set. We denote with~$d_{\mathcal{G}}$ the natural graph distance on a graph~$\mathcal{G}$. A function~$h: \Lambda \to \mathcal{T}$ is called graph homomorphism, if 
 \begin{align*}
 d_{\mathcal{T}}(h(x), h(y))=1 
 \end{align*}
for all~$k, l \in \Lambda$ with~$|x-y|_{\ell_1}=1$. In analogy to~\cite{CKP01}, we may also call~$h$ a $\mathcal{T}$-valued height function. Let~$\partial \Lambda$ denote the inner boundary of~$\Lambda \subset \mathbb{Z}^m$ i.e.
\begin{align}
 \partial \Lambda = \left\{ x \in \Lambda \ | \ \exists y \notin \Lambda : |x-y|_{\ell_1}=1 \right\}.
\end{align}
 We call a homomorphism~$h: \partial \Lambda \to \mathcal{T}$ boundary graph homomorphism or boundary height function.
\end{definition}

We want to study the question of how many~$\mathcal{T}-$valued height functions exist that extend a fixed prescribed boundary height function~$h_{\partial R_n}: \partial R_n \to \mathcal{T}$. Hence, let us consider the set~$M(R_n, h_{\partial R_n})$ that is defined as
\begin{align}\label{e_d_micro_partition_function}
 \begin{split}
M(R_n, h_{\partial R_n}) = \left\{ h: R_n \to \mathcal{T} \ | \ h \mbox{ is graph homomorphism,} \right. \\
 \qquad \qquad \left. \mbox{ and } h(\sigma)= h_{\partial R_n}(\sigma) \quad \forall \sigma \in \partial R_n \right\}. 
 \end{split}
\end{align}
The goal of the article is to derive an asymptotic formula as~$n \to \infty$ of the \emph{microscopic entropy}
\begin{equation}
\label{e_d_microscopic_entropy}
\Ent\left( R_n, h_{\partial R_n} \right) := - \frac{1}{|R_n|} \ln |M(R_n, h_{\partial R_n})|.
\end{equation}
For this purpose, let us introduce the notion of an asymptotic height profile and the notion of an asymptotic boundary height profile. Those two objects will serve as the possible limits of sequences of graph homomorphisms~$h_{R_n}: R_n \to \mathcal{T}$ and boundary graph homomorphisms~$h_{\partial R_n} : \partial R_n \to \mathcal{T}$.

\begin{definition}[Asymptotic height profile]\label{d_asymp_height_profile}
Let $k\in \N$, let $f_R: R \to \mathbb{R}^+ \times \{1,..,k\}$ be a function and let $(a_{i,j})_{k \times k}$ be a set of non-negative real numbers satisfying the following compatibility conditions 
\begin{align}\label{e_a_ij_symmetry}
 a_{i,j} = a_{j,i} \quad \mbox{and} \quad a_{i,i}=\infty
\end{align}
and (cf.~Figure~\ref{f_geodesic_meeting_points}) 
\begin{align}\label{e_geodesics_compatibility}
 a_{i,j} < a_{i,k} \Rightarrow a_{j,k}= a_{i,j}
\end{align}
for all~$i,j \in \left\{ 1, \ldots, k \right\}$. We say that~$(f_R, (a_{i,j})_{k \times k} )$ is an asymptotic height profile if:
\begin{itemize}
\item The first coordinate of the map~$f_{R}$ is 1-Lipschitz with respect to the~$\ell_1$-norm in $R$, i.e.~for all~$x,y \in R$: 
\begin{align}
\left| f_{R}^1 (x) - f_{R}^1 (y) \right| \leq |x-y|_{\ell_1}. \label{e_asy_hf_lipschitz}
\end{align}
\item The map~$f_{R}$ is $(a_{i,j})_{k \times k}$-admissible in the sense that for all $i\neq j$:
\begin{align}\label{e_asyhf_admissible}
 \overline{f_R^{-1}(\mathbb{R}^+,i)} \cap \overline{f_R^{-1}(\mathbb{R}^+,j)} \subset \overline{ f_R^{-1}( [0,a_{i,j}] ,\{1,..,k \})}.
\end{align} 
\end{itemize}
\end{definition}

\begin{figure}
\includegraphics[width=0.5\textwidth]{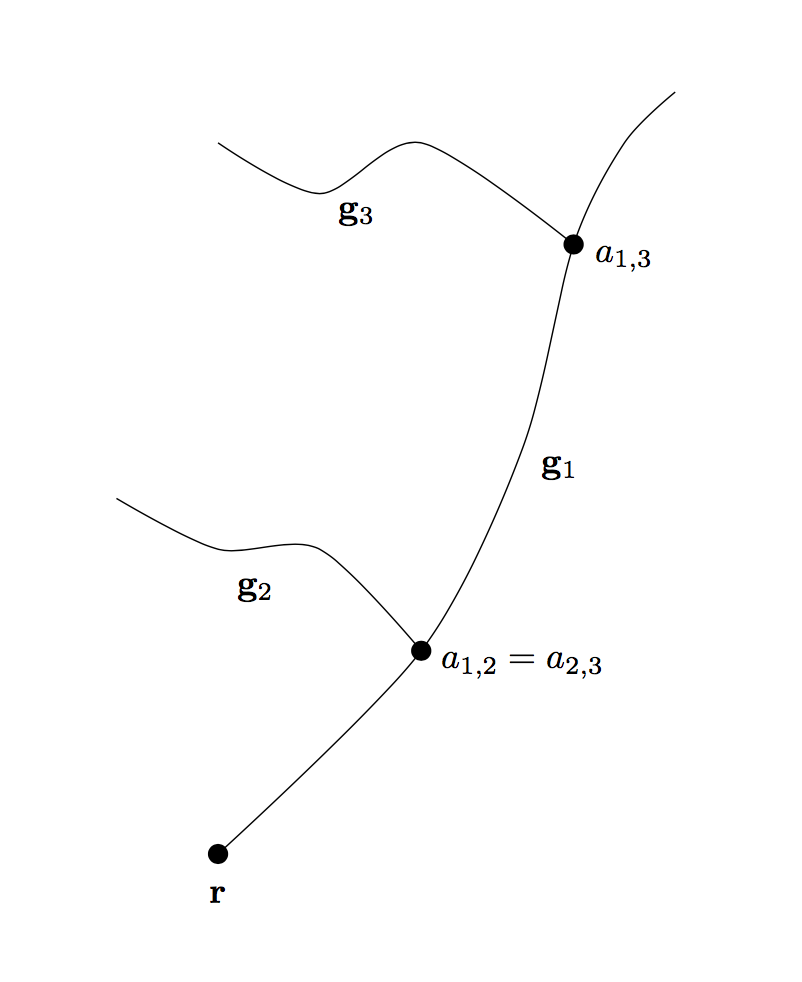}\caption{Illustration of the compatibility condition~\eqref{e_geodesics_compatibility}}\label{f_geodesic_meeting_points}
\end{figure}

Compared to a classical asymptotic height function~$f_R: R \to \mathbb{R}$ (see for example~\cite{CKP01}) our notion of an asymptotic height profile~$(f_R,(a_{i,j})_{k \times k})$ has two coordinates~$f_R^1$ and~$f_R^2$. Opposed to~$\mathbb{R}$, there are infinitely many ways to travel from zero to infinity on a tree~$\mathcal{T}$. Those pathways to infinity are described by directed geodesics~$\mathbf{g}$ starting at the root~$\mathbf{r}$.

\begin{definition}(One- and two-sided geodesics on the tree~$\mathcal{T}$)\label{d_geodesic}
 Let~$\mathcal{T}$ denote the~$d$-regular tree with root~$\mathbf{r}$. A graph homomorphism~$\mathbf{g}: \mathbb{N}\to \mathcal{T}$ is called one-sided (or directed) geodesic if the map~$\mathbf{g}$ is one-to-one. Two one-sided geodesics~$\mathbf{g}_1$ and~$\mathbf{g}_2$ are asymptotic if there are~$n_1,n_2 \in \mathbb{N}$ such that
 \begin{align*}
 \mathbf{g}_1(n_1+k)=\mathbf{g}_2(n_2+k) \qquad \mbox{for all~$k \in \mathbb{N}$.}
 \end{align*}
We say that a one-sided geodesic~$\mathbf{g}$ starts in~$\mathbf{g}(0)$. A graph homomorphism~$\mathbf{g}: \mathbb{Z}\to \mathcal{T}$ is called two-sided geodesic if the map~$\mathbf{g}$ is one-to-one.
 
\end{definition}
\begin{convention}\label{e_d_point_on_geodesic}
If~$\mathbf{g}$ is a two-sided geodesic on the graph~$\mathcal{T}$ containing the root~$\mathbf{r} \in \mathcal{T}$, we identify~$\mathbf g$ with a one-to-one graph homomorphism~$\mathbf{g}: \mathbb{Z} \to \tree$ such that~$\mathbf{g}(0) = \mathbf{r}$.
\end{convention}

We assume that the asymptotic boundary height profile will travel only on finitely many one-sided geodesics~$\mathbf{g}_i$ that are indexed by~$1, \ldots, k$ and start in the root~$\mathbf{r}$. This requirement is natural since graph homomorphisms are $1$-Lipschitz. Hence the limiting profile on the boundary of $R$ must have finite total variation.\\ 

The second coordinate~$f_R^2(x)=i$ of an asymptotic height profile~$f$ indicates on which one-sided geodesic~$\mathbf{g}_i$ the point~$x\in R$ is mapped to. More precisely, the point~$x \in R$ will be mapped onto a point on the one-sided geodesic~$\mathbf{g}_{f_R^2(x)}$. The first coordinate~$f_R^1(x)$ of the asymptotic boundary profile~$f_R$ specifies the exact location on the one-sided geodesic~$\mathbf{g}_{f_R^2(x)}$. This means that~$x\in R$ will be mapped onto a point on the one-sided geodesic~$\mathbf{g}_{f_R^2(x)}$ that has distance~$f_R^1(x)$ from the root~$\mathbf{r}$. Working with one-sided geodesics allows to assume that~$f_R^1(x) \in \mathbb{R}^+$ is non negative. The Lipschitz condition~$\eqref{e_asy_hf_lipschitz}$ on~$f_R^1$ is very natural and follows from the fact that graph homomorphisms are~$1-$Lipschitz. This is very similar to the setting of classical height functions (see~\cite{CKP01}).\\

Let us now describe the meaning of the numbers~$a_{i,j}$ appearing in the compatibility condition~\eqref{e_geodesics_compatibility} and the condition~\eqref{e_asyhf_admissible}. The numbers~$a_{i,j}$ have their origin in the following observation. Any two one-sided geodesics $\mathbf{g}_1\subset \mathcal{T}$ and~$\mathbf{g}_2 \subset \mathcal{T}$ starting at~$\mathbf{r} \in \mathcal{T}$ have a nonzero intersection~$\mathbf{g}_1 \cap \mathbf{g}_2 \neq \emptyset$. However, they must split up at some vertex~$v_{1,2} \in \mathcal{T}$ (see also discussion below and Figure~\ref{f_scaling_graph_homo}). If seen from the root~$\mathbf{r}$, the vertex~$v_{1,2} \in \mathcal{T}$ can be interpreted as the splitting point of the one-sided geodesics~$\mathbf{g}_1$ and~$\mathbf{g}_2$. If seen from infinity, the vertex~$v_{1,2} \in \mathcal{T}$ can be interpreted as the meeting point of the one-sided geodesics~$\mathbf{g}_1$ and~$\mathbf{g}_2$. The number~$a_{1,2}$ denotes the asymptotic height of this meeting point (see also~\eqref{e_asymptotic_characterization_meeting_point} in Definition~\ref{d_convergence_boundary_height_function}). When traveling on a geodesic from infinity, it is only possible to change to the other geodesic by passing through the meeting point~$v_{1,2}$. The admissibility condition~\eqref{e_asyhf_admissible} enforces that the asymptotic height profile has a similar property. Using this interpretation of the number~$a_{i,j}$ it also becomes clear why the compatibility condition~\eqref{e_geodesics_compatibility} is needed (see also Figure~\ref{f_geodesic_meeting_points}). This interpretation is made precise in the following lemma.
\begin{lemma}\label{p_equivalence_path_property}
We consider a map~$f_R: R \to \mathbb{R}^+ \times \left\{ 1, \ldots k \right\}$. We assume that the first coordinate~$f^1$ is continuous and that the numbers~$a_{i,j}$ satisfy the conditions~\eqref{e_a_ij_symmetry} and~\eqref{e_geodesics_compatibility}. Then it is equivalent:
\begin{itemize}
\item The map~$f$ satisfies the condition~\eqref{e_asyhf_admissible}.
\item For any two points~$x,y\in R$ and any path~$\mathbf{p} \subset R$ that connects~$x$ and~$y$ there is a point~$z \in \mathbf{p}$ such that
\begin{align}\label{e_path_property}
f_R^1(z) \leq a_{f^2(x),f^2(y)}.
\end{align}
\end{itemize}
\end{lemma}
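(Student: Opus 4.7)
The plan is to prove the two implications separately.

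For $\eqref{e_path_property}\Rightarrow\eqref{e_asyhf_admissible}$, I would take any $x\in\overline{h_R^{-1}(\mathbb{R}^+,i)}\cap\overline{h_R^{-1}(\mathbb{R}^+,j)}$ with $i\neq j$ and approximate it from inside each region: pick sequences $x_n\in h_R^{-1}(\mathbb{R}^+,i)$ and $y_n\in h_R^{-1}(\mathbb{R}^+,j)$ with $x_n,y_n\to x$. Since $R$ has Lipschitz boundary it is locally path-connected, so I can join $x_n$ and $y_n$ by a continuous path $\mathbf{p}_n\subset R$ whose diameter tends to zero. Applying \eqref{e_path_property} to $\mathbf{p}_n$ produces $z_n\in\mathbf{p}_n$ with $h_R^1(z_n)\le a_{h_R^2(x_n),h_R^2(y_n)}=a_{ij}$. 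Since the diameters vanish, $z_n\to x$, and continuity of $h_R^1$ forces $h_R^1(x)\le a_{ij}$; this places $x$ in $\overline{h_R^{-1}([0,a_{ij}],\{1,\ldots,k\})}$, which is exactly \eqref{e_asyhf_admissible}.

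For the converse, fix a continuous path $\mathbf{p}\colon[0,1]\to R$ from $x$ to $y$ and write $A_l:=\overline{h_R^{-1}(\mathbb{R}^+,l)}$. The image $\mathbf{p}([0,1])$ is compact and is covered by the finitely many closed sets $A_1,\ldots,A_k$, so the Lebesgue number lemma lets me partition $[0,1]$ into finitely many sub-intervals $[t_r,t_{r+1}]$ with $\mathbf{p}([t_r,t_{r+1}])\subset A_{l_r}$ for some $l_r\in\{1,\ldots,k\}$, and I can arrange $l_0=i$ and $l_{N-1}=j$. Collapsing consecutive duplicates yields a chain of distinct successive indices $i=m_0,m_1,\ldots,m_M=j$, and at each boundary time between two such indices the path visits a point of $A_{m_{s-1}}\cap A_{m_s}$; invoking \eqref{e_asyhf_admissible} together with the continuity of $h_R^1$ therefore produces a point $z_s\in\mathbf{p}$ with $h_R^1(z_s)\le a_{m_{s-1},m_s}$.

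The remaining — and genuinely delicate — step is to extract from the chain $(m_s)$ at least one index $s$ with $a_{m_{s-1},m_s}\le a_{i,j}$, for then $z:=z_s$ completes the proof. This is where \eqref{e_geodesics_compatibility} is used: it endows the family $(a_{ij})$ with an ultrametric-type triangle structure that mirrors the meeting-point geometry of directed geodesics in the tree $\mathcal{T}$. Unpacking this structure inductively along the chain should yield the bound $\min_s a_{m_{s-1},m_s}\le a_{i,j}$, closing the argument. I expect this combinatorial lemma about the $a_{ij}$'s to be the main obstacle; the topological ingredients — the Lebesgue number lemma, local path-connectedness of $R$, and continuity of $h_R^1$ — are routine by comparison.
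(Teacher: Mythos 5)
Your forward implication (path property $\Rightarrow$ admissibility) is essentially the paper's argument: approximate $x$ from inside each region, join the approximants by short paths, apply the path property, and pass to the limit using continuity of $h_R^1$. The paper uses straight segments where you invoke local path-connectedness from the Lipschitz boundary; both are fine.

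For the converse, your topological reduction (Lebesgue number lemma, chain of indices $i=m_0,\dots,m_M=j$, a point of $A_{m_{s-1}}\cap A_{m_s}$ at each transition, admissibility giving $h_R^1(z_s)\le a_{m_{s-1},m_s}$) is correct and matches the paper's informal version. But you then stop at exactly the step that carries all the weight: you \emph{state} that some $s$ should satisfy $a_{m_{s-1},m_s}\le a_{i,j}$, calling it an ``ultrametric-type'' consequence of \eqref{e_geodesics_compatibility} that ``should yield'' the bound. That is not a proof, and it is the only place \eqref{e_geodesics_compatibility} is actually used, so the lemma is not done. The paper closes this by contradiction: suppose $a_{i,j}<a_{m_{s-1},m_s}$ for every $s$. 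In particular $a_{m_0,m_M}<a_{m_0,m_1}$, so \eqref{e_geodesics_compatibility} applied to the triple $(m_0,m_M,m_1)$ forces $a_{m_1,m_M}=a_{m_0,m_M}$; then $a_{m_1,m_M}<a_{m_1,m_2}$ and the same condition gives $a_{m_2,m_M}=a_{m_1,m_M}$, and so on inductively until $a_{m_{M-1},m_M}=a_{m_0,m_M}=a_{i,j}$, contradicting the assumed strict inequality at $s=M$. You need to supply an argument of this kind (or an equivalent one) to complete the proof; identifying the lemma and attributing it to an unspecified ultrametric heuristic is not sufficient.
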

The statement of Lemma~\ref{p_equivalence_path_property} is not used later in the article. The proof is straight-forward and is therefore omitted in the article. Let us consider an example and assume that there are points~$x,y\in R$ such that the second coordinate~$f_R^2(x)=1$ and~$f_R^2(y)=2$. This indicates that the asymptotic height function~$f_R^1$ travels at~$x$ on the one-sided geodesic~$\mathbf{g}_1$ and at~$y$ on the one-sided geodesic~$\mathbf{g}_2$. Now, let us consider a path~$\mathbf{p}\subset R$ from~$x$ to~$y$. Then the asymptotic height function has to change geodesics on that path. The admissibility condition~\eqref{e_asyhf_admissible} enforces that changing from the one-sided geodesic~$\mathbf{g}_1$ to the one-sided geodesic~$\mathbf{g}_2$ can only take place below the meeting point, which is characterized by the height~$a_{1,2}$. We also want to note that if~$f_R^1(z)\leq a_{1,2}$ then the second coordinate~$f_R^2(z)$ is allowed to change freely between~$1$ and~$2$ with no regularity condition. Below the height~$a_{1,2}$ the geodesics~$\mathbf{g}_1$ and~$\mathbf{g}_2$ are indistinguishable.\\

\begin{figure}



 \includegraphics[width=0.4\textwidth]{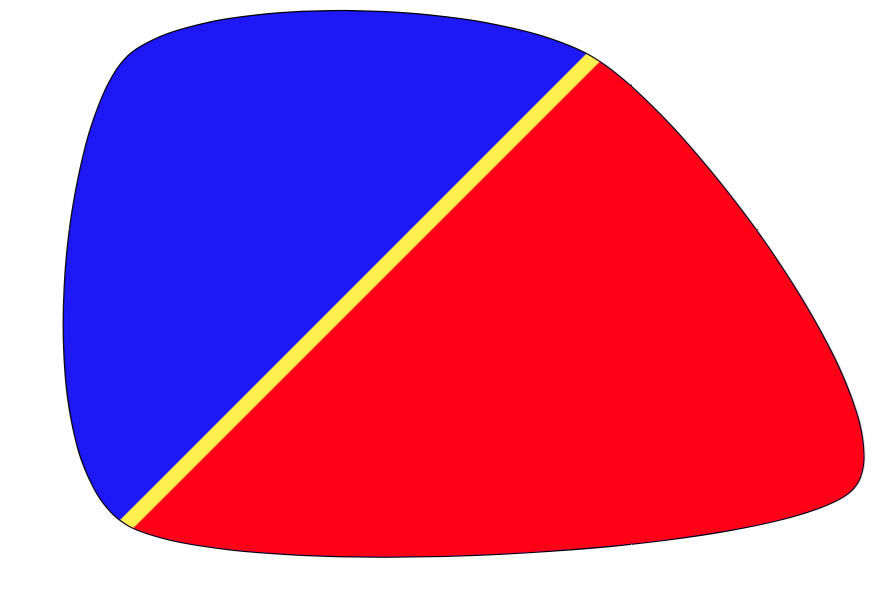}
\caption{Illustration of an asymptotic height profile~$(f_R, a_{1,2})$.}\label{f_AHP}
\end{figure}

For an illustration of an asymptotic height profile~$(f_R, a_{1,2})$ we refer to Figure~\ref{f_AHP}. On the blue region $R_{\tt{blue}} \subset R$, the height profile travels on the one-sided geodesic~$\mathbf{g}_1$. On the red region $R_{\tt{red}} \subset R$, the height profile travels on the one-sided geodesic~$\mathbf{g}_2$. Mathematically, this means that the second coordinate of~$f_R$ satisfies
\begin{align}
f_R^2(x) = 
\begin{cases}
1 , & \mbox{ if } x \in R_{\tt{blue}}, \\
2 , & \mbox{ if } x \in R_{\tt{red}}.
\end{cases} 
\end{align}
 
The yellow line~$L_{\tt{yellow}}$ separates the blue region~$R_{\tt{blue}}$ and the red region~$R_{\tt{red}}$. The admissibility condition~\eqref{e_asyhf_admissible} means that one can only cross from $R_{\tt{blue}}$ to $R_{\tt{red}}$ below the meeting point of~$\mathbf{g}_1$ and~$\mathbf{g}_2$. Hence, the first coordinate of~$f_R$ satisfies for all~$x \in L_{\tt{yellow}}$
 \begin{align}
f_R^1(x) \leq a_{1,2} .
\end{align}\mbox{}\\[-1ex]

In a variational principle only the boundary condition is prescribed. For that reason, we now adapt Definition~\ref{d_asymp_height_profile} and define the notion of an asymptotic boundary height profile.

\begin{definition}[Asymptotic boundary height profile]\label{d_asymptotic_boundary_height_profile}
Let $k \in \mathbb{N}$, let $f_{\partial R}: \partial R \to \mathbb{R}^+ \times \{1,..,k\}$ be a function and let $(a_{i,j})_{k \times k}$ be a set of non-negative real numbers satisfying the condition~\eqref{e_a_ij_symmetry} and~\eqref{e_geodesics_compatibility}. We say that~$(f_{\partial R}, (a_{i,j})_{k \times k} )$ is an asymptotic boundary height profile if it satisfies the conditions~\eqref{e_asy_hf_lipschitz} and~\eqref{e_asyhf_admissible} from above and for all $x,y \in \partial R$ it holds
\begin{align}\label{e_extenability_asym_bhf}
|f_{\partial R}^1 (x)-a_{f_{\partial R}^2(x),f_{\partial R}^2(y)}|_{\ell_1}+|a_{f_{\partial R}^2(x),f_{\partial R}^2(y)}-f_{\partial R}^1(y)|_{\ell_1} \leq |x-y|_{\ell_1}.
\end{align}
\end{definition}
Compared to the Definition~\ref{d_asymp_height_profile} of an asymptotic height profile, the condition~\eqref{e_extenability_asym_bhf} is new. It is needed to guarantee that every asymptotic boundary height profile~$f_{\partial R}$ can be extended to an asymptotic boundary height function.
\begin{lemma}\label{p_extension_of_bhf}
Let~$(f_{\partial R},(a_{i,j})_{k \times k})$ be an asymptotic boundary height function in the sense of Definition~\ref{d_asymptotic_boundary_height_profile}. Then it can be extended to an asymptotic height profile~$(f_{\partial R},(a_{i,j})_{k \times k})$ on the full region~$R$.
\end{lemma}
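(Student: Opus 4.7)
The plan is to view the boundary data as a 1-Lipschitz map into a finite $\mathbb{R}$-tree and extend it by invoking the fact that $\mathbb{R}$-trees are injective (hyperconvex) metric spaces. First, form the target tree $T$ by gluing $k$ copies of the half-line $[0,\infty)$, indexed by $i=1,\ldots,k$, along the identifications $(t,i)\sim(t,j)$ for $0\le t\le a_{ij}$. Conditions~\eqref{e_a_ij_symmetry} and~\eqref{e_geodesics_compatibility} are exactly what makes this gluing consistent and produce a complete $\mathbb{R}$-tree $(T,d_T)$ with root $\mathbf{r}=(0,\cdot)$ and branches $\mathbf{g}_1,\ldots,\mathbf{g}_k$ given by the $k$ embedded half-lines.

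Next, I reinterpret the boundary data as $H_{\partial R}(x):=(h^1_{\partial R}(x),h^2_{\partial R}(x))\in T$. A short case analysis yields the tree-distance bound
\[
  d_T\bigl((s,i),(t,j)\bigr) \;\leq\; |s-a_{ij}|+|a_{ij}-t| \qquad (s,t\ge 0),
\]
with equality when $s,t\geq a_{ij}$, so the extendability condition~\eqref{e_extenability_asym_bhf} says exactly that $H_{\partial R}$ is 1-Lipschitz from $(\partial R,|\cdot|_{l_1})$ into $(T,d_T)$. Because a complete $\mathbb{R}$-tree is an injective metric space, any 1-Lipschitz map from a subset of an arbitrary metric space into $T$ admits a 1-Lipschitz extension to the whole space (the Kirszbraun-type theorem for $\mathbb{R}$-tree targets due to Isbell). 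I would therefore extend $H_{\partial R}$ to a 1-Lipschitz $H_R:R\to T$ and set
\[
  h^1_R(x):=d_T(\mathbf{r},H_R(x)), \qquad h^2_R(x):=\text{an index } i \text{ with } H_R(x)\in\mathbf{g}_i,
\]
the choice of $i$ being free on shared initial segments (where it plays no role) and unique above all splits; the definitions agree with $h_{\partial R}$ on $\partial R$ since $H_R|_{\partial R}=H_{\partial R}$.

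The two requirements of Definition~\ref{d_asymp_height_profile} then follow quickly. The 1-Lipschitz bound \eqref{e_asy_hf_lipschitz} is just the composition of the 1-Lipschitz $H_R$ with the 1-Lipschitz root distance on $T$. For the admissibility \eqref{e_asyhf_admissible}, a point $x\in \overline{h_R^{-1}(\mathbb{R}^+,i)}\cap\overline{h_R^{-1}(\mathbb{R}^+,j)}$ with $i\neq j$ is, by continuity of $H_R$, a simultaneous limit of points whose images lie on the exclusive portions of $\mathbf{g}_i$ and of $\mathbf{g}_j$; the tree structure of $T$ forces $H_R(x)$ onto the common initial segment of $\mathbf{g}_i$ and $\mathbf{g}_j$, which has height $\le a_{ij}$, yielding $h^1_R(x)\le a_{ij}$.

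The main obstacle I expect is invoking the correct Kirszbraun-type theorem: the source $(R,|\cdot|_{l_1})$ is not Euclidean, so the standard CAT(0)-to-CAT(0) version of Kirszbraun does not apply directly; the injectivity formulation for $\mathbb{R}$-tree targets bypasses any curvature assumption on the source and is exactly what is needed. A secondary subtlety is verifying that the gluing defines a genuine $\mathbb{R}$-tree rather than merely a topological graph, which is where the compatibility condition~\eqref{e_geodesics_compatibility} enters essentially.
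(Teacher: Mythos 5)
Your proof is correct and takes a genuinely different route from the paper's. The paper constructs the extension explicitly by a McShane-type formula
\begin{align}
g_1(y) &= \max \Bigl\{0, \max_{x\in \partial R}\bigl\{h_1(x)-|x-y|_{l^1}\bigr\}\Bigr\}, \\
g_2(y) &= h_2\Bigl( \argmax_{x\in \partial R}\bigl\{h_1(x)- |x-y|_{l^1}\bigr\}\Bigr),
\end{align}
then checks directly that $g=h$ on $\partial R$, that $g_1$ satisfies the Lipschitz bound~\eqref{e_asy_hf_lipschitz}, and (by contradiction, invoking~\eqref{e_extenability_asym_bhf}) that $g$ satisfies the admissibility condition~\eqref{e_asyhf_admissible}. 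You instead reinterpret the boundary data as a single $1$-Lipschitz map into a glued $\mathbb R$-tree $T$ and invoke the hyperconvexity/injectivity of complete $\mathbb{R}$-trees; this is a conceptual improvement, since it makes transparent that the extendability condition~\eqref{e_extenability_asym_bhf} is exactly the statement that $H_{\partial R}$ is $1$-Lipschitz into $T$, and that the admissibility condition~\eqref{e_asyhf_admissible} for the extension is a free consequence of landing in a tree. The paper's argument is more elementary (no appeal to Isbell's theorem or $\mathbb R$-tree theory), but your abstract route better explains \emph{why} the compatibility conditions in Definitions~\ref{d_asymp_height_profile} and~\ref{d_asymptotic_boundary_height_profile} have the shape they do. One phrasing nit in your last paragraph: you do not need the approximating points to lie on the \emph{exclusive} portions of $\mathbf{g}_i$ and $\mathbf{g}_j$; it suffices that $H_R(x_n) \in \mathbf g_i$ and $H_R(y_n) \in \mathbf g_j$, whence by continuity $H_R(x) \in \mathbf g_i \cap \mathbf g_j$, which is precisely the initial segment of depth $\le a_{ij}$, giving $h^1_R(x) \le a_{ij}$. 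Also worth noting: the explicit formula the paper writes down is a particular choice of the tight-span extension your argument guarantees abstractly (the "maximal'' $1$-Lipschitz extension in depth), so the two proofs produce compatible objects.
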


The proof of Lemma~\ref{p_extension_of_bhf} is stated in Section~\ref{s_proof_variational_principle}. Lemma~\ref{p_extension_of_bhf} is important because otherwise the statement of the variational principle, formulated in Theorem~\ref{p_main_result_variational_principle} below, could be empty.\\

The next step towards the variational principle is to define in which sense a sequence of (boundary) graph homomorphisms~$f_{\partial R_n}: \partial R_n \to \mathcal{T}$ converges to an asymptotic height profile~$\left( f_{\partial R}, (a_{i,j})_{k \times k}\right)$. For this purpose let us introduce some necessary definitions.

\begin{definition}(Boundary points~$\infty_{\mathbf{g}}$ (cf.~\cite{Kloe08}))\label{d_boundary_point}
A boundary point~$\infty$ is an equivalence class of asymptotic one-sided geodesics. We denote with $\infty_{\mathbf{g}}$ the boundary point associated to an one-sided geodesic~$\mathbf{g}$ i.e.~$\mathbf{g} \in \infty_{\mathbf{g}}$. We denote with~$\partial \mathcal{T}$ the set of all boundary points. We observe that each boundary point~$\infty$ has a canonical representative~$\mathbf{g} \in \infty$ such that~$\mathbf{g}$ is a one-sided geodesic starting at the root~$\mathbf{r}$ i.e.~$\mathbf{g}(0)=\mathbf{r}$. Because a two-sided geodesic~$\mathbf{g}$ is the union of two one-sided geodesics~$\mathbf{g}_1$ and~$\mathbf{g}_2$, the two-sided geodesic~$\mathbf{g}$ has two boundary points which we denote with~$\infty_{\mathbf{g}} = \infty_{\mathbf{g}_1}$ and~$-\infty_{\mathbf{g}} = \infty_{\mathbf{g}_2}$. 
\end{definition}

It follows from the definition that for two boundary points~$\infty_{\mathbf{g}_1}$,$\infty_{\mathbf{g}_2} \in \partial \mathcal{T}$ there is a unique element~$v_{1,2} \in \mathcal{T}$ such that (cf.~Figure~\ref{f_scaling_graph_homo})
\begin{align}\label{e_def_joining_geodesics}
\max_{v \in \mathbf{g}_1 \cap \mathbf{g}_2}d_{\mathcal{T}}(\mathbf{r},v) = d_{\mathcal{T}} (\mathbf{r}, v_{1,2}),
\end{align}
where~$\mathbf{g}_1$ and~$\mathbf{g}_2$ are the canonical representatives of~$\infty_{\mathbf{g}_1}$ and~$\infty_{\mathbf{g}_2}$ (cf.~Definition~\ref{d_boundary_point}). We will write 
\begin{align}\label{e_d_depth_boundary_points}
|\infty_{g_1} \cap \infty_{g_2} | : = \max_{v \in \mathbf{g}_1 \cap \mathbf{g}_2}d_{\mathcal{T}}(\mathbf{r},v) = d_{\mathcal{T}} (\mathbf{r}, v_{1,2}),
\end{align}
and call~$|\infty_{g_1} \cap \infty_{g_2} |$ the height of the meeting point of the two one-sided geodesics~${g_1}$ and~$g_2$.\\

We are now ready to define the convergence of a sequence of boundary graph homomorphisms to an asymptotic boundary height profile.

\begin{definition}\label{d_convergence_boundary_height_function}
Let~$h_{\partial R_n}: \partial R_n \to \mathcal{T}$ be a sequence of boundary height functions and let~$\left( f_{\partial R}, (a_{i,j})_{k \times k}\right)$ be an asymptotic boundary height profile in the sense of Definition~\ref{d_asymptotic_boundary_height_profile}. We say that the sequence $h_{\partial R_n}$ converges to~$\left(f_{\partial R}, ( a_{i,j} )_{k \times k} \right)$ (i.e.~$\lim_{n \to \infty} h_{\partial R_n} = \left( f_{\partial R},( a_{i,j})_{k \times k} \right)$), if the following conditions are satisfied: 
\begin{itemize}
\item For all $(x,y) \in \partial R_n : d_{\mathcal{T}}(h_{\partial R_n}(x),h_{\partial R_n}(y)) \leq |x-y|_{\ell_1}$.
\item There exist $k$ sequences of boundary points $\{ \infty_{\mathbf{g}_{1,n}} ,..,\infty_{\mathbf{g}_{k,n}} \}_{n \in \mathbb{N}}$ such that for all $1\leq i,j \leq k$:
\begin{align}\label{e_asymptotic_characterization_meeting_point}
\lim_{n \rightarrow \infty} \frac{1}{n}|\infty_{\mathbf{g}_{i,n}} \cap \infty_{\mathbf{g}_{j,n}}|= a_{i,j}
\end{align}
where $|\infty_{\mathbf{g}_{i,n}} \cap \infty_{\mathbf{g}_{j,n}}|$ is given by~\eqref{e_d_depth_boundary_points}.
\item For $z \in \partial R_n$ we define the set 
\begin{align}
 S(z):= \partial R \cap \left\{ x \in \mathbb{R}^m : \left| x - \frac{z}{n} \right|_{\infty} \leq \frac{1}{2n} \right\}.
\end{align}
Then it holds that
\begin{align}\label{e_convergence_height_function}
 \lim_{n \to \infty}& \sup_{ \left\{ z \in \partial R_n: S(z) \neq \emptyset \right\}} \sup_{x \in S(z)} \frac{1}{n}d_{\mathcal{T}} \left( h_{\partial R_n} (z), \mathbf{g}_{f_{\partial R }^2 (x),n} \left( \lfloor n f_{\partial R }^1 (x) \rfloor \right) \right) \\
 &=0,
\end{align}
where $f^1_{\partial R }$ and $f^2_{\partial R }$ are the two components of the map~$f_{\partial R }$.
\end{itemize}
\end{definition}
Definition~\ref{d_convergence_boundary_height_function} is illustrated in Figure~\ref{f_scaling_graph_homo}. The condition~\eqref{e_asymptotic_characterization_meeting_point} ensures that the quantity~$a_{i,j}$ characterizes the asymptotic meeting point of the one-sided geodesics~$\mathbf{g}_i$ and~$\mathbf{g}_j$. One can observe that the compatibility condition~\eqref{e_geodesics_compatibility} on~$a_{i,j}$ is actually a consequence of the condition~\eqref{e_asymptotic_characterization_meeting_point}. The condition~\eqref{e_convergence_height_function} asymptotically characterizes the values of graph homomorphism~$h_{\partial R_n}$ via the asymptotic height profile.\\

Before we turn to the main result of this article let us introduce the local surface tension~$\ent(s)$ for~$s \in \left[-1,1 \right]^m$.
\begin{theorem}(The local surface tension.)\label{p_existence_lst_section_2}
 Let~$\mathbf{g}\subset \mathcal {T}$ be a two-sided geodesic satisfying~$\mathbf{g}(0) = \mathbf{r}$ and let~$S_n := \left\{ 0,1 \ldots, n-1 \right \}^m$. For~$s \in \mathbb{R}^m$ satisfying~$|s|_{\ell_1}<1$, let~$h_{\partial S_n}^s: \partial S_n \to \mathbf{g}$ be the boundary graph homomorphism given by~
 \begin{align*}
 h_{\partial S_n}^s(x) =
 \begin{cases}
 \mathbf{g} (\lfloor s \cdot x \rfloor) , & \mbox{if~$x$ and $\mathbf{g} (\lfloor s \cdot x \rfloor)$ have the same parity},\\
 \mathbf{g} (\lceil s \cdot x \rceil) , & \mbox{if~$x$ and $\mathbf{g} (\lceil s \cdot x \rceil)$ have the same parity}.
 \end{cases}
 \end{align*}
 Then the limit in the following equation exists and defines the local surface tension~$\ent(s)$:
 \begin{align}\label{d_section_2_local_surface_tension}
\ent(s) := \ent^{\tt fixed}(s):=\lim_{n \to \infty} \Ent \left( S_n, h_{\partial S_n}^s \right). 
 \end{align}
 If~$|s|_{\ell_1}=1$ we define~$\ent(s):=0$.
 \end{theorem}
This notion of surface tension is associated to fixed boundary condition. The proof of Theorem~\ref{p_existence_lst_section_2} is given in Section~\ref{s_micro_surface_tenstion}. There, we will also show the equivalence of the notion of local surface tension wrt.~different boundary conditions (cf.~\eqref{e_equivalence_surface_tensions}). Contrary to the case of random domino tilings, we do not have an explicit formula for the local surface tension~$\ent(s)$. In Section~\ref{s_micro_surface_tenstion} we also deduce the convexity of the local surface tension~$\ent (s)$. In analogy to random domino tilings, the authors believe that the local surface tension is strictly convex, but they are missing a proof.\\

Let us now formulate the main result of this article, namely the variational principle for graph homomorphisms to a regular tree. 
As we outlined in the introduction, a variational principle contains two statements. The first statement, namely Theorem~\ref{p_main_result_variational_principle}, gives a variational characterization of the entropy (cf.~\eqref{e_d_microscopic_entropy}) 
$$\Ent \left( R_n, h_{\partial R_n} \right)= - \frac{1}{|R_n|} \ln \left|M (R_n, h_{\partial R_n}) \right| .$$ Hence, it asymptotically characterizes the number of possible graph homomorphisms~$h_n \in M (R_n, h_{\partial R_n})$ with boundary data~$h_{\partial R_n}$.

\begin{theorem}[Variational principle]\label{p_main_result_variational_principle}
Under the Assumption~\ref{a_convergence_of_regions}, we assume that the boundary height functions~$h_{\partial R_n}$ converge to an asymptotic boundary height profile $(f_{\partial R},( a_{i,j})_{k \times k} )$ in the sense of Definition~\ref{d_convergence_boundary_height_function}. Let~$AHP(f_{\partial R}, (a_{i,j})_{k \times k})$ denote the set of asymptotic height profiles that extend~$(f_{\partial R}, (a_{i,j})_{k \times k})$ from~$\partial R$ to~$R$. Given a 1-Lipschitz function $g$ defined on $R$, we define the \emph{macroscopic entropy} via
\begin{align}\label{e_d_macroscopic_entropy}
 \E \left(g\right)= \int_{R} \ent \left( \nabla g (x) \right) dx,
\end{align}
where the local surface tension~$\ent(s)$ is given by Theorem~\ref{p_existence_lst_section_2}. Then it holds that
\begin{align}\label{e_main_result}
 \lim_{n \to \infty}\Ent \left( \Lambda_n, h_{\partial R_n}\right) = \min_{f_R\in AHP ( f_{\partial R, (a_{i,j})_{k \times k}} )} & \E \left(f^1_R \right) .
\end{align}

\end{theorem}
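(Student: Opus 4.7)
The plan is to establish both inequalities separately, and then to derive existence of the minimizer from the convexity of~$\ent$ established in Section~\ref{s_micro_surface_tenstion}. Convexity together with weak-$*$ compactness of $1$-Lipschitz maps $h_R^1 \colon R \to \mathbb{R}^+$ (Arzela-Ascoli) and the lower semi-continuity of $\int_R \ent(\nabla h_R^1)\, dx$ under uniform convergence yields a minimizer, the second coordinate $h_R^2$ only entering through the admissibility conditions~\eqref{e_asyhf_admissible}, which are closed under uniform convergence of $h_R^1$ along subsequences since the index set $\{1,\ldots,k\}$ is finite.

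For the upper bound $\limsup_n \Ent(R_n,h_{\partial R_n}) \leq \Ent(R,h_R)$ I would fix any $h_R \in AHP(h_{\partial R},(a_{ij})_{k\times k})$ together with a scale $\varepsilon>0$, and partition $R$ into cubes $Q_\alpha$ of side length $\varepsilon$. By Rademacher's theorem $h_R^1$ is differentiable a.e., so on all but a $\theta(\varepsilon)$-fraction of cubes $h_R^1$ is within $\theta(\varepsilon)$ of the affine function of slope $s_\alpha := \nabla h_R^1(x_\alpha)$ and $h_R^2$ is identically equal to some index $i_\alpha$. On the lattice points inside $n Q_\alpha$ the definition of the local surface tension in Theorem~\ref{p_existence_local_surface_tension} furnishes at least $\exp(-(n\varepsilon)^m \ent(s_\alpha)(1+\theta(\varepsilon)))$ graph-homomorphisms of slope $\approx s_\alpha$ supported on the geodesic~$\mathbf{g}_{i_\alpha}$. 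These local configurations will in general disagree across shared interfaces of adjacent cubes and with the prescribed boundary $h_{\partial R_n}$; here the discrete Kirszbraun theorem of Section~\ref{s_technical_observations} is applied inside a $\theta(\varepsilon)$-thickening of every interface to assemble a single tree-valued graph-homomorphism extending $h_{\partial R_n}$, at entropic surface cost of order $\theta(\varepsilon) n^m$. Multiplying the cubewise bounds and then sending $n\to\infty$ followed by $\varepsilon\to 0$ proves the upper bound.

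For the lower bound I would reverse the construction: given any configuration $h_n \in M(R_n,h_{\partial R_n})$, read off on the same $\varepsilon$-grid its coarse-grained slope $s_\alpha(h_n)\in \mathbb{R}^m$ on each cube and the index $i_\alpha(h_n)$ of the geodesic hosting the majority of its values, producing a combinatorial datum $(s_\alpha,i_\alpha)_\alpha$. The concentration inequality of Section~\ref{s_technical_observations}, applied through the depth function whose invariance under the modified Glauber dynamic is the crux of the approach, ensures that the number of configurations whose values in any cube deviate from those prescribed by $(s_\alpha,i_\alpha)_\alpha$ by more than $\theta(\varepsilon)n$ is super-exponentially small. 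Up to this negligible error, $|M(R_n,h_{\partial R_n})|$ decomposes as a sum over profile classes; the number of classes grows like $C(\varepsilon)^{\varepsilon^{-m}}$, sub-exponential in $n^m$, and by definition of $\ent$ each class contributes at most $\exp(-n^m\sum_\alpha|Q_\alpha|\ent(s_\alpha)(1-\theta(\varepsilon)))$ configurations, once more modulo Kirszbraun surface corrections. The admissibility conditions~\eqref{e_asy_hf_lipschitz}--\eqref{e_asyhf_admissible} together with Lemma~\ref{p_equivalence_path_property} force every subsequential limit of the coarse data to define an element $h_R \in AHP(h_{\partial R},(a_{ij})_{k\times k})$, whence the per-class bound becomes $\exp(-n^m\Ent(R,h_R)(1-\theta(\varepsilon)))$ and summation yields the matching $\liminf$ inequality.

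The main obstacle I anticipate is the geodesic bookkeeping. Unlike the $\mathbb{Z}$-valued case the two coordinates $h_n^1,h_n^2$ are entangled, and the non-local admissibility condition~\eqref{e_asyhf_admissible} couples distant cubes through the meeting-point heights $a_{ij}$. Verifying that the coarse data extracted from any configuration converge to an admissible asymptotic profile, and conversely that in the construction step the Kirszbraun extensions respect the $a_{ij}$ prescribed on the boundary, is the delicate point at which Lemma~\ref{p_equivalence_path_property}, Lemma~\ref{p_extension_of_bhf}, the discrete Kirszbraun theorem, and the depth-based concentration of Section~\ref{s_technical_observations} must all be combined, and where pure analogy with the $\mathbb{Z}$-valued or domino-tiling setting breaks down.
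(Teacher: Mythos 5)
Your proposal is correct and follows essentially the same two-step strategy as the paper: a cube decomposition of $R$, Kirszbraun gluing to construct configurations for the upper bound, concentration and classification into finitely many profile classes for the lower bound, and convexity plus compactness to produce a minimizer. The only organizational difference is that the paper first isolates these ideas into the Profile Theorem (Theorem~\ref{p_profile_theorem}), stating that $\Ent(R,h_R) = -\frac{1}{|R_n|}\ln|HP_n(h_R,\delta,\varepsilon)| + \theta(\cdot)$ for each fixed profile, and then obtains Theorem~\ref{p_main_result_variational_principle} from it by a short compactness argument, whereas you prove the two inequalities of the variational principle directly; the paper also handles the ``per-class'' overestimate through the dedicated Lemma~\ref{p_entropy_depth_condition_local_surface_tension} and the fixed/free boundary equivalence of Theorem~\ref{geodesic_square} rather than invoking the concentration inequality in the raw form you suggest, but these are the precise tools that make your sketch rigorous.
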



\begin{figure}
 \centering
 \begin{subfigure}[b]{0.4\textwidth}
 \includegraphics[width=\textwidth]{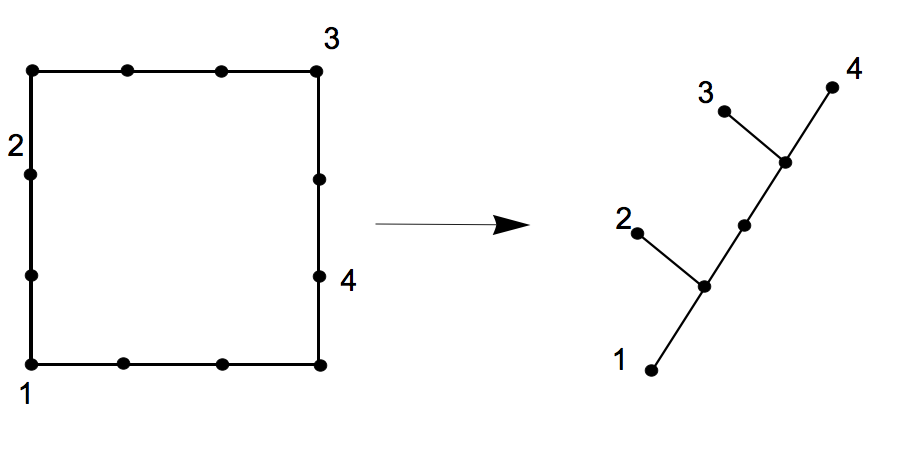}
 \caption{n=1}
 
 \end{subfigure}
 \begin{subfigure}[b]{0.4\textwidth}
 \includegraphics[width=\textwidth]{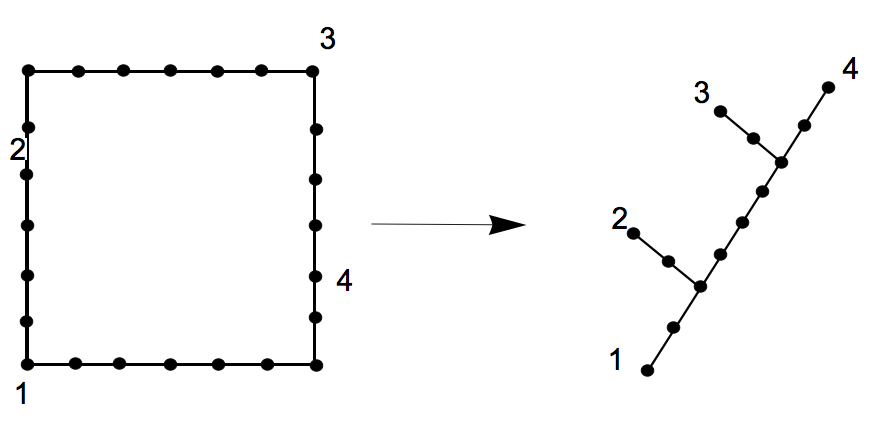}
 \caption{n=2}
 
 \end{subfigure}
 \begin{subfigure}[b]{0.4\textwidth}
 \includegraphics[width=\textwidth]{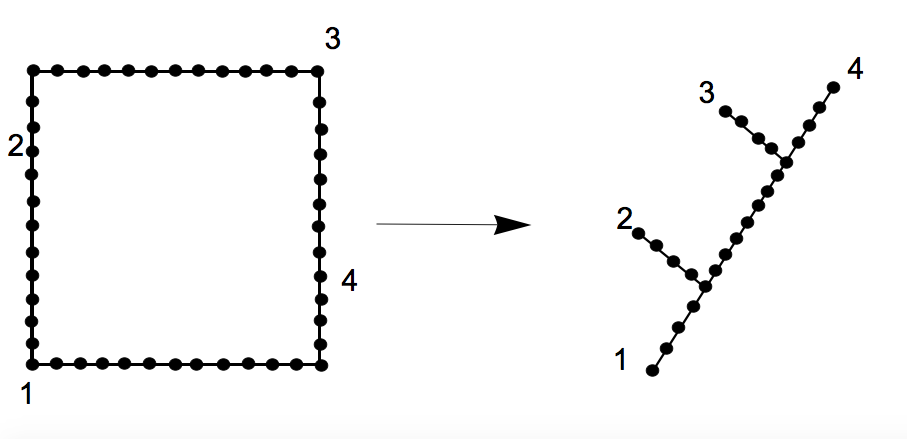}
 \caption{n=4} 
 
 \end{subfigure}
 \caption[scaling]{Scaling of graph homomorphism into $\tree$}\label{f_scaling_graph_homo} 
\end{figure}
 
Because the function~$\nabla f_R^1$ is Lipschitz by Definition~\ref{d_asymp_height_profile}, the gradient~$\nabla f_R^1$ exists almost everywhere by Rademacher's theorem. Because the local surface tension is bounded from below and convex we know that the variational problem~\eqref{e_main_result} has a minimizer. However, we do not know if the minimizer of the surface tension is unique.\\

Let us now turn to the second part of the variational principle, namely the profile theorem (see Theorem~\ref{p_profile_theorem} from below). The profile theorem contains information about the profile of a graph homomorphisms~$h_n$ that is chosen uniformly at random from~$ M (R_n, h_{\partial R_n})$. Heuristically, the statement of the profile theorem is the following. Let us consider an asymptotic boundary height profile~$f_R \in AHP (f_{\partial R}, (a_{i,j})_{k \times k})$. Then the macroscopic entropy~$\E(f_R)$ is given by the number of graph homomorphisms~$h_n \in M(R_n, h_{\partial R_n})$ that are close to~$f_R$. Applying this statement to the minimizer~$f_{\min}$ of the continuous entropy~$\E(f)$ has the following consequence. The uniform measure on the set of graph homomorphisms~$M_{(R_n, h_{\partial R_n})}$ concentrates on graph homomorphisms~$h_n$ that have a profile that is close to~$f_{\min}$. As a consequence, a uniform sample of~$M(R_n, h_{\partial R_n})$ will have a profile that is close to the minimizing profile~$f_{\min}$ for large~$n$.\\

Let us now make this discussion precise. For that purpose, we have to specify when the profile of a graph homomorphism~$h_n$ is close to an asymptotic height profile~$f$.

\begin{definition}\label{d_ball_around_h}
For fixed~$\varepsilon>0$ and integer $n$, we define the simplicial lattice $\mathcal{K}_n^{ \varepsilon}$ with $\varepsilon$-spacing at scale $n$ to be the union of the boundary of the simplices $\Delta^\sigma(k_1,..k_m)$ defined by
\begin{align}\label{e_lattice}
& \Delta^\sigma(k_1,..k_m) \\
&=\{x \in \Z^m: 0 \leq x_{\sigma(1)}- \lfloor \varepsilon n \rfloor k_1\leq ... \leq x_{\sigma(m)}- \lfloor \varepsilon n \rfloor k_m \leq \lfloor \varepsilon n \rfloor \}
\end{align}
for $\sigma$ permutation of size $m$ and $(k_1,..,k_m) \in \Z^m$ (see Figure~\ref{f_set_R_with_grid_section_main_result}). In other words, $\mathcal{K}_n^{ \varepsilon}$ is the set of points in $\Z^m$ for which all of the inequalities in \eqref{e_lattice} are satisfied for a fixed $k$-uplet $(k_1,..,k_m)$ and at least one of them is an equality.

For a given asymptotic height profile~$f$, we define the ball~$HP_{n} (f, \delta, \varepsilon)$ of size~$\delta >0$ on the scale~$\varepsilon>0$ by the formula
\begin{align}\label{e_balls_around_AHP_intro}
& HP_n (f, \varepsilon, \delta) \\
&= \left\{ h_{n} \in M (R_n, h_{\partial R_n}) \ | \ \sup_{x \in \frac{1}{n}\mathcal{K}_n^{ \varepsilon} \cap \frac{1}{n}R_n } \left| \frac{1}{n} d_{\mathcal{T}} (h_n (x), \mathbf{r}) - f^1 \left( \frac{x}{n} \right) \right| \leq \varepsilon \delta \right\},
\end{align} 
where the set~$M (R_n, h_{\partial R_n})$ of graph homomorphisms is given by~\eqref{e_d_micro_partition_function}. 
\end{definition}

\begin{figure}

\includegraphics[width=0.6\textwidth]{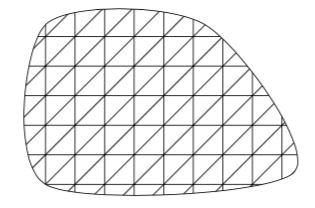}\caption{Illustration of the set~$R$. The grid is the set~$R_{\tt{grid, \varepsilon}}.$}\label{f_set_R_with_grid_section_main_result}
\end{figure}

Informally, $HP_n (f, \varepsilon, \delta) $ is the set of all graph homomorphisms which stays within $\varepsilon \delta $ of the rescaled function $f(\frac{\cdot}{n})$ on the boundary on the rescaled lattice $\frac{1}{n}\mathcal{K}_n^{ \varepsilon}$. Now, let us formulate the profile theorem.

\begin{theorem}(Profile theorem)\label{p_profile_theorem}
Let~$(f_R, (a_{i,j})_{k \times k})$ be an extension of the asymptotic boundary height profile~$(f_{\partial R},(a_{i,j})_{k \times k})$. Then
\begin{align}\label{e_profile_theorem}
 \lim_{n \to \infty} \frac{1}{|R_n|} \ln |HP_n (f_R, \delta, \varepsilon)| = - \E(f^1_R) + \theta_{\varepsilon}(\delta) + \theta(\varepsilon) ,
\end{align}
where~$\{\theta_{\varepsilon}\}_{\varepsilon \in \R}$ is a family of functions indexed by $\varepsilon$ such that for $\varepsilon$ fixed~$\lim_{x\downarrow 0 } \theta_{\varepsilon} (x)=0$ and $\theta$ is a function such that $\lim_{x\downarrow 0 } \theta(x)=0$.
\end{theorem}

\begin{remark}\label{r_geodesic_effect_negligible}
 We want to point out that the second coordinate~$f^2$ does not play a role in the definition~\eqref{e_balls_around_AHP_intro} of $HP_n (f_R, \varepsilon,\delta)$. This means that we neglect the information which one-sided geodesic a graph homomorphism~$ h_{n} \in M (R_n, h_{\partial R_n})$ follows, because the entropic effect of choosing the geodesics is of lower order. Rigorously, this fact is deduced in Lemma~\ref{p_entropy_depth_condition_local_surface_tension} below. Here, let us give a heuristic argument. The variational principle lives on the scale~$|R|$. Approximating the set~$R$ by blocks of side length~$\varepsilon n$ it follows that ~$|R| \approx l \varepsilon^m n^m $, where~$l$ is the number of blocks. Having a close look at the definition~\eqref{e_balls_around_AHP_intro} of $HP_n (f_R, \varepsilon,\delta)$ shows that only the grid $\mathcal{K}_n^{ \varepsilon}$ is important. Hence, the entropic effect of choosing different geodesics lives at most on the scale of the length of the lattice $\mathcal{K}_n^{ \varepsilon}$. The length of the lattice $\mathcal{K}_n^{ \varepsilon}$ is of the order~$l \varepsilon^{m-1} n^{m-1} $ and therefore negligible on the scale of the variational principle. 
\end{remark}

Remark~\ref{r_geodesic_effect_negligible} shows that choosing different one-sided geodesics $\mathbf{g}_1, \ldots, \mathbf{g}_k$ and meeting points~$(a_{i,j})_{k \times k}$ has no effect on Theorem~\ref{p_profile_theorem}. However, it still has an effect on the variational principle formulated in Theorem~\ref{p_main_result_variational_principle}. Choosing different geodesics and meeting points changes the set~$AHP(f_{\partial R}, (a_{i,j})_{k \times k})$ of asymptotic height functions~$f_{R}$ over which the continuous entropy~$\E (f^1_R)$ is minimized. This is another main aspect of how the variational principle of Theorem~\ref{p_main_result_variational_principle} is distinct from the variational principle for domino tilings~\cite{CKP01}.\\

The proofs of the variational principle (see~Theorem~\ref{p_main_result_variational_principle}) and of the profile theorem (see~Theorem~\ref{p_profile_theorem}) are given in Section~\ref{s_proof_variational_principle}. The profile theorem is deduced first and then used for verifying the variational principle via a compactness argument. The second main ingredient in the compactness argument is that the variational problem on the right hand side of~\eqref{e_main_result} has a minimizer, which follows from the fact that the local surface tension is convex and bounded from below (see Section~\ref{s_local_surface_tension} and Theorem~\ref{p_convexity_local_surface_tension}). \\

Let us now explain the main idea for deducing the profile theorem. One observes that Lipschitz functions can approximated very well by piecewise affine function on a simplicial complex. This observation heuristically yields that the profile theorem only needs to be verified for a simplicial complex~$R$ and a piecewise affine profile~$f$. Here, we want to recall that simplicial complex means that~$R$ is a union of finitely many simplices~$R_1, \ldots, R_k$. The desired estimate~\eqref{e_profile_theorem} is deduced in two steps. In the first step one underestimates the number of possible configurations, establishing one direction of the desired (in-)equality. In the second step one overestimates the number of possible configurations, establishing the other direction of the desired (in-)equality.\newline
Let us now describe how the underestimation works. Instead of looking at all possible graph homomorphisms that are close to the profile~$f$, one only considers those that match the profile~$f$ exactly on the boundaries of the individual simplices $R_i$. By this procedure the simplices~$R_i$ become independent of each other. Hence, the entropy of the simplicial complex~$R$ is bigger then the sum of the entropy of the individual simplices~$R_i$ with fixed linear boundary data. \newline
For overestimating the number of configuration on~$R$, we allow the values of the graph homomorphisms on~$\partial R_i$ to be chosen independent from each simplex, as long as they do not fluctuate too much from the affine profile~$f$. Hence, configurations again become independent on each simplex. Therefore, the entropy of the simplicial complex~$R$ is smaller then the sum of the entropy of the individual simplices~$R_i$ with fluctuating linear boundary data. \\

The only missing ingredient is that on a simplex the entropy with fluctuating linear boundary data is equivalent to the entropy with fixed linear boundary data. This fact is provided in Lemma~\ref{p_entropy_depth_condition_local_surface_tension} as a consequence of the equivalence of the local surface tension, i.e.~that the microscopic entropy on a box with fixed or fluctuating boundary conditions agree (see discussion in the introduction and Theorem~\ref{geodesic_square}). For the proof of Theorem~\ref{geodesic_square} we need two crucial technical results, namely the Kirszbraun theorem (see Theorem~\ref{Kirszbraun}) and the concentration inequality (see Theorem~\ref{main_concentration_section_3}) which are provided in Section~\ref{s_technical_observations}.


\section{The local surface tension} \label{s_local_surface_tension}

It is well known that there are many equivalent definitions of the local surface tension (see e.g \cite{She05}). The most simple framework to show the existence is to define the surface tension as the limit of a microscopic entropy on a box with canonical, fixed, linear boundary conditions, as it is done in Theorem~\ref{p_existence_lst_section_2}. In this framework the existence follows easily from subadditivity and Feteke's lemma. One of the most important ingredients for the proof of a variational principle is that the local surface tension is robust to changes in the boundary conditions (see Theorem~\ref{geodesic_square} below). In this article, we show that this robustness is a consequence of a combination of the Kirszbraun theorem and of the concentration inequality. In principle, we could proceed in this framework and deduce the concentration inequality on a box for a fixed canonical boundary data.\\

However, in this article we also show another application of the concentration inequality: It is the existence of a continuum of ergodic gradient Gibbs measures. In order to deduce this result, we need concentration on the level of translation invariant boundary data. To avoid deducing the concentration inequality for two different spaces, we chose to define the local surface tension via the limit of $n$-translation invariant boundary data. This will lead to the notion of local surface tension associated to periodic boundary conditions, denoted in the introduction with~$\ent^{\tt per}$. In Theorem~\ref{p_existence_local_surface_tension} we show the existence of~$\ent^{\tt per}$. A consequence of Theorem~\ref{geodesic_square} is that the local surface tension~$\ent^{\tt fixed}$ and~$\ent^{\tt per}$ agree, justifying the existence of~$\ent^{\tt fixed}$ a-posterior. This also shows that the local surface tension is a universal object. To be self-contained, our argument of the existence does not use subadditivity but relies on our two main technical ingredients, namely the Kirszbraun theorem and the concentration inequality.\\

In Section~\ref{s_micro_surface_tenstion}, we give the precise definition of the microscopic surface tension associated to $n$-translation invariant boundary data and deduce some important auxiliary results. In Section~\ref{s_existence_local_surface_tension}, we show the existence of the local surface tension~$\ent^{\tt per}$ as the limit of the microscopic surface tension (see Theorem~\ref{p_existence_local_surface_tension} from below). We also show that~$\ent^{\tt per}= \ent^{\tt fixed}$, which deduces Theorem~\ref{p_existence_lst_section_2}. In Section~\ref{s_convexity_local_surface_tension}, we deduce the convexity of the local surface tension (see Theorem~\ref{p_convexity_local_surface_tension} from below).

\subsection{Definition of the microscopic surface tension associated to periodic boundary data}\label{s_micro_surface_tenstion}

We use a similar approach as in~\cite{CKP01} and define the local surface tension~$\ent(s)$ as the limit of a microscopic surface tension~$\ent_{n}(s)$, i.e.
\begin{align}\label{d_local_surface_tension_via_periodic_bcs}
 \ent(s) := \ent^{\tt per}(s):= \lim_{n \to \infty} \ent_{n} (s).
\end{align}
In order to define the microscopic surface tension~$\ent_n(s)$ associated to periodic boundary data let us study the translation invariant measures of our model. For this reason, we start with generalizing the notion of periodicity of height functions to graph homomorphisms. For this purpose, we will identify the $d$-regular rooted tree $\tree$ with the group $$G= <\alpha_1,..,\alpha_d|\alpha_1^2=..=\alpha_d^2=e >.$$
This is done through the natural bijection induced by the Cayley graph of $G$ generated by the $\alpha_i$'s. We use the convention that the root of $\tree$ is represented by the identity of $G$. The reason for this identification is that the group structure provides an easy way to define gradient measures. Using the previous bijection we can choose a canonical way to associate a unique $\alpha_i$ to each edge of $\tree$. As a consequence, there is a natural way to associate to a graph homomorphism~$h$ a dual function $\tilde h $ acting on edges of~$ \mathbb{Z}^m$:
\begin{definition}[Dual of a graph homomorphism]\label{d_dual_graph_homomorphism}
Let~$h: \mathbb{Z}^m \to \mathcal{T}$ be a graph homomorphism. We define its dual map $$\tilde h : \left\{e_{x,y} \ | \ x,y \in \mathbb{Z}^m : |x-y|_{\ell_1}=1 \right\} \to \left\{\alpha_1, \ldots, \alpha_d \right\}$$ in the following way. Note that for any~$x \sim y \in \mathbb{Z}^m$ there is a unique~$\alpha_i$ such that~$h(y)=\alpha_i h(x)$. Then, the value of the dual function~$\tilde h$ on the edge~$e_{xy}$ is given by~$\tilde h (e_{xy})= \alpha_i$.\newline
The dual map~$\tilde h$ is the gradient map associated to $h$ and determines the graph homomorphism up to translations in the graph~$\mathcal{T}$. If there is no source of confusion, we will denote the dual map and the graph homomorphism with the same symbol~$h$.\newline
The dual map~$\tilde h$ maps each path $\mathbf{p}=\{ x_0,..,x_l \}$ in~$\mathbb{Z}^m$ onto a word in the alphabet $\{ \alpha_1,..,\alpha_d \}$ and it is not hard to see that this word only depends on the first and last vertex of the path $\mathbf{p}$. We will use the notation $\tilde h(\mathbf{p}_{xy})$ for the word corresponding to a path between $x$ and $y$.
\end{definition}

Let us now define the analog of periodicity for graph homomorphisms.
\begin{definition}[$n$-translation invariant graph homomorphism] \label{d_translational_invariant_graph_homomorphism}
We denote by $\vec{i}_k$ the $k$-th vector of the standard basis of $\Z^m$. Let $h: \mathbb{Z}^m \rightarrow \tree$ be a graph homomorphism. We say that $h$ is $n$-translation invariant if for all $x \sim y \in \Z^m$ and $k \in \{1,..,m\}$
$$\tilde h(e_{xy})=\tilde h(e_{(x+n\vec{i}_k)(y+n\vec{i}_k)}).$$
\end{definition}


\begin{remark}
If we denote by $\tau_{k}$ the shift by $\vec{i}_k$ in $\Z^m$ then $h$ is $n$-translation invariant iff $h \circ \tau_k$ is $n$-translation invariant.
\end{remark}

In order to define the microscopic surface tension we need to associate to every $n$-translation invariant homomorphism $h: \mathbb{Z}^m \rightarrow \tree$ a \emph{slope} which indicates the speed at which the homomorphism travels on the graph in every direction of the plane.

\begin{definition}[Slope of a translation invariant graph homomorphism]
Let $h: \mathbb{Z}^m \rightarrow \tree$ be a $n$-translation invariant homomorphism. The slope $s=(s_1,..,s_m)$ of $h$ is defined by
$$s_k=\frac{1}{n}\min_{x \in \Z^m}d_{\tree}(h(x),h(x+n\vec{i}_k)), \qquad \text{ for }1 \leq k \leq m.$$
\end{definition}

An essential property of $n$-translation invariant homomorphisms is that, if the slope is nonzero, they must stay within finite distance of a unique two-sided geodesic of $\tree$ or, if the slope is zero, stay within finite distance of a single point. This statement is made precise in the next lemma.

\begin{lemma}\label{p_closeness_translation_invariant_geodesic}
Let $h: \mathbb{Z}^m \rightarrow \tree$ be a $n$-translation invariant homomorphism with slope~$s \neq 0$. Then there exist a unique two-sided geodesic $\mathbf{g} \subset \tree$ such that for all ~$x \in \Z^m$
\begin{align}\label{e_finite_range_non_null_slope_estimate}
 d_{\mathcal{T}} \left(h(x) , \mathbf g \right) \leq \frac{mn}{2}.
\end{align}
In this case we say that~$h: \mathbb{Z}^m \rightarrow \tree$ is supported on the two-sided geodesic~$\mathbf{g}$.\newline
If $h$ has slope~$s=0$ then $h$ has finite range, and for all~$x \in \mathbb{Z}^m$
\begin{align}\label{e_finite_range_0slope_estimate}
d_{\mathcal{T}}(h(x), h(0))\leq \frac{mn}{2}.
\end{align}
\end{lemma}

\begin{proof}[Proof of Lemma~\ref{p_closeness_translation_invariant_geodesic}]
We start with considering the case where the slope of $h$ is $(0,..,0)$. In this case the $n$-invariance yields that for $l \in \Z$ and $1 \leq k\leq m$ there exist $x_k$ such that:
\begin{align*}
h(x_k+ln\vec{i}_k)=h(x_k)
\end{align*}
Using the $n$ translation invariance we obtain that for all $1 \leq k\leq m$:
\begin{eqnarray}
\tilde{h}(\mathbf{p}_{0(0+n\vec{i}_k)}) &= & \tilde{h}(\mathbf{p}_{0x_k})\tilde{h}(\mathbf{p}_{x_k(x_k+n\vec{i}_k)})\tilde{h}(\mathbf{p}_{(x_k+n\vec{i}_k)(0+n\vec{i}_k)}) \\
& = & \tilde{h}(\mathbf{p}_{0x_k})\tilde{h}(\mathbf{p}_{x_k0})=id_G \\
\end{eqnarray}
This yields that for $(l_1,..,l_m) \in \Z^m$
\begin{align*}
h(x_1, \ldots, x_m) = h(x_1 + l_1 n, \ldots, x_m + l_m n). 
\end{align*}
Now, the estimate~\eqref{e_finite_range_0slope_estimate} follows directly from the observation that any point~$x \in \mathbb{Z}$ is within graph distance~$\frac{mn}{2}$ of the set~$$\left\{ k_1 \vec{i}_1+ \ldots, +k_m \vec{i}_m \in \mathbb{Z}^m \ | \ k_i \in \mathbb{Z} \right\}.$$

Consider now the case where the slope of $h$ is not zero. We start by noticing that any two-sided geodesic~$\mathbf{g}$ that satisfies~\eqref{e_finite_range_non_null_slope_estimate} must be unique. Indeed, since $G$ is hyperbolic, two geodesics cannot stay within bounded distance of each other. As a consequence $h(\Z^m)$ can only stay within bounded distance of at most one two-sided geodesic~$\mathbf{g}$. \newline

Let us now deduce the estimate~\eqref{e_finite_range_non_null_slope_estimate}. Let $s_i$ be a non-zero coefficient of the slope~$s=(s_1, \ldots, s_m)$. W.l.o.g.~generality we assume that~$s_1 >0$. Let us choose $y=(y_1,..,y_m)$ such that 
$d_{\mathcal{T}}(h(y),h(y+n\vec{i}_1))=n s_1$ and let us define $\omega=\tilde{h}(\mathbf{p}_{y(y+n\vec{i}_1)})$.

Along the line in $\Z^m$ with equation $x_2=y_2,..,x_m=y_m$ the map~$h$ travels with speed~$s_1$ on the only two-sided geodesic~$\mathbf{g}$ which goes through all the vertices $v_k \in \mathcal{T}$ such that the path between $h(y)$ and $v_k$ is $\omega^k$ for $k \in \Z$. Moreover, since $h$ is $n$-periodic we can make the same observation if we replace $y$ by one of the points $y_{k_2,..,k_m}=(y_1,y_2+nk_2,..,y_m+nk_m)$ where $k_2,..,k_m\in\Z$. For each one of those points we obtain that along the line with equation $x_2=y_2+k_2,..,x_m=y_m+k_m$ the map~$h$ travels with speed~$s_1$ on a two-sided geodesic~$\mathbf{g}_{k_2,..,k_m}$. 
 
 Since any two of those lines stays within bounded distance of each other, any two-sided geodesics in $ \{\mathbf{g}_{k_2,..,k_m} \}_{k_2,..,k_m \in \Z}$ must also stay within bounded distance of each other. Hence we deduce that $\mathbf{g}_{k_2,..,k_m}= \mathbf{g}$ for all $k_2,..,k_m\in\Z$. As a consequence we also deduce that for all $k_1,k_2,..,k_m\in\Z$, the vertex $h(y_{k_1,..,k_m})$ is on $\mathbf{g}$ where $y_{k_1,..,k_m}=(y_1+nk_1,..,y_m+nk_m)$.
 
Now, let~$z \in \mathbb{Z}^m$ be arbitrary, we can write~$z=y_{k_1,..,k_m}+ (r_1,..,r_m)$ for some numbers~$k_1,..,k_m\in \mathbb{Z}$ and~$ - \frac{n}{2} \leq r_i < \frac{n}{2}$ for $1 \leq i \leq m$. Since graph homomorphisms are $1$-Lipschitz, we obtain that 
$$d_{\mathcal{T}}(h(z),h(y_{k_1,..,k_m})) \leq \sum_{1 \leq i \leq m} r_i \leq \frac{mn}{2}$$
 which is the desired estimate~\eqref{e_finite_range_non_null_slope_estimate}.
\end{proof}

\begin{definition}[Microscopic surface tension~$\ent_{n}(s)$]\label{e_def_micro_surface_tension}
Let~$\mathbf{g}\in \mathcal{T}$ be a periodic two-sided geodesic. For~$s=(s_1, \ldots, s_m) \in [-1,1]^m$ with~$|s|_{\ell_1}<~1$ let us denote by $\mathcal{H}^\mathbf{g}_n(s)$ the set 
\begin{align}
& \mathcal{H}^{\mathbf{g}}_n (s) : = \left\{ h: \mathbb{Z}^m \to \tree \ : \ \mbox{$h$ is $n$-translation invariant} \right.\\[-0.5ex] 
& \quad \mbox{with slope~$\left( \frac{\lfloor s_1 n \rfloor}{n}, \ldots, \frac{\lfloor s_m n \rfloor}{n} \right)$ supported on~$\mathbf{g}$ and} \\ 
 & \quad \mbox{$\Pi_{\mathbf{g}} \left( h(0) \right) = \mathbf{g} (0)$}\text{ if $d_{\mathcal{T}} \left(h(0),\Pi_{\mathbf{g}} \left( h(0) \right)\right)=0\mod 2$ and}\\
 & \quad \left. \mbox{$\Pi_{\mathbf{g}} \left( h(0) \right) = \mathbf{g} (1)$ otherwise } \right\},
\end{align}
where~$\Pi_{\mathbf{g}} : \mathcal{T} \to \mathbf{g}$ denotes the projection onto the two-sided geodesic~$\mathbf{g}$. 

The microscopic surface tension~$\ent_{n}(s)$ is defined as
\begin{align}
 \ent_n (s) : = - \frac{1}{n^m} \ln \left| \mathcal{H}^\mathbf{g}_n(s) \right|. 
\end{align}
We denote by $\mathbb{P}_n^{s}$ the uniform probability measure on $\mathcal{H}^\mathbf{g}_n(s)$.
We also define
\begin{align}
\mathcal{H}^{\mathbf{g},free}_n (s) &: = \left\{ h: \mathbb{Z}^m \to \tree \ : \ \mbox{$h$ is $n$-translation invariant} \right.\\[-0.5ex] 
& \qquad \qquad \left. \mbox{with slope $\left( \frac{\lfloor s_1 n \rfloor}{n}, \ldots, \frac{\lfloor s_m n \rfloor}{n} \right)$ supported on~$\mathbf{g}$} \right\}. 
\end{align}
\end{definition}

\begin{remark}
 Because all two-sided geodesics are isomorphic, the definition of~$\ent_n (s)$ is independent from the particular choice of~$\mathbf{g}$. In particular by re-orientating the geodesic~$\mathbf{g}$ we assume wlog.~that~$s_1 \geq 0$. The second and third condition in the definition of~$\mathcal{H}^{\mathbf{g}}_n (s)$ anchors the height/depth of the graph homomorphisms~$h$ on the geodesic~$\bf{g}$. As a consequence, the space~$\mathcal{H}^{\mathbf{g}}_n (s)$ is finite. The parity condition on the anchoring comes from the fact that graph homomorphisms conserve the parity. For the precise definition of depth we refer to Definition~\ref{d_depth_on_a_tree}. 
\end{remark}

We note that the Definition~\ref{e_def_micro_surface_tension} of~$\ent_n(s)$ is well posed, i.e.~the set~$\mathcal{H}^\mathbf{g}_n(s)$ is not empty. Indeed, one can easily construct elements of~$\mathcal{H}^\mathbf{g}_n(s)$ by using the Kirszbraun theorem for graphs (see Theorem~\ref{Kirszbraun} from below). The space~$\mathcal{H}^{\mathbf{g},free}_n(s)$ does not play a further role in Section~\ref{s_local_surface_tension} but will become important for the proof of the concentration inequality in Section~\ref{s_concentration}.

\subsection{Existence of the local surface tension }\label{s_existence_local_surface_tension}

In this section, we show that the limit~\eqref{d_local_surface_tension_via_periodic_bcs}, defining the local surface tension~$\ent^{\tt per}(s)$ associated to periodic boundary data. 
\begin{theorem}\label{p_existence_local_surface_tension}
Let $s\in \mathbb{R}^m$ such that $|s|_{\ell_1} < 1$ and let~$\ent_n(s)$ be given by Definition~\ref{e_def_micro_surface_tension}. Then the limit
\begin{align}
 \ent^{\tt per}(s) : = \lim_{n \to \infty} \ent_{n} (s)
\end{align}
 exists and defines the local surface tension~$\ent^{\tt per} (s)$. For~$|s|_{\ell_1} = 1$ we define~$\ent^{\tt per}(s):=0.$
\end{theorem}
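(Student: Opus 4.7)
\emph{Proof proposal.} The plan is to establish an approximate subadditivity estimate
\begin{align}
 \bigl|\ent_{kn}(s) - \ent_n(s)\bigr| \leq C \sqrt{\tfrac{\log n}{n}}
\end{align}
uniformly in $k \in \mathbb{N}$, from which the Cauchy criterion produces the limit $\ent(s) := \lim_{n\to\infty} \ent_n(s)$. The two directions will be handled by gluing $n$-invariant configurations into a $kn$-invariant one and, conversely, by restricting a $kn$-invariant configuration to sub-cubes of side $n$, with the discrete Kirszbraun theorem producing the gluings and the concentration inequality from Section~\ref{s_technical_observations} controlling the boundary layers along which matching must be enforced.

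For the lower bound on $|\mathcal{H}_{kn}^{\mathbf{g}}(s)|$, I would partition $S_{kn}$ into $k^m$ translates of $S_n$ and fill each cell with an element of a favourable subset $\widetilde{\mathcal{H}}_n^{\mathbf{g}}(s) \subseteq \mathcal{H}_n^{\mathbf{g}}(s)$ whose boundary depth profiles all lie within $\theta_n = O(\sqrt{n \log n})$ of a single typical profile. Concentration of depth under the modified Glauber dynamic, combined with Azuma-Hoeffding, guarantees $|\widetilde{\mathcal{H}}_n^{\mathbf{g}}(s)| \geq \tfrac{1}{2} |\mathcal{H}_n^{\mathbf{g}}(s)|$, so passing to the favourable subset is entropically free. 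The cell-by-cell assembly fails to be a graph homomorphism only on the $O(k^m n^{m-1})$ sites making up the shared faces between adjacent sub-cubes; on a boundary layer of thickness $O(\theta_n)$ around each such face the discrete Kirszbraun theorem produces a valid interpolation. The resulting element of $\mathcal{H}_{kn}^{\mathbf{g}}(s)$ involves modifications on at most $C k^m n^{m-1} \theta_n$ sites, each costing a factor $d$ in entropy, so after dividing by $(kn)^m$ the error is $O(\sqrt{\log n / n})$ uniformly in $k$.

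For the reverse inequality, I would restrict an arbitrary element of $\mathcal{H}_{kn}^{\mathbf{g}}(s)$ to each of the $k^m$ sub-cubes. Each restriction is a graph homomorphism on $S_n$ of slope $s + O(1/n)$, and the values of $h$ along the shared faces range over at most $d^{C k^m n^{m-1}}$ possibilities. This embeds $\mathcal{H}_{kn}^{\mathbf{g}}(s)$ into a product of slight enlargements of $\mathcal{H}_n^{\mathbf{g}}(s)$, and the Lipschitz continuity of $\ent_n$ in $s$ (itself a consequence of Kirszbraun) absorbs the $O(1/n)$ slope perturbation, yielding the matching upper bound with the same error.

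The main obstacle is showing that the $k^m$ cell-homomorphisms chosen in the lower-bound step really admit a simultaneous Kirszbraun extension across all shared faces of $S_{kn}$. In the $\mathbb{Z}$-valued setting the Lipschitz condition alone suffices, but in the $\mathcal{T}$-valued case two boundary values at the same tree-distance from $\mathbf{r}$ may sit on disjoint branches of $\mathcal{T}$, so naive gluing would create forbidden long-range jumps. This is precisely why the concentration inequality is formulated for the depth quantity rather than raw tree distance: depth records which initial segment of $\mathbf{g}$ has been traversed, and joint depth-concentration forces the relevant prefixes of the images to agree on most of the shared face, which is exactly what Kirszbraun then needs in order to interpolate across the remaining exceptional strip. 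Once this geometric point is settled, the remainder of the proof reduces to the bookkeeping described above.
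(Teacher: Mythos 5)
Your toolkit is the right one, and the two halves you propose are the two halves the paper ultimately relies on (concentration to isolate a well-concentrated sub-family, Kirszbraun to glue). But the overall logic is different. The paper never proves, or needs, an estimate of the form $|\ent_{kn}(s)-\ent_n(s)|\le C\sqrt{\log n/n}$ uniformly in $k$. Instead it proves (i) a bound on the oscillation of $\ent_n$ between \emph{nearby} indices (Lemma~\ref{p_comparison_ent_n_ent_m}), (ii) equivalence of free and pinned boundary conditions (Theorem~\ref{geodesic_square}), and (iii) that for \emph{periodic} boundary data the fixed-boundary entropy is nonincreasing along $k\mapsto\Ent(S_{kl},h_{\partial S_{kl}})$ (Lemma~\ref{p_monotonicity_subsequence}); here periodicity makes the $k^m$ cells match exactly, so no Kirszbraun interface is needed at the internal faces, and the monotonicity is elementary. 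The argument then rules out two accumulation points by contradiction. Your gluing half is a Kirszbraun-flavoured variant of (iii), and you have correctly identified the depth-versus-branch subtlety it must overcome.

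The genuine gap is in your ``reverse inequality''. Restricting an arbitrary $h\in\mathcal{H}^{\mathbf{g}}_{kn}(s)$ to an $n$-sub-cube $B_i$ does \emph{not} produce a homomorphism ``of slope $s+O(1/n)$'': the restriction is not $n$-translation invariant, so its slope is undefined, and its boundary trace on $\partial B_i$ has no a priori proximity to a linear profile. A $kn$-periodic configuration with slope $s$ only constrains $h$ at scale $kn$; inside an interior sub-cube of side $n$ the trace may wander by $O(kn)$, and the number of fillings given such a trace can be governed by a locally flat slope, i.e.\ be of order $e^{-n^m\ent_n(0)}$, which dominates $e^{-n^m\ent_n(s)}$. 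Concentration does confine a $\bigl(1-Ce^{-c\delta^2 kn}\bigr)$-fraction of configurations to within $\delta kn$ of linear, but to confine the trace to within $O(n)$ of linear inside a single cell one must take $\delta$ of order $1/k$, and then the exponent $\delta^2 kn$ is of order $n/k$, which is only large for $k$ much smaller than $n$. So the estimate is not uniform in $k$, and the Cauchy criterion as stated does not close. The paper avoids this precisely by never requiring an upper bound on $|\mathcal{H}^{\mathbf{g}}_{kn}(s)|$ at large $k$: the one-sided Fekete monotonicity for periodic data, combined with the near-diagonal oscillation bound of Lemma~\ref{p_comparison_ent_n_ent_m}, suffices. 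If you wish to retain the direct-Cauchy route you would have to restrict to bounded $k$ (say $k=2$), iterate, and supply a separate near-diagonal comparison, at which point the structure essentially collapses to the paper's.
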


In this section, we also show another result which is fundamental not only for the proof of Theorem~\ref{p_existence_local_surface_tension} but also for deducing the variational principle. It is the robustness of the local surface tension with respect to changes in the boundary condition (see Theorem~\ref{geodesic_square} below). A combination of Theorem~\ref{p_existence_local_surface_tension} and Theorem~\ref{geodesic_square} immediately yields the statement of Theorem~\ref{p_existence_lst_section_2} and shows the identity
\begin{align}
 \ent^{\tt per}(s) = \ent^{\tt fixed}(s).
\end{align}
In the remaining article, let us follow the convention that $\ent(s)$ denotes the notion of local surface tension~$\ent^{\tt per}(s)$ associated to periodic boundary data.\\

The proof of Theorem~\ref{p_existence_local_surface_tension} and the proof of Theorem~\ref{geodesic_square} uses the following two ingredients. The first ingredient is a Kirszbraun theorem for graphs. It states under which conditions one can attach together two different graph homomorphisms.
\begin{theorem}[Kirszbraun theorem for graphs] \label{Kirszbraun}
	Let $\Lambda$ be a connected region of $\Z^m$, $S$ be a subset of $\Lambda$ and $\bar{h}: S \rightarrow \mathcal{T}$ be a graph homomorphism which conserves the parity. There exists a graph homomorphism $h: \Lambda \rightarrow \mathcal{T}$ such that $h = \bar{h}$ on $S$ if and only if for all $x,y$ in $S$ 
	\begin{equation}
	d_{\mathcal{T}}(\bar{h}(x),\bar{h}(y)) \leq d_{\Lambda}(x,y) .
	\end{equation}
\end{theorem}
We give the proof of Theorem~\ref{Kirszbraun} in Section~\ref{s_kirszbraun}.

The second ingredient is a concentration inequality. It states that, for canonical boundary data, a graph homomorphism cannot deviate too much from a linear height profile.
\begin{theorem}[Concentration inequality] \label{main_concentration_section_3}
	There exists universal constants $C$ and $c$ such that, under the uniform measure~$\mathbb{P}_n^{s}$ on $\mathcal{H}^\mathbf{g}_n(s)$, for all $x=(x_1,..,x_m)$ in $\Z^m$, and for all~$\varepsilon \geq n^{-0.45}$ we have
	\begin{align}\label{e_main_concentration}
	\mathbb{P}_n^{s} \left( \max_{x \in \mathbb{Z}^m} d_{\mathcal{T}}(h(x),\mathbf{g}(\lfloor s \cdot x \rfloor)) \geq \varepsilon n \right) \leq Ce^{-c\varepsilon^2 n},
	\end{align}
	where~$\mathbf{g}$ is a two-sided geodesic and the map~$\mathbf{g}(n)$ is given by Convention~\ref{e_d_point_on_geodesic}.
\end{theorem}
We give the proof of Theorem~\ref{main_concentration_section_3} in Section~\ref{s_concentration}. 

The first step towards the proof of Theorem~\ref{geodesic_square} is the following statement, which shows that the microscopic surface tension is not oscillating wildly between two values with ratio close to $1$. 

\begin{lemma}\label{p_comparison_ent_n_ent_m}
	Let $s$ be such that $|s|_{\ell_1} < 1$, let $\varepsilon >0$ and let $n,n_1,n_2$ be three integers such that $(1-\varepsilon)n \leq n_1,n_2 \leq n$. Then the following inequality holds
	\begin{align}
	\left| \ent_{n_1} (s) - \ent_{n_2} (s) \right| \leq \theta(\varepsilon)+\theta \left( \frac{1}{n} \right).
	\end{align}
\end{lemma}

\begin{figure}
	
	
	

	
	\includegraphics[width=0.4\textwidth]{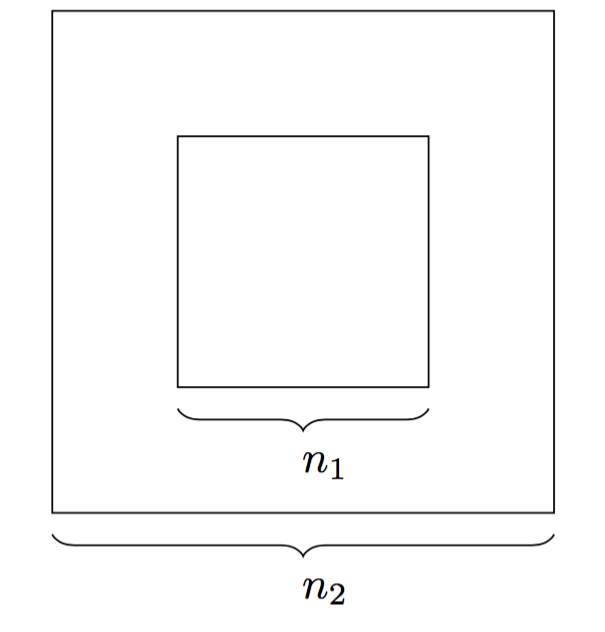}
	
	\caption{The block~$S_{n_2}$ with the centered block $S_{n_1}$ inside.}\label{f_existence_local_entropy_nested_blocks}
\end{figure}

\begin{proof}[Proof of Lemma~\ref{p_comparison_ent_n_ent_m}]
	Before starting the argument let us recall the definition of~$\ent_{n}(s)$. It is defined via
	\begin{align}
	\ent_{n}(s) := - \frac{1}{n^m} \ln \left| \mathcal{H}_{n}^{\mathbf{g}} (s) \right|.
	\end{align}
Since $n_1$ and $n_2$ have symmetric roles in the statement of Lemma \ref{p_comparison_ent_n_ent_m}, it is sufficient to show that:
	\begin{align}\label{e_simrole}
 \ent_{n_2} (s) \geq \ent_{n_1} (s)+\theta(\varepsilon)+\theta(\frac{1}{n}).
\end{align}
	Set $\delta = \frac{1-|s|_{\ell_1}}{2}\varepsilon$, our first step is to show that for large $n$ the size of the set~$\mathcal{H}_{n}^{\mathbf{g}} (s)$ is comparable to the set
	\begin{align}
	M_n : = \left\{ h \in \mathcal{H}_{n}^{\mathbf{g}} (s) \ | \ \max_{x \in S_n} d_{\mathcal{T}}(\mathbf{g}(\lfloor s \cdot x \rfloor),h(x)) \leq \delta n \right\}.
	\end{align} 
	This is a direct consequence of the concentration inequality~\eqref{e_main_concentration} of Theorem~\ref{main_concentration_section_3} which yields that for some universal~$c>0$
	\begin{equation} \label{concentration_bis}
	\mathbb{P}_n^{s} \left( \max_{x \in S_n} d_{\mathcal{T}}(\mathbf{g}(\lfloor s \cdot x \rfloor),h(x)) \geq  \delta n \right) \leq Ce^{-c \delta^2 n}.
	\end{equation}
	This implies that for $n$ large enough 
	\begin{align}\label{e_concentration_max_smaller_box}
	\mathbb{P}_n^{s}\left( h \in M_n \right) \geq 1/2.
	\end{align}
	Because~$\mathbb{P}_n^{s}$ is the uniform measure on~$\mathcal{H}^\mathbf{g}_n(s)$, this yields that for all sufficiently large~$n \in \mathbb{N}$
	\begin{align}\label{e_comparison_H_n_and_M_n}
	|M_n| \leq |\mathcal{H}^\mathbf{g}_n(s)| \leq 2 |M_n|.
	\end{align}
	
	For the second step of the argument, let us assume that the box~$S_{n_1}$ and the box~$S_{n_2}$ have the same center (see Figure~\ref{f_existence_local_entropy_nested_blocks}).
	We will show that
	\begin{align}\label{e_comparison_M_n_and_M_n_plus_2}
	 |M_{n_2}| \leq |\mathcal{H}_{n_1}^{\mathbf{g}} (s)| d^{2m\varepsilon n^m}. 
	\end{align}
	Let~$\tilde h \in M_{n_2}$.
	For all $x \in \partial S_{(1-2\varepsilon)n}$, $y \in \partial S_{n_1}$ and large enough~$n$ it holds
	\begin{align}\label{e_Kirszbraun_estimate}
	d_{\mathcal{T}}(\tilde{h}(x), \mathbf{g}(\lfloor s\cdot y \rfloor) ) & \leq d_{\mathcal{T}}(\mathbf{g}(\lfloor s \cdot x \rfloor),\mathbf{g}(\lfloor s \cdot y \rfloor))+ \delta n\\
	& \leq |s|_{\ell_1} d_{\mathbb{Z}^m}(x,y)+\frac{1-|s|_{\ell_1}}{2}d_{\mathbb{Z}^m}(x,y) \leq d_{\mathbb{Z}^m}(x,y),
	\end{align}
	where in the last inequality we used that: 
	$$  \delta n= \frac{1-|s|_{\ell_1}}{2} \varepsilon n\leq \frac{1-|s|_{\ell_1}}{2} d_{\mathbb{Z}^m}(x,y).$$
	Using the Kirszbraun theorem, this implies that the restriction ~$\tilde h_{|S_{(1-2\varepsilon)n}}$ of $\tilde{h}$ to the box~$ S_{(1-2\varepsilon)n}$ can be extended to the homomorphism $g$ defined by $g(x)=\mathbf{g}(\lfloor s \cdot x \rfloor)$ on $\partial S_{n_1}$. Additionally, we observe that there are less than~$d^{2m\varepsilon n^{m}}$ many ways to extend a configuration on~$S_{(1-2\varepsilon)n}$ to the boundary $\partial S_{n_2}$ where $2m\varepsilon n^{m}$ represents an upper bound on the maximal volume of the box difference $S_{n_2} \setminus S_{(1-2\varepsilon)n}$. This yields the desired estimate
	\begin{align}\label{e_Mn_entropy}
	|M_{n_2}| \leq | \mathcal{H}_{n_1}^{\mathbf{g}} (s)| d^{2m\varepsilon n^{m}}.
	\end{align}
	By combining the estimates \eqref{e_concentration_max_smaller_box} and \eqref{e_Mn_entropy} we obtain that
	\begin{align}\label{e_Mn_entropy2}
	\frac{1}{2}| \mathcal{H}_{n_2}^{\mathbf{g}} (s)| \leq | \mathcal{H}_{n_1}^{\mathbf{g}} (s)| d^{2m\varepsilon n^{m}}.
	\end{align}
	Now if we take the logarithm and divide both sides by $-\frac{1}{n^m_2}$, we can rewrite \eqref{e_Mn_entropy2} as
	\begin{align}\label{e_Mn_entropy3}
	 \ent_{n_2}(s) & \geq - \frac{n_1^m}{n_2^m} \ent_{n_1}(s)- \frac{1}{n_2^m} \ln 2 - \frac{2m\varepsilon n^m}{n_{2}^m} \ln d \\
	 & \geq \frac{1}{(1-\varepsilon)^m} \ent_{n_1}(s)- \frac{1}{n_2^m} \ln 2 - \frac{2\varepsilon}{(1-2\varepsilon)} \ln d \\
	 & \geq \ent_{n_1}(s)+\theta(\varepsilon)+\theta \left( \frac{1}{n} \right), 
	\end{align}
where we used that $\frac{1}{(1-\varepsilon)^m}=1+\theta(\varepsilon)$ and $\ent_{n_1}(s)\geq -d$ to go from the third to the last line in the previous inequality. This gives us the desired estimate and finishes our proof.
\end{proof}

Another ingredient for the proof of Theorem~\ref{p_existence_local_surface_tension} is that the microscopic entropy with $n$-translation invariance and fixed boundary conditions are equivalent:
\begin{theorem}\label{geodesic_square}
Let $\delta > 0$, $S_n$ be a $m$-dimensional hypercube of size $n$, $\mathbf{g}$ be a fixed two-sided geodesic in $\tree$ and~$s\in \mathbb{R}^m$ such that~$|s|_{\ell_1} < 1$. Let $h_{\partial S_n}: \partial S_n \rightarrow \tree$ such that for all $x \in \partial S_n$ it holds 
\begin{align}\label{e_bc_condition_close_geodesic}
d_{\tree}(h_{\partial S_n}(x),\mathbf{g} (\lfloor s \cdot x \rfloor) \leq \delta n,
\end{align}
where~$\mathbf{g} (\lfloor s \cdot x \rfloor)$ is given by Convention~\ref{e_d_point_on_geodesic}. Then it holds that 
\begin{align}\label{e_ent_free_vs_bounded_bc}
\Ent(S_n, h_{\partial S_n}) = \ent_n(s) + \theta \left( \frac{1}{n} \right) + \theta (\delta),
\end{align}
 where~$\Ent(S_n, h_{\partial S_n})$ is given by~\eqref{e_d_microscopic_entropy} and $\ent_n(s)$ is the microscopic surface tension given by Definition~\ref{e_def_micro_surface_tension}.
\end{theorem}

\begin{proof}[Proof of Theorem~\ref{geodesic_square}]
In order to deduce the desired statement it suffices to show that
\begin{align}~\label{e_fixed_vs_free_bc_lower_estiamte2}
\ent_{ n }(s) \leq \Ent (S_n, h_{\partial S_n})+ \theta (\delta)+ c \frac{\delta^2}{n^{m-1}}
\end{align}
and 
\begin{align}~\label{e_fixed_vs_free_bc_upper_estiamte2}
\ent_{ n }(s) \geq \Ent (S_n, h_{\partial S_n})+ \theta (\delta) + c \frac{\delta^2}{n^{m-1}}.
\end{align}

We start with deducing the estimate~\eqref{e_fixed_vs_free_bc_lower_estiamte2}. We set $\varepsilon= \frac{\delta}{1-|s|_{\ell_1}}$. It follows from the concentration estimate of Theorem~\ref{main_concentration_section_3} that
\begin{align}\label{e_concentration_free_bc_vs_fixed_bc}
\mathbb{P}_{(1-2\varepsilon)n}^s \left( \max_{x \in \Z^m}d_{\tree}(h(x),\mathbf{g}(\lfloor s \cdot x \rfloor ) \geq \delta n \right) \leq C e^{-c \delta^2 n}, 
\end{align}
where~$\mathbb{P}_{(1-2\varepsilon)n}^s$ denotes the uniform measure on $\mathcal{H}^{\mathbf{g}}_{(1-2\varepsilon)n} (s)$. Let us define the set~$M_{(1-2\varepsilon)n, \delta} \subset \mathcal{H}^{\mathbf{g}}_{(1-2\varepsilon)n} (s)$ according to
\begin{align}
 M_{(1-2\varepsilon)n, \delta} := \left\{ h \in \mathcal{H}^{\mathbf{g}}_{(1-2 \varepsilon)n} (s) : \max_{x \in \Z^m}d_{\tree}(h(x),\mathbf{g}(\lfloor s \cdot x \rfloor) \leq \delta n \right\}.
\end{align}
Then it follows from~\eqref{e_concentration_free_bc_vs_fixed_bc} that
\begin{align}\label{e_comparison_free_bc_close_to_geodesic_bc}
\frac{ \left| M_{(1-2\varepsilon)n, \delta} \right|}{|\mathcal{H}^{\mathbf{g}}_{(1-2 \varepsilon)n} (s)|} \leq C e^{-c \delta^2 n}.
\end{align}
By taking the logarithm and dividing by $(1- 2 \varepsilon)^m n^m$ in the previous equation we obtain
\begin{align}\label{e_comparison_free_bc_close_to_geodesic_bc}
 \ent_{(1- 2 \varepsilon) n} (s) \geq - \frac{1}{(1- 2 \varepsilon)^m n^m} \ln \left| M_{(1-2\varepsilon)n, \delta} \right| - c\frac{\delta^2}{n^{m-1}}.
\end{align}
We observe that due to \eqref{e_bc_condition_close_geodesic} and the Kirszbraun theorem (cf.~Theorem~\ref{Kirszbraun}) any element~$h_{(1- 2 \varepsilon) n} \in M_{(1-2\varepsilon)n, \delta}$ can be extended to a graph homomorphism~$h : S_n \to \mathcal{T}$ such that $h=h_{\partial S_n }$ on $\partial S_n$ and $h=h_{(1-2\varepsilon)n}$ on $S_{(1-2\varepsilon)n}$. This implies that for large enough~$n$
\begin{align}
 \Ent(S_n, h_{\partial S_n}) & \leq - \frac{1}{n^m} \ln \left| M_{(1-2\varepsilon)n, \delta} \right| \\
& \leq (1-2\varepsilon)^m \ent_{(1- 2\varepsilon)n} (s) + c \frac{\delta^2}{n^{m-1}} \\
& = \ent_{n} (s) + \theta (\delta) + c\frac{\delta^2}{n^{m-1}} +\theta \left( \frac{1}{n}\right), 
\end{align}
where we have used the estimate~\eqref{e_comparison_free_bc_close_to_geodesic_bc} from above and the identity 
\begin{align}
 \ent_{(1- 2 \varepsilon) n }(s) = \ent_{ n }(s) + \theta \left( \delta\right)+\theta \left( \frac{1}{n}\right),
\end{align}
which follows from Lemma~\ref{p_comparison_ent_n_ent_m} applied with the parameters $n_1= (1- 2 \varepsilon) n $ and $n_2=n$ for large enough~$n$. This verifies the estimate~\eqref{e_fixed_vs_free_bc_lower_estiamte2}.\\

The estimate~\eqref{e_fixed_vs_free_bc_upper_estiamte2} can be verified by a similar argument as was used for~\eqref{e_fixed_vs_free_bc_lower_estiamte2}. Instead of restricting~$S_n$ to a smaller box and compare~$S_n$ with~$\mathcal{H}_{(1-2\delta)n }^{\mathbf{g}}$ and~$M_{(1-2\delta)n, \varepsilon}$, one has to extend the box $S_n$ and comparing it with~$\mathcal{H}_{(1+2\delta)n }^{\mathbf{g}}$ and $M_{(1-2\delta)n, \varepsilon}$. We omit the details. 
\end{proof}
The merit of Theorem~\ref{geodesic_square} is the following: In order to show that the limit~$\lim_{n\to \infty }\ent_n (s)$ exists it suffices to show that the limit
$$\lim_{n \to \infty} \Ent(S_n, h_{\partial S_n})$$ exists. The advantage of considering~$\Ent(S_n, h_{\partial S_n})$ is that the boundary data~$h_{\partial S_n}$ is fixed and can be chosen such that~\eqref{e_bc_condition_close_geodesic} is satisfied.
For a well chosen configuration~$h_{\partial S_n}$ one could now show the existence of the limit~$\lim_{n \to \infty} \Ent(S_n, h_{\partial S_n})$ using subadditivity. However, for being self-contained let us give an alternative argument. From now on, let us fix one particular sequence of boundary data $h_{\partial S_n}$ that additionally to~\eqref{e_bc_condition_close_geodesic} also satisfies the following condition.

\begin{definition}(Periodic boundary data)\label{d_periodic_boundary_data}
Let~$h_{\partial S_n}$ denote a boundary graph homomorphism on the boundary~$\partial S_n$ of the box~$S_n$. Let~$\tilde h_{\partial S_n}$ denote the dual boundary graph homomorphism on the edge set~$\mathcal{E}_{\partial S_n}$ of $\partial S_n$ (see Definition~\ref{d_dual_graph_homomorphism}). We say that the boundary graph homomorphism $h_{\partial S_n}$ has well-periodic boundary data if the following two conditions are satisfied:
\begin{itemize}
\item If $e_{x,y} \in \mathcal{E}_{\partial S_n}$ and $e_{x+ n \vec{i}_k,y+ n \vec{i}_k} \in \mathcal{E}_{\partial S_n}$ for some~$k \in \left\{1, \ldots n \right\}$, then~$\tilde h_{\partial S_n} (e_{x,y}) = \tilde h_{\partial S_n} (e_{x+ n \vec{i}_k,y+ n \vec{i}_k}) $.\\[-0.5ex]

\item If $e_{x,y} \in \mathcal{E}_{\partial S_n}$ and $e_{x- n \vec{i}_k,y+ n \vec{i}_k} \in \mathcal{E}_{\partial S_n}$ for some~$k \in \left\{1, \ldots n \right\}$, then~$\tilde h_{\partial S_n} (e_{x,y}) = \tilde h_{\partial S_n} (e_{x+ n \vec{i}_k,y+ n \vec{i}_k}) $.
\end{itemize}
\end{definition}
The Definition~\ref{d_periodic_boundary_data} has the following simple interpretation. The dual boundary graph homomorphism~$\tilde h_{\partial S_n}$ can be understood as a coloring of the edges of the set~$\partial S_n$. Then the boundary data~$h_{\partial S_n}$ is periodic, if the coloring of one face of~$\partial S_n$ matches the coloring of the opposite face.\\

The advantage of using periodic boundary data is that one gets monotonicity of a subsequence of~$\Ent(S_n, h_{\partial S_n})$ for free.

\begin{lemma}\label{p_monotonicity_subsequence}
Under the same assumptions as in Theorem~\ref{geodesic_square}, let us consider the entropy~$\Ent(S_n,h_{\partial S_n})$. We additionally assume that the boundary graph homomorphism~$h_{\partial S_n}$ is periodic in the sense of Definition~\ref{d_periodic_boundary_data}. On the box~$S_{kn}$ we consider the boundary condition~$h_{\partial S_{kn}}$ that arises from attaching~$k$ copies of~$h_{\partial S_n}$ to each other. Then it holds that for all integers~$k \in \mathbb{N}$
\begin{align}\label{e_monotonicity_subsequence}
\Ent(S_n,h_{\partial S_n}) \geq \Ent(S_{nk},h_{\partial S_{nk}}).
\end{align} 
\end{lemma}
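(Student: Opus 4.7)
The plan is to prove the equivalent multiplicative form
\begin{align}
|M(S_{nk}, h_{\partial S_{nk}})| \;\geq\; |M(S_n, h_{\partial S_n})|^{k^m},
\end{align}
which, after taking $-\tfrac{1}{|\cdot|}\ln$, is exactly \eqref{e_monotonicity_subsequence}. I would prove this by exhibiting an injection $\Phi \colon M(S_n, h_{\partial S_n})^{k^m} \hookrightarrow M(S_{nk}, h_{\partial S_{nk}})$ obtained by tiling: decompose $S_{nk}$ into $k^m$ sub-boxes $B_{\vec i} := n\vec i + S_n$ for $\vec i \in \{0,\dots,k-1\}^m$, and given a tuple $(h_{\vec i})_{\vec i}$ of elements of $M(S_n, h_{\partial S_n})$, set
\begin{align}
\Phi\!\bigl((h_{\vec i})_{\vec i}\bigr)(x) \;=\; g_{\vec i} \cdot h_{\vec i}(x - n\vec i) \qquad \text{for } x \in B_{\vec i},
\end{align}
where $g_{\vec i} \in G = \langle \alpha_1,\dots,\alpha_d \mid \alpha_1^2 = \dots = \alpha_d^2 = e\rangle$ acts on $\mathcal T$ by the canonical left-translation from Section~\ref{s_micro_surface_tenstion}.

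The translations $g_{\vec i}$ are forced by the periodicity of $\tilde h_{\partial S_n}$. The first clause of Definition~\ref{d_periodic_boundary_data} makes the sequence of edge colors along any two parallel, $n$-translated boundary faces identical, which implies that the values of $h_{\partial S_n}$ on the two opposite faces of $\partial S_n$ in direction $j$ are related by a single tree-translation $g_j \in G$. I would then set $g_{\vec i} := g_1^{i_1}\cdots g_m^{i_m}$. With this choice, three verifications remain. Inside each $B_{\vec i}$, $\Phi$ is a graph homomorphism because $h_{\vec i}$ is and left-translation is an automorphism of $\mathcal T$. Across an interior face between $B_{\vec i}$ and $B_{\vec i + \vec e_j}$, the two expressions for $\Phi$ coincide, since by the boundary condition $h_{\vec i}|_{\partial S_n} = h_{\vec i+\vec e_j}|_{\partial S_n} = h_{\partial S_n}$ they both reduce to the single value dictated by the relation $g_{\vec i + \vec e_j} = g_{\vec i}\, g_j$. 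On $\partial S_{nk}$, $\Phi$ matches $h_{\partial S_{nk}}$ by the very definition of the latter as $k$ copies of $h_{\partial S_n}$ attached together. Injectivity of $\Phi$ is immediate from the fact that each $h_{\vec i}$ is recovered as $g_{\vec i}^{-1} \cdot \Phi|_{B_{\vec i}}$ translated back by $-n\vec i$.

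The delicate point, and the one that really uses the full strength of Definition~\ref{d_periodic_boundary_data}, is the coherence of $g_{\vec i}$. Since $G$ is non-abelian, a priori the element $g_1^{i_1}\cdots g_m^{i_m}$ depends on the order of composition, and at a corner of the tiling two different traversal orders from $\vec 0$ to $\vec i$ must produce the same tree-translation of $h_{\partial S_n}$ or the gluing fails at that corner. The second clause of Definition~\ref{d_periodic_boundary_data}, which identifies edge colorings across adjacent-direction shifts at corners, is precisely what supplies the commutation relations needed on the image of $h_{\partial S_n}$ to make $g_{\vec i}$ well defined and the tiling globally consistent. This is the main (and essentially only) step where one has to be careful; everything else is bookkeeping.
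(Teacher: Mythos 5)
Your proof is correct and takes the same tiling route as the paper's sketch, but it is more complete and in fact closes a gap. The paper's one-paragraph argument says to take \emph{a} configuration $h_{S_n}$ on $S_n$ and attach $k$ copies of it, which, read literally, gives only the injection $M(S_n, h_{\partial S_n}) \hookrightarrow M(S_{nk}, h_{\partial S_{nk}})$ and hence $|M(S_n, h_{\partial S_n})| \le |M(S_{nk}, h_{\partial S_{nk}})|$. After dividing the logarithms by the respective box sizes $n^m$ and $(nk)^m$, what the stated inequality actually requires is $|M(S_n, h_{\partial S_n})|^{k^m} \le |M(S_{nk}, h_{\partial S_{nk}})|$, which is exactly what your injection of a tuple of $k^m$ \emph{independent} configurations supplies. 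Your framework with the left translations $g$ is the right way to keep track of the gluing: tree-valued graph homomorphisms are gradient objects, determined by their dual edge-colorings plus one reference value, and the translations record the relative offsets of the blocks. The coherence issue at the $2^m$-fold corners of the tiling is a genuine subtlety worth raising, since $G$ is non-abelian. One caveat on how you resolve it: the second clause of Definition~\ref{d_periodic_boundary_data} as printed almost certainly contains a typo (its hypothesis refers to $e_{x-n\vec{i},\,y+n\vec{i}}$ while its conclusion refers to $e_{x+n\vec{i},\,y+n\vec{i}}$), so pinning the commutativity on that clause is not airtight as stated. A cleaner, less definition-dependent route is to note that, once the first clause identifies the dual colorings on opposite faces of $\partial S_n$, the single-valuedness of $h_{\partial S_n}$ at a corner of $\partial S_n$ forces the two boundary paths from the base corner to the opposite corner to read the same group element, which is precisely the commutation relation you need for the face translations; this is worth spelling out to make the corner-coherence step fully rigorous.
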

The proof of Lemma~\ref{p_monotonicity_subsequence} follows from a simple underestimation of the configurations in~$\Ent(S_{nk},h_{\partial S_{nk}})$. Because the boundary data~$ h_{\partial S_{n}}$ is periodic one can just take a configuration~$h_{S_n}$ on~$S_{n}$ and extend it to the box~$S_{n}$ by attaching~$k$ copies of~$h_{S_n}$ to each other. By construction, the resulting configuration on~$S_{nk}$ will have the correct boundary data~$h_{\partial S_{nk}}$ and therefore the estimate~\eqref{e_monotonicity_subsequence} follows automatically. We omit the details of this proof. \\

Now, we have everything that is needed for the proof of Theorem~\ref{p_existence_local_surface_tension}.

\begin{proof}[Proof of Theorem~\ref{p_existence_local_surface_tension}]

The main idea is to consider a sequence of periodic boundary data~$h_{\partial S_n}$ (see Definition~\ref{d_periodic_boundary_data})
that satisfies~\eqref{e_bc_condition_close_geodesic} and show that
\begin{align}\label{e_limit_entropy_fixed_bc_exists}
\mbox{the limit } \lim_{n \to \infty} \Ent(S_n, h_{\partial S_n}) \mbox{ exists.}
\end{align}
Then it easily follows from statement of Theorem~\ref{geodesic_square} that also 
\begin{align}
\mbox{the limit } \lim_{n \to \infty} \ent_n (s) \mbox{ exists.} 
\end{align}
This would verify the statement of Theorem~\ref{p_existence_local_surface_tension}.\\

We begin with observing that 
\begin{align}
0 \geq c_n:= \Ent(S_n, h_{\partial S_n}) \geq - \ln d.
\end{align}
The reason is that edges take at most $d$-values. Therefore, it suffices to show that the sequence~$c_n$ cannot have two distinct accumulations points~$x_1$ and~$x_2$. We argue by contradiction and assume that~$x_1$ and~$x_2$ are two accumulation points of the sequence~$e_n$ satisfying the relation $$- \ln d \leq x_1 < x_2 \leq 0.$$
Then there exists a number~$l$ such that
\begin{align}
c_{l} \leq \frac{x_1 + x_2}{2}.
\end{align}
By Lemma~\ref{p_monotonicity_subsequence}, the subsequence~$k \to c_{kl}$ is decreasing, hence for all~$k \in \mathbb{N}$
\begin{align}
 c_{kl} \leq \frac{x_1 + x_2}{2}.
\end{align}
We will now show that this implies for large enough~$l$ and~$k$ that also for all~$n \geq kl$
\begin{equation}\label{e_accumulation_points}
 c_{n} \leq \frac{x_1 + x_2}{2} + \varepsilon,
\end{equation}
for some small constant~$\varepsilon >0$. This would be a contradiction to the assumption that~$x_1$ and~$x_2$ are accumulation points of the sequence~$c_n$ and therefore would verify~\eqref{e_limit_entropy_fixed_bc_exists}.\\

Hence, it is left to deduce the estimate~\eqref{e_accumulation_points}. Let~$n \geq lk$. Then we know that we can write
\begin{align}
 n = \tilde k l + v,
\end{align} 
where~$\tilde k \geq k$ and~$0 \leq v \leq l$. By using a combination of Theorem~\ref{geodesic_square} and Lemma~\ref{p_comparison_ent_n_ent_m} with parameters $n_1=\tilde k l$ and $n_2=\tilde k l+v$, it follows that
\begin{align}
 |c_n - c_{\tilde k l}| & \leq \left| \ent_{\tilde k l+ v}(s) - \ent_{\tilde k l} \right| + \theta \left( \frac{1}{\tilde k l} \right) + \theta (\delta) \\
& \leq \theta \left( \frac{1}{k } \right) + \theta (\delta).
\end{align}
Hence, we see that if choosing~$k$ large enough and~$\delta$ small enough that
\begin{align}
 c_n \leq c_{\tilde k l} + \varepsilon \leq \frac{x_1+ x_2}{2} + \varepsilon,
\end{align}
which verifies~\eqref{e_limit_entropy_fixed_bc_exists} and closes the argument. 
\end{proof}

\subsection{Convexity of the local surface tension }\label{s_convexity_local_surface_tension}

Once the existence of the local surface tension~$\ent (s)$ is established it is natural to ask if the local surface tension is convex. This is the case in our model.

\begin{theorem}\label{p_convexity_local_surface_tension}
The local surface tension~$\ent (s)$ given by Definition~\ref{e_def_micro_surface_tension} is convex in every coordinate. In particular, this implies that $\ent (s)$ is convex.
\end{theorem}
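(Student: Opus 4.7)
The strategy is to prove midpoint convexity of $\ent$ in each slope coordinate by a cut-and-paste surgery at the microscopic scale; arbitrary rational and then real convex combinations follow by iteration and the continuity of $\ent$ granted by Theorem~\ref{p_existence_local_surface_tension}. Fix $i \in \{1,\dots,m\}$ and two slopes $s^0, s^1$ that coincide on all coordinates except the $i$-th, and set $s^\mu := \tfrac{1}{2}(s^0 + s^1)$. The plan is to construct an explicit family of $2n$-invariant graph homomorphisms of slope $s^\mu$ by combining $n$-invariant homomorphisms of slopes $s^0$ and $s^1$, and to read the convexity inequality off from a count.

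I partition the fundamental domain $S_{2n} = [0, 2n)^m$ along the hyperplane $\{x_i = n\}$ into two slabs $A_0$ and $A_1$; each slab decomposes further into $2^{m-1}$ cubes of side $n$ arranged along the directions orthogonal to $\vec{i}$. In each sub-cube of $A_0$ I place an independently chosen element of $\mathcal{H}^{\mathbf{g}}_n(s^0)$, and in each sub-cube of $A_1$ an independent element of $\mathcal{H}^{\mathbf{g}}_n(s^1)$. By the choice of widths the assembled map has slope $s^\mu$ in direction $i$, since the growth across $[0,2n)$ is $ns^0_i+ns^1_i = 2ns^\mu_i$, and slope $s^\mu_j$ in every other direction because $s^0_j = s^1_j = s^\mu_j$.

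The piecewise map fails to be a graph homomorphism across the interfaces between adjacent sub-cubes and across the central hyperplane $\{x_i = n\}$. I excise buffer slabs of width $\varepsilon n$ around each of these interfaces and reconstruct the map inside each buffer via the discrete Kirszbraun theorem (Theorem~\ref{Kirszbraun}). The Kirszbraun extension exists provided the traces of the adjacent pieces are within a tree distance comparable to the buffer width; by the concentration inequality (Theorem~\ref{main_concentration}), the proportion of configuration tuples violating this compatibility condition is exponentially small on the $(2n)^{m-1}$ interface scale. Counting yields an estimate of the form
\[
|\mathcal{H}^{\mathbf{g}}_{2n}(s^\mu)| \;\geq\; e^{-\theta(\varepsilon)\,(2n)^m}\,|\mathcal{H}^{\mathbf{g}}_n(s^0)|^{2^{m-1}}\,|\mathcal{H}^{\mathbf{g}}_n(s^1)|^{2^{m-1}},
\]
and taking $-\log$, normalizing by the volume of $S_{2n}$, sending $n \to \infty$ and then $\varepsilon \downarrow 0$ gives the midpoint inequality $\ent(s^\mu) \leq \tfrac{1}{2}(\ent(s^0)+\ent(s^1))$.

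The main difficulty will be balancing the buffer: its width $\varepsilon n$ must exceed the typical fluctuation scale of a random graph homomorphism so that Kirszbraun succeeds for a dominant fraction of tuples, yet its total volume must contribute only $o((2n)^m)$ to the count so as not to spoil the midpoint inequality. A secondary technical point is to ensure that the map from configuration tuples to assembled homomorphisms is essentially injective on the ``good'' set selected by the concentration inequality, so that no hidden multiplicity inflates the lower bound in the wrong direction. Both requirements should follow from the constructive nature of the discrete Kirszbraun theorem, combined with the sharpness of the concentration estimate in Theorem~\ref{main_concentration}; the matching of these two scales is precisely where the non-integrable nature of the model forces one to substitute the surgery argument for an exact combinatorial identity.
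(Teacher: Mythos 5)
Your proposal is correct and captures the same underlying idea as the paper: prove midpoint convexity in a fixed coordinate by a cut-and-paste surgery, splitting a large box into sub-boxes that carry the two slopes $s^0$ and $s^1$, using the Kirszbraun theorem to glue and the concentration inequality to guarantee that gluing is possible for the typical tuple, then passing to the limit. Where the paper differs is purely in packaging: instead of re-deriving the buffer/matching argument from scratch as you sketch, it argues by contradiction, introduces a ``buckled'' boundary homomorphism $h_b$ on $S_n$ whose slope is $s_{1,0}$ on one half and $s_{1,2}$ on the other, and then invokes Theorem~\ref{geodesic_square} (equivalence of free and fixed boundary entropy on a square) to reduce the count on each sub-box directly to $\ent_{n/2}$. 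This saves you the buffer-and-injectivity bookkeeping you flag as the ``main difficulty'': Theorem~\ref{geodesic_square} already encapsulates exactly the Kirszbraun + concentration interplay needed to match traces across an interface at negligible entropic cost, so you should appeal to it rather than redo that work. One more point worth making explicit in your write-up: passing from midpoint convexity in each coordinate to full convexity uses the bound $-\ln d \le \ent(s)\le 0$ (midpoint-convex plus bounded implies convex); neither the continuity you cite nor Theorem~\ref{p_existence_local_surface_tension} is the right justification for that step.
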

As in the proof of the existence of the local surface tension~$\ent(s)$, the main tools for the proof of Theorem~\ref{p_convexity_local_surface_tension} are the Kirszbraun theorem (see Theorem~\ref{Kirszbraun}) and the concentration inequality (see Theorem~\ref{main_concentration_section_3}). 

\begin{proof}[Proof of Theorem~\ref{p_convexity_local_surface_tension}]
We will show that the local surface tension~$\ent(s)$ is convex in every coordinate which yields that~$\ent(s)$ is convex. By symmetry it suffices to show that~$\ent (s)$ is convex in the first coordinate. For convenience, we only give the argument for the case~$m=2$. The argument for the general case is similar. For that reason let~$s_2$ be fixed. We argue by contradiction. Hence, let us suppose that~$\ent(s)$ is not convex in the first coordinate. Then there are numbers~$s_{1,0} <s_{1,1} < s_{1,2}$ such that
\begin{align}
 \frac{1}{2} s_{1,0} + \frac{1}{2} s_{1,2} = s_{1,1} 
\end{align}
and
\begin{align} \label{e_convexity_nonconvex_assumption}
 \ent (s_{1,1}, s_2) > \frac{1}{2}\ent(s_{1,0},s_2) + \frac{1}{2} \ent(s_{1,2},s_2).
\end{align}
 For an integer~$n$ we consider the microscopic entropy~$\ent_{n}(s_{1,1}, s_2)$ given by Definition~\ref{e_def_micro_surface_tension} i.e.
\begin{align}
 \ent_{n} (s_{1,1},s_2) : = - \frac{1}{n^2} \ln \left|\mathcal{H}_{n}^{\mathbf{g}} (s_{1,1},s_2) \right|,
\end{align}
 where~$\mathbf{g}$ denotes a two-sided geodesic in~$\mathcal{T}$. We want to recall that elements~$h \in \mathcal{H}_{n}^{\mathbf{g}} (s_{1,1})$ are graph homomorphisms~$h: S_n \to \mathcal{T}$, where~$S_n \subset \mathbb{Z}^2$ denotes the~$n \times n $ box
\begin{align}
 S_n : = \left\{ 0, \ldots, n-1\right\}^2.
\end{align}
We assume that without loss of generality that~$n$ is odd. The idea is to split up the box~$S_n$ into four boxes of side length~$\frac{n}{2}$ i.e.
\begin{align}
S_{n} = B_1 \cup B_2 \cup B_3 \cup B_4.
\end{align}
Down below, we will compare the number of graph homomorphisms in~$\mathcal{H}_{n}^{\mathbf{g}} (s_{1,1},s_2)$ to the number the number of graph homomorphisms on each sub-box~$B_i$ with a fixed \emph{buckled} boundary (see Figure~\ref{f_convexity_block_decomposition} and Figure~\ref{f_h_b_illustration}). For that purpose let~$h_{b} \in \mathcal{H}_{n}^{\mathbf{g}} (s_{1,1}, s_2)$ be a graph homomorphism such that
\begin{itemize}
\item for all~$x \in \partial S_n$ with~$0 \leq x_1 \leq \frac{n-1}{2}$ and~$x_2 \in \left\{ 0, \frac{n-1}{2} , n-1 \right\}$ it holds
\begin{align}
 d_{\mathcal{T}} \left( h_b(x), \mathbf{g} \left( \lfloor s_{1,0} x_1 + s_2 x_2 \rfloor \right) \leq \delta \frac{n}{2} \right) ;
\end{align}
 \item for all~$x \in \partial S_n$ with~$\frac{n}{2} \leq x_1 \leq n-1$ and~$x_2 \in \left\{ 0, \frac{n-1}{2} , n-1 \right\}$ it holds
\begin{align}
 d_{\mathcal{T}} \left( h_b(x), \mathbf{g} \left( \lfloor s_{1,0} \frac{n-1}{2} + s_{1,2} \left( x_1 - \frac{n-1}{2} \right)+ s_2 x_2 \rfloor \right) \leq \delta \frac{n}{2} \right);
\end{align}
\item for all~$x \in \partial S_n$ with~$x_1 \in \left\{ 0, \frac{n-1}{2} \right\} $ and~$0 \leq x_2 \leq n-1$ it holds
\begin{align}
 d_{\mathcal{T}} \left( h_b(x), \mathbf{g} \left( \lfloor s_{1,0} x_1 + s_2 x_2 \right) \rfloor \leq \delta \frac{n}{2} \right) ; \mbox{and}
\end{align}
 \item for all~$x \in \partial S_n$ with~$x_1 = n-1$ and~$0 \leq x_2 \leq n-1$ it holds
\begin{align}
 d_{\mathcal{T}} \left( h_b(x), \mathbf{g} \left( \lfloor s_{1,1} n + s_2 x_2 \rfloor \right) \leq \delta \frac{n}{2} \right).
\end{align}
\end{itemize}
The role of the graph homomorphism~$h_b$ is to fix the boundary condition on each box~$B_i$,~$i=1, \ldots, 4$. Let us introduce the ad-hoc notation
\begin{align}
U := \left\{ h \in \mathcal{H}_{n}^{\mathbf{g}} (s_{1,1},s_2) \ | \ h(x) = h_b(x) \ \forall x \in \bigcup_{i=1}^4 \partial B_i \right\}.
\end{align}
We underestimate the number of graph homomorphisms by conditioning that a graph homomorphism~$h$ has to coincide with~$h_b$ on the boundary of~$B_i$. This yields
\begin{align}
 \ent_{n} (s_{1,1},s_2) & = - \frac{1}{n^2} \ln |\mathcal{H}_{n}^{\mathbf{g}} (s_{1,1},s_2)| \leq - \frac{1}{n^2} \ln |U|. 
\end{align}
We observe that for any element~$h \in U$ the values on the boundary~$\partial B_i$,~$i =1 , \ldots, 4$, are fixed. Therefore the values of~$h \in U$ on the distinct boxes~$B_i$,~$i=1, \ldots, 4$ are independent. This implies that
\begin{align}
 - \frac{1}{n^2} \ln |U| & = \left( \frac{n}{2}\right)^2 \ \frac{1}{n^2} \sum_{i=1}^4 \Ent(B_i, h_{b|\partial B_i}) . 
\end{align}
Now, we observe that by Theorem~\ref{geodesic_square} it holds that
\begin{align}
 \Ent(B_1, h_{b|\partial B_1})= \Ent(B_3, h_{b|\partial B_3}) = \ent_{\frac{n}{2}} (s_{1,0},s_2) + o(\delta) + o \left(\frac{1}{n} \right)
\end{align}
and
\begin{align}
 \Ent(B_2, h_{b|\partial B_2})= \Ent(B_4, h_{b|\partial B_4}) = \ent_{\frac{n}{2}} (s_{1,2},s_2) + o(\delta) + o \left(\frac{1}{n} \right).
\end{align}
The last estimates in combination with the fact that (cf.~Theorem~\ref{p_existence_local_surface_tension})
\begin{align}
 \ent_n (s_1, s_2) = \ent (s_1, s_2) + o \left( \frac{1}{n} \right)
\end{align}
implies that
\begin{align}
 \ent (s_{1,1},s_2) & \leq \frac{1}{2} \ent (s_{1,0},s_2) + \frac{1}{2} \ent (s_{1,2},s_2) + o(\delta) + o \left(\frac{1}{n} \right),
\end{align}
which contradicts~\eqref{e_convexity_nonconvex_assumption} by choosing~$\delta>0$ small enough and~$n$ large enough and therefore closes the argument.
\end{proof}

\begin{figure}




 \includegraphics[width=0.4\textwidth]{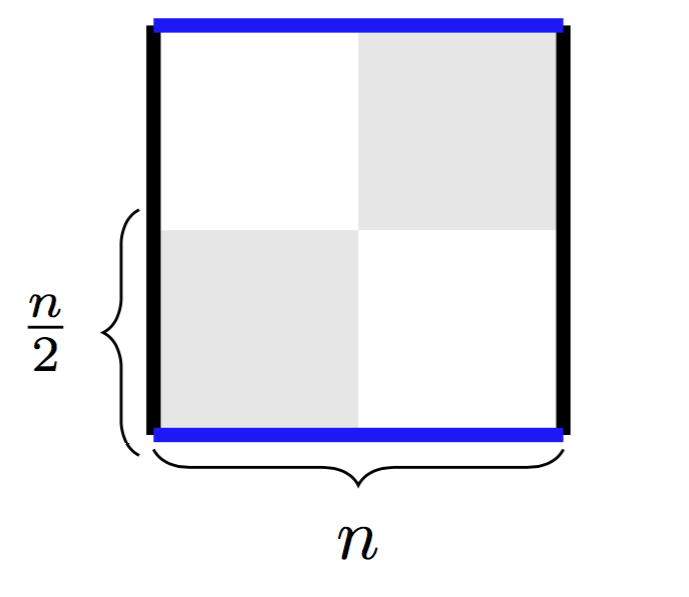}

\caption{Schematic drawing of a typical graph homomorphism~$h\in \mathcal{H}_{n}^{\mathbf{g}}(s_1, s_2)$ on the block~$S_n$. A blue line means that the graph homomorphism~$h$ travels with speed~$s_1$ on~$\mathbf{g}$ and a black line means that~$h$ travels with speed~$s_2$.
}\label{f_convexity_block_decomposition}
 
\end{figure}

\begin{figure}







 \includegraphics[width=0.42\textwidth]{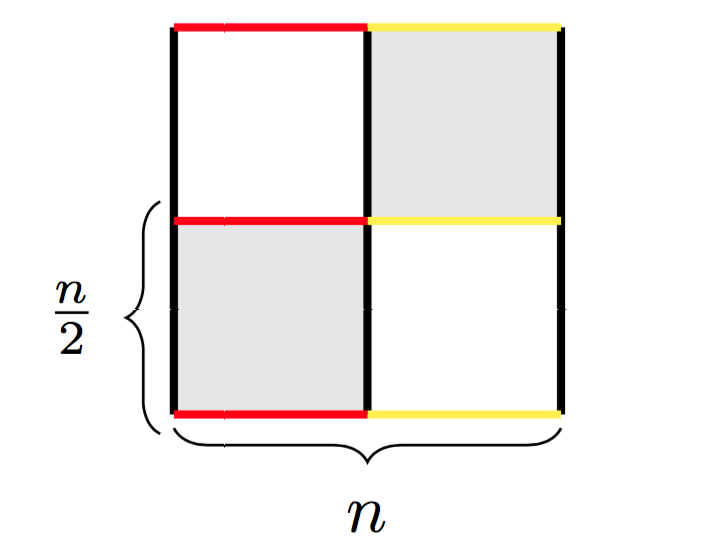}

\caption{Schematic drawing of a graph homomorphism~$h_b$. A red line means that the graph homomorphism~$h$ travels with speed~$s_{1,0}$ on~$\mathbf{g}$. A yellow line means that the graph homomorphism~$h$ travels with speed~$s_{1,2}$. A black line means that~$h$ travels with speed~$s_2$.
}\label{f_h_b_illustration}
\end{figure}

\section{A Kirszbraun theorem and a concentration inequality }\label{s_technical_observations}
In this section, we provide the technical tools that are needed in our proof of the variational principle to overcome the difficulty that our model is not integrable. In Section~\ref{s_kirszbraun} we deduce the Kirszbraun theorem in regular trees. In Section~\ref{s_concentration} we deduce the concentration inequality for graph homomorphisms. Those tools were used in Section~\ref{s_local_surface_tension} to show the existence and convexity of the local surface tension~$\ent(s)$ (see Theorem~\ref{p_existence_local_surface_tension} and Theorem~\ref{p_convexity_local_surface_tension}) and the equivalence of fixed and free boundary conditions (see Theorem~\ref{geodesic_square}). Those tools are also the technical foundation to derive the existence of a continuum of shift-invariant ergodic gradient Gibbs measures in Section~\ref{ergodic}.

\subsection{A Kirszbraun theorem for graph homomorphisms}\label{s_kirszbraun}

For continuous metrics, Kirszbraun theorems state that under the right conditions a $k$-Lipschitz function defined on a subset of a metric space can be extended to the whole space (cf.~\cite{Kir34,Val43,Sch69}). The goal of this section is to show that such theorems also exist for various spaces of discrete functions. We also derive a corollary statement of the Kirszbraun theorem which will be important in our proof on the variational principle. We only consider the special case of graph homomorphisms from $\Z^m$ to a $d$-regular tree for the convenience of the reader. The concepts of this section are quite universal and certainly could be applied to more general situations. \\


We recall the Kirszbraun theorem for graphs stated in Theorem~\ref{Kirszbraun}.

\begin{theorem*}[Kirszbraun theorem for graphs]
	Let $\Lambda$ be a connected region of $\Z^m$, $S$ be a subset of $\Lambda$ and $\bar{h}: S \rightarrow \mathcal{T}$ be a graph homomorphism which conserves the parity. There exists a graph homomorphism $h: \Lambda \rightarrow \mathcal{T}$ such that $h = \bar{h}$ on $S$ if and only if for all $x,y$ in $S$ 
	\begin{equation}
	\label{eq:Kirszbraun}
	d_{\mathcal{T}}(\bar{h}(x),\bar{h}(y)) \leq d_{\Lambda}(x,y).
	\end{equation}
\end{theorem*}


\begin{remark} The parity condition in Theorem~\ref{Kirszbraun} is necessary. Indeed, let us consider a situation where \begin{align*}
	d_{\Lambda}(x,y) =2
	\end{align*}
	and
	\begin{align*}
	d_{\mathcal{T}}(\bar{h}(x),\bar{h}(y))=1 .
	\end{align*}
	Then it follows that the condition~\eqref{eq:Kirszbraun} is satisfied but there cannot be an extension~$h$ of~$\bar h$. Indeed, the graph homomorphism~$\bar h$ in this example is violating the parity condition.
\end{remark}

For the proof of Theorem~\ref{Kirszbraun} we need the following observation which states that once the image of a single point is fixed it is always possible to build a graph homomorphism that goes as fast as possible in one direction of a geodesic in the tree.

\begin{lemma}
	\label{maximal_homomorphisms}
	Let $\Lambda$ be a connected region of $\mathbb{Z}^m$, $x$ be a point in $\Lambda$, $w$ be a vertex of $\mathcal{T}$ and $\mathbf g= \{v_0=\omega,v_1,...\}$ be a one-sided geodesic in $\tree$ starting at~$w$. The map $h_x^{w}:\Lambda \rightarrow \mathcal{T}$ given by $h_x^{w}(y)=v_{d_{\Lambda}(x,y)}$ is a graph homomorphism.
\end{lemma}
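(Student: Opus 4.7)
The plan is to verify the graph homomorphism property directly from the definition $h_x^{w}(y) = v_{d(x,y)}$, where $d$ denotes the graph distance in $\Lambda$. Pick any two neighbors $y,y' \in \Lambda$ in $\mathbb{Z}^m$; since $\Lambda$ is a connected (induced) subgraph of $\mathbb{Z}^m$ containing the edge between them, we have $d(y,y')=1$. The goal is to show that $v_{d(x,y)}$ and $v_{d(x,y')}$ are neighbors in $\mathcal{T}$.

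First, I would record the two elementary facts about the graph distance in $\Lambda$ that drive the argument. Triangle inequality gives $|d(x,y) - d(x,y')| \leq d(y,y') = 1$. For the parity input, $\mathbb{Z}^m$ is bipartite with respect to the standard two-coloring and $\Lambda$ inherits that bipartition; since $y$ and $y'$ lie in opposite parity classes, any walk in $\Lambda$ from $x$ to $y$ and any walk from $x$ to $y'$ have lengths of opposite parity, so $d(x,y)$ and $d(x,y')$ have opposite parities. Combining the inequality with the parity constraint forces $|d(x,y) - d(x,y')| = 1$ exactly.

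Having established this, the conclusion is immediate. The sequence $p = \{v_0, v_1, \ldots\}$ is a geodesic in $\mathcal{T}$, so $v_i$ and $v_{i+1}$ are neighbors for every $i$. Hence $v_{d(x,y)} \sim v_{d(x,y')}$ in $\mathcal{T}$, i.e.\ $d_{\mathcal{T}}(h_x^{w}(y),h_x^{w}(y')) = 1$, which is precisely the graph homomorphism condition.

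I do not expect any real obstacle here: the argument is a straight bipartiteness plus triangle inequality computation. The only bookkeeping point is to make sure the vertex $v_{d(x,y)}$ is defined for every $y \in \Lambda$. If $\Lambda$ is finite this is automatic by choosing $w_0$ (equivalently $k$) at distance at least $\operatorname{diam}(\Lambda)+d_{\mathcal{T}}(x,w)$ from $w$, and in the infinite picture one reads $w_0$ as a boundary point $\infty_{\mathbf{g}} \in \partial \mathcal{T}$ so that $p$ becomes an infinite one-sided geodesic and $v_j$ exists for every $j \geq 0$. Either reading leaves the verification above unchanged.
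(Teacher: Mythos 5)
Your argument is the same as the paper's: the paper observes that $h_x^{w}$ is $1$-Lipschitz (your triangle-inequality step) and that bipartiteness of $\mathbb{Z}^m$ forces neighbors to have images at distance exactly one (your parity step). You merely spell out the two facts and their combination more explicitly, and add a sensible remark about $w_0$ being far enough (or a boundary point) so that $v_{d(x,y)}$ is always defined.
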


\begin{proof}
	The function $h_x^{w}$ is $1$-Lipschitz since the graph distance is $1$-Lipschitz. Moreover two neighbors cannot have the same image because for bipartite graphs, the parity of the graph distance to a single point depends on the parity of the vertex. Therefore $h_x^{w}$ is a graph homomorphism.
\end{proof}

For the proof of Theorem~\ref{Kirszbraun} let us introduce the natural analogue of the norm of~$|\cdot|$ on a tree, which we call \emph{depth}.
\begin{definition}[Depth on a tree]\label{d_depth_on_a_tree}
	Let $\mathbf{g}$ be a two-sided geodesic of $\tree$ and let~$\pm \infty_{\mathbf{g}}$ denote the boundary points of~$\mathbf{g}$. The depth associated to the two-sided geodesic~$\mathbf{g}$ is given by the unique function $|\cdot| : \tree \to \mathcal{Z}$ such that $|\mathbf{r}|=0$ and for nearest neighbors~$v \sim w \in \mathcal{T}$
	\begin{align}
	|v|=|w| + \begin{cases} 
	-1 , & \mbox{if $v$ is closer to~$-\infty_{\mathbf{g}}$ than~$w$,} \\
	1 , & \mbox{if $w$ is closer to~$-\infty_{\mathbf{g}}$ than~$v$.} \\
	\end{cases}
	\end{align}
	We want to note that the depth can be negative. On the set~$\mathcal{H}_n^{\mathbf{g}}(s)$ we always consider the depth associated with the two-sided geodesic~$\mathbf{g}$. For all other spaces, either we specify which two-sided geodesic~$\mathbf{g}$ is used for defining~$|\cdot|$, or an a-priori, arbitrary but fixed, two-sided geodesic~$\mathbf{g}$ is used.
\end{definition}

Let us now turn to the proof of Theorem~\ref{Kirszbraun}.

\begin{proof}[Proof of Theorem~\ref{Kirszbraun}]
	The condition \eqref{eq:Kirszbraun} is clearly necessary since a graph homomorphism is $1$-Lipschitz. Suppose now that \eqref{eq:Kirszbraun} holds. In order to prove Theorem \ref{Kirszbraun}, we only need to construct a graph homomorphism ~$h:\Lambda \to \mathcal{T}$ such that $h= \bar h$ on $S$. For that purpose, let us fix an arbitrary two-sided geodesic $\mathbf{g}$ in $\T$ to which we associate the corresponding depth function.
	For $y \in \Lambda$ we define $h(y)$ in the following way: For $x \in S$ let $h_x^{\bar{h}(x)}(y)$ as in Lemma \ref{maximal_homomorphisms} where the one-sided geodesic goes from $\bar{h}(x)$ towards $-\infty_{\mathbf{g}}$ and  find the vertex~$\tilde x \in S$ such that
	\begin{align}\label{e_hdef}
	|h_{\tilde x}^{\bar{h}(\tilde x)}(y)| = \max_{x \in S}|h_x^{\bar{h}(x)}(y)|.
	\end{align}
	Then we set
	\begin{align*}
	h(y) = h_{\tilde x}^{\bar{h}(\tilde x)}(y).
	\end{align*}

	We will show that the function $h: \Lambda \to \mathcal T$ is well defined, that $h=\bar{h}$ on $S$ and that $h$ is a graph homomorphism. We start with deducing that $h$ is well defined. The fact that~$h$ is well defined will follow from the following observation: If there are $y \in \Lambda$ and $x_1,x_2 \in S$ such that $|h_{x_1}^{\bar{h}(x_1)}(y)|=|h_{x_2}^{\bar{h}(x_2)}(y)|$ then 
	\begin{align} \label{e_aux_Kirszbraun_well_defined}
	h_{x_1}^{\bar{h}(x_1)}(y)=h_{x_2}^{\bar{h}(x_2)}(y).
	\end{align}
	Therefore, let us deduce now the statement~\eqref{e_aux_Kirszbraun_well_defined}. We can assume without loss of generality that~$x_1 \neq x_2$. By definition of~$h_{x_i}^{\bar{h}(x_i)}$ it holds
	\[
	d_{\Lambda}(x_i,y)=d_{\T}(\bar{h}(x_i), h_{x_i}^{\bar{h}(x_i)}(y)).
	\]
	Using this fact, the subadditivity of the graph distance and \eqref{eq:Kirszbraun} yields that
	\begin{align} 
	d_{\tree}(\bar{h}(x_1),\bar{h}(x_2)) & \leq d_{\Lambda}(x_1,x_2) \\
	& \leq d_{\Lambda}(x_1,y)+d_{\Lambda}(y,x_2) \\
	& = d_{\T}(\bar{h}(x_1), h_{x_1}^{\bar{h}(x_1)}(y))+d_{\T}(\bar{h}(x_2), h_{x_2}^{\bar{h}(x_2)}(y)). \label{eq:distance}
	\end{align}
	Now, let $v \in \mathcal{T}$ be the unique vertex on the geodesic path from $\bar{h}(x_1)$ to $-\infty_{\mathbf{g}}$ such that
	\begin{equation} 
	\label{eq:midpoint}
	d_{\tree}(\bar{h}(x_1),\bar{h}(x_2)) = d_{\tree}(\bar{h}(x_1),v)+d_{\tree}(v,\bar{h}(x_2)).
	\end{equation}
	Combining \eqref{eq:distance} and \eqref{eq:midpoint} gives 
	\[
	d_{\tree}(\bar{h}(x_1),v)+d_{\tree}(v, \bar{h}(x_2)) \leq d_{\T}(\bar{h}(x_1), h_{x_1}^{\bar{h}(x_1)}(y))+d_{\T}(\bar{h}(x_2), h_{x_2}^{\bar{h}(x_2)}(y)).
	\]
	Thus either 
	\begin{align}\label{e_kirszbraun_well_defineedness_alternative_1}
	d_{\tree}(\bar{h}(x_1),v) \leq d_{\T}(\bar{h}(x_1), h_{x_1}^{\bar{h}(x_1)}(y))
	\end{align}
	or
	\begin{align}\label{e_kirszbraun_well_defineedness_alternative_2}
	d_{\tree}(\bar{h}(x_2),v) \leq d_{\T}(\bar{h}(x_2), h_{x_2}^{\bar{h}(x_2)}(y)).
	\end{align}
	Due to the tree structure, all the vertices $w$ on one geodesic path between $\{\bar{h}(x_i)\}_{i =1,2}$ and $-\infty_{\mathbf{g}}$ and such that $ d_{\tree}(\bar{h}(x_i),v) \leq d_{\tree}(\bar{h}(x_i),w)$ must also be on the other geodesic path. Hence due to~\eqref{e_kirszbraun_well_defineedness_alternative_1} and~\eqref{e_kirszbraun_well_defineedness_alternative_2} at least one of the $h_{x_i}^{\bar{h}(x_i)}(y)$ is on both geodesic paths to~$-\infty_{\mathbf{g}}$. Since the depth is one-to-one on those paths, the only way that $|h_{x_1}^{\bar{h}(x_1)}(y)|=|h_{x_2}^{\bar{h}(x_2)}(y)|$ is if $h_{x_1}^{\bar{h}(x_1)}(y)=h_{x_2}^{\bar{h}(x_2)}(y)$ which deduces~\eqref{e_aux_Kirszbraun_well_defined}.
	\\

	We prove now that $h= \bar{h}$ on $S$.
	For any pair of points $x_1,x_2 \in S$, we know from the definition of the map $h_{x_2}^{\bar h (x_2)} $ that
	\begin{align} \label{e_aux_identity_Kirszbraun}
	d_{\mathcal{T}} (\bar h (x_2), h_{x_2}^{\bar h (x_2)} (x_1) ) = d_{\Lambda}(x_2,x_1).
	\end{align}
	Thus it follows from the definition of the depth that
	\begin{eqnarray}
	|h_{x_2}^{\bar{h}(x_2)}(x_1)| & = & |\bar{h}(x_2)|- d_{\T}(\bar{h}(x_2), h_{x_2}^{\bar{h}(x_2)}(x_1)) \\
	& = & |\bar{h}(x_2)|- d_{\Lambda}(x_2,x_1) \\
	& \leq & |\bar{h}(x_2)|-(|\bar{h}(x_2)|-|\bar{h}(x_1)|) \\
	& \leq & |\bar{h}(x_1)|.
	\end{eqnarray}
	This means that, at the point $x_1$, the maximal argument in~\eqref{e_hdef} must be reached for $x_1$ and thus $h(x_1)=\bar{h}(x_1)$.\\
	
	Now, we will show that~$h$ is a graph homomorphism. For this purpose let~$y \sim z\in \Lambda$ be nearest neighbors. We have to show that this implies~$d_{\mathcal{T}}\left( h(y), h(x) \right) =1$. We distinguish two cases. In the first case we assume that there is~$x \in S$ such that
	\begin{align*}
	h (y)= h_{x}^{\bar h (x)} (y) \quad \mbox{and} \quad h (z)= h_{x}^{\bar h (x)} (z). 
	\end{align*}
	In this case, the fact that~$d_{\mathcal{T}}\left( h(y), h(z) \right) =1$ directly follows from Lemma~\ref{maximal_homomorphisms} which states that all the $h_x^{h(x)}$ are graph homomorphisms themselves. \newline
	Let us now consider the second case where we assume that there exist~$x_1, x_2 \in S$, $x_1 \neq x_2$ such that
	\begin{align*}
	h (y)= h_{x_1}^{\bar h (x_1)} (y) \quad \mbox{and} \quad h (z)= h_{x_2}^{\bar h (x_2)} (z). 
	\end{align*}
	Then, we have to show that
	\begin{align}
	d_{\mathcal{T}} \left(h_{x_1}^{\bar h (x_1)} (y) , h_{x_2}^{\bar h (x_2)} (z)\right) =1.
	\end{align}
	In this situation we will show below that
	either
	\begin{align}\label{e_Kirszbraun_Homo_aux_1}
	h_{x_1}^{\bar h (x_1)} (y)=h_{x_2}^{\bar h (x_2)} (y) \quad \mbox{or} \quad h_{x_1}^{\bar h (x_1)} (z)=h_{x_2}^{\bar h (x_2)} (z).
	\end{align}
	Indeed, if this statement is true we have reduced the second case to the first case and the argument is complete.\newline 
	Let us now deduce the statement~\eqref{e_Kirszbraun_Homo_aux_1}. By the definition of the map~$h$ it must hold that
	\begin{align*}
	\left| |h_{x_1}^{\bar{h} (x_1)} (z)| - |h_{x_2}^{\bar{h} (x_2)} (y)| \right| \leq 2,
	\end{align*}
	else one could easily construct a contradiction. The case
	\begin{align*}
	|h_{x_1}^{\bar{h} (x_1)} (z)| = |h_{x_2}^{\bar{h} (x_2)} (y)| 
	\end{align*}
	cannot happen. Else one would get a contradiction to the fact that~$\bar h$ conserves the parity. Hence it follows that
	\begin{align*}
	|h_{x_1}^{\bar h (x_1)} (z)| = |h_{x_2}^{\bar h (x_2)} (y)| \pm 1. 
	\end{align*}
	Hence we have that either 
	\begin{align*}
	|h_{x_1}^{\bar h (x_1)} (z)| = |h_{x_2}^{\bar h (x_2)} (z)|
	\end{align*}
	or
	\begin{align*}
	|h_{x_1}^{\bar h (x_1)} (y)| = |h_{x_2}^{\bar h (x_2)} (y)|.
	\end{align*}
	Now, we are in the same situation as when arguing that~$h$ is well defined, and can deduce the in the same way the desired statement (see~\eqref{e_aux_Kirszbraun_well_defined}).
\end{proof}
We will now prove an important corollary of the Kirszbraun theorem for graphs.

\begin{corollary}\label{p_corollary_kirszbraun}
	Let $\Lambda$ be a simply connected region of $\Z^m$ and let $h: \Lambda \to \mathcal{T}$ and $g: \Lambda \to \mathcal{T}$ be two graph homomorphisms. 
	Then there exists a graph homomorphism $\tilde{g}: \Lambda \to \mathcal{T}$ such that for all $x \in \partial \Lambda: \tilde{g}(x)=g(x)$ and
	\begin{equation}\label{e_boundary_bound}
	\sup_{x \in \Lambda}d_{\mathcal{T}}(\tilde{g}(x),h(x)) \leq \sup_{x \in \partial \Lambda}d_{\mathcal{T}}(g(x),h(x)) 
	\end{equation}
\end{corollary}

\begin{proof}[Proof of Corollary~\ref{p_corollary_kirszbraun}]
	We argue by contradiction. Let us set $k=\sup_{x \in \partial \Lambda}d_{\mathcal{T}}(g(x),h(x))$ and suppose that~\eqref{e_boundary_bound} does not hold. We consider a graph homomorphism $\tilde{g}: \Lambda \to \mathcal{T}$ which maximizes the number of vertices in $\Lambda$ for which
	$
	d_{\mathcal{T}}(\tilde{g}(x),h(x)) \leq k
	$
	among all graph homomorphisms which are equal to $g$ on $\partial \Lambda$. Since~\eqref{e_boundary_bound} does not hold we know that there exist at least one maximal connected component $\mathcal{C} \subset \Lambda$ on which
	$
	d_{\mathcal{T}}(\tilde{g}(x),h(x)) > k.
	$
	Moreover since $\mathcal{C}$ is maximal we also know that for all $x \in \partial^{out} \mathcal{C}$ the outer boundary of $\mathcal{C}$:
	$
	d_{\mathcal{T}}(\tilde{g}(x),h(x)) \leq k.
	$
	Now choose a point $x_0 \in \mathcal{C}$. Since a graph homomorphism is $1$-Lipschitz we know that
	\begin{equation}
	h(x_0) \in \cap_{x \in \partial^{out} C} \, \mathcal{B}_{\mathcal{T}}(h(x),d_{\Lambda}(x,x_0)),
	\end{equation}
	where $\mathcal{B}_{\mathcal{T}}(y,r)$ denote the ball of radius $r$ around $y$ in $\mathcal{T}$. In particular we also have that
	\[
	h(x_0) \in \cap_{x \in \partial^{out} C} \, \mathcal{B}_{\mathcal{T}}(\tilde{g}(x),d_{\Lambda}(x,x_0)+k).
	\]
	We also notice that $\cap_{x \in \partial^{out} C}\mathcal{B}_{\mathcal{T}}(\tilde{g}(x),d_{\Lambda}(x,x_0))$ necessarily is non-empty since it must contain $\tilde{g}(x_0)$. Combining the last two observations we obtain that 
	$$d_{\mathcal{T}}(h(x_0),\cap_{x \in \partial^{out} C}\mathcal{B}_{\mathcal{T}}(\tilde{g}(x),d_{\Lambda}(x,x_0)) ) \leq k.$$
	 Now, we choose any vertex $v \in \mathcal{T}$ within distance $k$ of $h(x_0)$ in the set $\cap_{x \in \partial^{out} C} \, \mathcal{B}_{\mathcal{T}}\left(\tilde{g}(x),d_{\Lambda}(x,x_0)\right)$. The Kirszbraun theorem yields that there exist a graph homomorphism $\bar{g}: \partial^{out} \mathcal{C} \cup \mathcal{C} \to \mathcal{T} $ such that $\bar{g}=\tilde{g}$ on $\partial^{out} \mathcal{C}$ and $\bar{g}(x_0)=v$. This contradicts the assumption that $\tilde{g}$ maximizes the number of vertices in $\Lambda$ for which
	$
	d_{\mathcal{T}}(\tilde{g}(x),h(x)) \leq k
	$
	and finishes our proof.
\end{proof}

\subsection{A concentration inequality}\label{s_concentration}
The main purpose of this section is to deduce a concentration inequality stated in Theorem \ref{main_concentration_section_3} for the uniform measure on the set $\mathcal{H}^\mathbf{g}_n(s)$, which is defined in Definition~\ref{e_def_micro_surface_tension}. We recall the statement of the concentration inequality:
\begin{theorem*}[Concentration inequality]
	There exists universal constants $C$ and $c$ such that, under the uniform measure~$\mathbb{P}_n^{s}$ on $\mathcal{H}^\mathbf{g}_n(s)$, for all $x=(x_1,..,x_m)$ in $\Z^m$, and for all~$\varepsilon \geq n^{-0.45}$ we have
	\begin{align}
	\mathbb{P}_n^{s} \left( \max_{x \in \mathbb{Z}^m} d_{\mathcal{T}}(h(x),\mathbf{g}(\lfloor s \cdot x \rfloor)) \geq \varepsilon n \right) \leq Ce^{-c\varepsilon^2 n},
	\end{align}
	where~$\mathbf{g}$ is a two-sided geodesic and the function~$\mathbf{g}(n)$ is given by Convention~\ref{e_d_point_on_geodesic}.
\end{theorem*}

We will deduce Theorem~\ref{main_concentration_section_3} from an auxiliary concentration inequality.


\begin{definition}
To each $x=(x_1,..,x_m) \in \Z^m$ we associate a point $0_x$ defined by
 \begin{align}
 \label{e_0x}
 0_x := ( \lfloor \frac{x_1}{n} \rfloor n,.. , \lfloor \frac{x_m}{n} \rfloor n).
 \end{align}
 The point $0_x$ is the lower left corner of the box containing $x$ when partitioning $\Z^m$ into boxes of size $n$.
	
	For any~$h \in \mathcal{H}^\mathbf{g}_n(s)$ and~$x \in \mathbb{Z}^m$ we define
	\begin{align}
	\label{e_d_norm_0}
	|h(x)|_0 := |h(x)| - |h(0_x)|,
	\end{align}
	where~$|\cdot|$ denotes the depth defined in Definition~\ref{d_depth_on_a_tree}. 
\end{definition}

\begin{remark}
	 The $n$-translation invariance of $h$ imposes that for all $k \leq m$:
	\begin{align}\label{e_ntranslation}
	|h(x+n \vec{i}_k)|_0 &= |h(x+n \vec{i}_k)|-|h(0_{x+n \vec{i}_k})| \\
	&= (|h(x)|+s_in)-(|h(0_x)|+s_in) \\
	& = |h(x)|-|h(0_x)|= |h(x)|_0 .
	\end{align}
\end{remark}

Let us now state our auxiliary concentration inequality. 

\begin{lemma}[Auxiliary concentration inequality]\label{p_azuma_distance}
	Let $S_n=\left\{ 0,1 \ldots, n-1 \right \}^m$. Then there exist a universal constants $C$ and $c$ such that for all~$\varepsilon>0$, $n \in \N$ and~$x \in S_n$ it holds
	\begin{align}
	\mathbb{P}_n^{s} \left( \left| |h(x)|_0-\mathbb{E}_n^{s}[|h(x)|_0] \right| \geq \varepsilon |x|_{\ell_1} \right) \leq C e^{-c\varepsilon^2 |x|_{\ell_1}}.
	\end{align}
	where $\mathbb{E}_n^{s}$ is the expectation with respect to $\mathbb{P}_n^{s}$.
\end{lemma}

Since the distance between $x$ and the origin is bounded from above by $mn$ in $S_n$, we deduce directly from Lemma \ref{p_azuma_distance} and \eqref{e_ntranslation} the following corollary:

\begin{corollary}\label{p_aux_concentration}
There exist a universal constants $C$ and $c$ such that for all~$\varepsilon>0$ and~$x \in \mathbb{Z}^m$ it holds
\begin{align}
\mathbb{P}_n^{s} \left( \left| |h(x)|_0-\mathbb{E}_n^{s}[|h(x)|_0] \right| \geq \varepsilon n \right) \leq C e^{-c\varepsilon^2 n}.
\end{align}
\end{corollary}

The proof of Lemma~\ref{p_azuma_distance} is in several steps. In~\cite{CEP96} a concentration inequality was deduced for domino tilings of an Aztec diamond. Using this argument as an inspiration, our argument is also based on the Azuma-Hoeffding inequality (see Lemma~\ref{p_Azuma_martingales} from below). In the setting of tree valued graph homomorphisms, verifying the assumptions of the Azuma-Hoeffding inequality becomes very challenging. For this purpose we developed a completely new argument based on coupling. We state the proof of Lemma~\ref{p_azuma_distance} with all the details in Section~\ref{s_aux_concentration_proof} and now continue to state the proof of Theorem~\ref{main_concentration_section_3}. The only additional ingredient that is needed in the proof of Theorem~\ref{main_concentration_section_3} is the fact that every element~$h \in \mathcal{H}_n^{\mathbf{g}}(s)$ has to stay close to the two-sided geodesic~$\mathbf{g}$ (see Lemma~\ref{p_closeness_translation_invariant_geodesic}).

	\begin{proof}[Proof of Theorem~\ref{main_concentration_section_3}]
		For notational simplicity, we write~$\mathbb{P}$ and~$\mathbb{E}$ instead of~$\mathbb{P}_n^{s}$ and~$\mathbb{E}_n^{s}$. We recall that by definition
		$$|h(x)|_0=|h(x)|-|h(0_x)|.$$
		
	The goal is to derive the estimate~\eqref{e_main_concentration}, which is verified in several steps.

		 Let $x=(x_1,..,x_m) \in \Z^m$. The auxiliary concentration inequality of Corollary~\ref{p_aux_concentration} states that
		\begin{align}\label{e_proof_concentration_azuma_bound}
		\mathbb{P} \left( \left| |h(x)|_0-\mathbb{E}[|h(x)|_0] \right| \geq \varepsilon n \right) \leq C e^{-c\varepsilon^2 n},
		\end{align}
		where we used the simplified notation~$\mathbb{P}=\mathbb{P}_n^{s}$ and $\mathbb{E}=\mathbb{E}_n^{s}$. 
		In order to extend this estimate to the whole space $\Z^m$ we observe that \eqref{e_ntranslation} implies:
%
		\begin{align}\label{e_estimate_max2}
		\mathbb{P}& (\max_{x\in \Z^m}||h(x)|_0-\mathbb{E}[|h(x)|_0]| \geq \varepsilon n ) \\
		& = \mathbb{P} (\max_{x\in S_n}||h(x)|_0-\mathbb{E}[|h(x)|_0]|) \geq \varepsilon n) \\
		& \leq n^m \max_{x \in S_n} \mathbb{P} (||h(x)|_0-\mathbb{E}[|h(x)|_0]|) \geq \varepsilon n) \\
		& \leq C \ n^m e^{-c\varepsilon^2 n} \\
		& \leq C \ e^{-c\varepsilon^2 n}.
		\end{align}
		In the next step, we show that the estimate~\eqref{e_estimate_max2} yields 
		\begin{align}
		\mathbb{P}(d_\mathcal{T}(h(x), \Pi_{\mathbf{g}} (h(x) )) \geq \varepsilon n) \leq Ce^{-c\varepsilon^2 n}. \label{e_concentration_d_h_g2}
		\end{align}
		Notice that $|h(0+n \vec{i}_l)|-|h(0)|=s_l \cdot n$ almost surely. Since the uniform measure on $\mathcal{H}^\mathbf{g}_n(s)$ is translation invariant for the gradient, this implies that for all $l\leq m$:
		\begin{eqnarray}\label{e_expectation}
		\mathbb{E}[|h( 0+\vec{i}_l)|_0] &= & \mathbb{E}[|h(0+ \vec{i}_l)|-|h(0)|] \\
		& = & 	\frac{1}{n} \sum^{n-1}_{l= 0} \mathbb{E}[|h(0+(l+1) \vec{i}_l)|-|h(0+l \vec{i}_l)|] \\
		& = & \frac{1}{n} \mathbb{E}[|h(0+n \vec{i}_l)|-|h(0)|]= \frac{1}{n}s_l x.\\
		\end{eqnarray}
		Using the same reasoning we also obtain that $\mathbb{E}[|h(x)|_0]=s \cdot x$.
		
		Assume wlog.~that $s_1 \geq 0$ and suppose that 	$d_{\tree}(h(x),\mathbf{g}( \lfloor s \cdot x \rfloor)) \geq d$.\newline
Since all configurations are supported on the geodesic $\mathbf{g}$, there exist $k > 0$ such that $h(x+k\vec{i}_1)$ is on $\mathbf{g}$. If $x_1+k \leq n$, this implies
		$$|h(x+k \vec{i}_1)|_0 \leq |h(x)|_0-d.$$
		Moreover, since $$ \mathbb{E}[|h(x+k \vec{i}_1)|_0]- \mathbb{E}[|h(x)|_0]=ks_1 \geq 0,$$ we obtain that
		\begin{eqnarray}
		d & \leq & |h(x)|_0 - |h(x+ \vec{i}_1)| \\
		& = & |h(x)|_0 - \mathbb{E}[|h(x)|_0] + \mathbb{E}[|h(x)|_0] - \mathbb{E}[|h(x+k \vec{i}_1)|_0] \\
		& & + \mathbb{E}[|h(x+k \vec{i}_1)|_0] - |h(x+k \vec{i}_1)|_0 \\
		& \leq & \left| |h(x)|_0 - \mathbb{E}[|h(x)|_0] \right|+ \left| |h(x+k \vec{i}_1)|_0 - \mathbb{E}[|h(x+k \vec{i}_1)|_0] \right|.\\ 
		\end{eqnarray}
		It follows that either
		$$||h(x+k \vec{i}_1)|_0-\mathbb{E}[|h(x+k \vec{i}_1)|_0]| \geq d/2$$
		or 
		$$||h(x)|_0-\mathbb{E}[|h(x)|_0]| \geq d/2.$$
		Combining this with the estimate~\eqref{e_estimate_max2} yields the desired estimate~\eqref{e_concentration_d_h_g2}.
		If $x_1+k > n$, we can simply look at the auxiliary function defined by
		$$f(x+k \vec{i}_1):=|h(x)|-|h(0)|.$$
		The following identity holds almost surely
		\begin{eqnarray}
		f(x+n \vec{i}_k)-\mathbb{E}[|f(x+n \vec{i}_k)|_0] & = & |h(x)|_0+s_in-\mathbb{E}[|h(x)|_0+s_in] \\
		& = & |h(x)|_0-\mathbb{E}[|h(x)|_0]
		\end{eqnarray}
	
		and $ \mathbb{E}[f(x+k \vec{i}_1)]- \mathbb{E}[|h(x)|_0]=ks_1 \geq 0$. Hence the previous argument still applies if we replace $|h(x+k \vec{i}_1)|_0$ by $f(x+k \vec{i}_1)$.

		We can now prove that 
		\begin{align}
		\mathbb{P} \left( d_{\tree}(h(x),\mathbf{g}( \lfloor s \cdot x \rfloor)) \geq \varepsilon n \right) \leq C e^{-c\varepsilon^2 n}. \label{e_main_concentration_preliminary2}
		\end{align}
		Notice that by definition
		\begin{eqnarray}
			|h(x)|_0-s \cdot x & = & d_{\mathcal{T}}(h(x), \Pi_{\mathbf{g}} (h(x) ))-d_{\mathcal{T}}(h(0), \Pi_{\mathbf{g}} (h(0) )) \\
			& & + \left(|\Pi_{\mathbf{g}} (h(x))|-|\mathbf{g} ( \lfloor s \cdot x \rfloor)|\right)
		\end{eqnarray}
		
		and since $||\Pi_{\mathbf{g}} (h(x))|-|\mathbf{g} (\lfloor s \cdot x \rfloor )||= d_{\mathcal{T}}(\mathbf{g}( \lfloor s \cdot x \rfloor), \Pi_{\mathbf{g}} (h(x) ))$, notice also from the triangle inequality that

		\[
		d_{\mathcal{T}}(\mathbf{g}( \lfloor s \cdot x \rfloor), \Pi_{\mathbf{g}} (h(x) ))+d_{\mathcal{T}}(h(x), \Pi_{\mathbf{g}} (h(x) )) \geq d_{\mathcal{T}}(\mathbf{g}( \lfloor s \cdot x \rfloor), h(x)).
		\]
		Combining the last two inequalities and using the triangle inequality again we obtain
			\begin{eqnarray}
			d_{\mathcal{T}}(h(x), \Pi_{\mathbf{g}} (h(x) )) & \leq & ||h(x)|_0-s \cdot x |+2d_{\mathcal{T}}(h(x), \Pi_{\mathbf{g}} (h(x) )) \\
			& & +d_{\mathcal{T}}(h(0), \Pi_{\mathbf{g}} (h(0) )).
		\end{eqnarray}
		As a consequence
		\begin{eqnarray}
		\left\{ d_{\tree}(h(x),\mathbf{g}( \lfloor s \cdot x \rfloor)) \geq \varepsilon n \right\} & \subset & \left\{d_\mathcal{T}(h(0), \Pi_{\mathbf{g}} (h(0) )) \geq \frac{\varepsilon n}{4} \right\} \\
		& & \cup \left\{||h(x)|_0-s \cdot x| \geq \frac{ \varepsilon n }{4} \right\} \\
		& & \cup \left\{d_\mathcal{T}(h(x), \Pi_{\mathbf{g}} (h(x) )) \geq \frac{\varepsilon n}{4} \right\}.\\
		\end{eqnarray}
		Hence we can apply estimates \eqref{e_concentration_d_h_g2} and \eqref{e_estimate_max2} to obtain that:
		\begin{eqnarray}
		\mathbb{P} \left( d_{\tree}(h(x),\mathbf{g}( \lfloor s \cdot x \rfloor)) \geq \varepsilon n \right) & \leq & \mathbb{P} \left( ||h(x)|_0-s \cdot x| \geq \frac{ \varepsilon n }{4} \right) \\
		& & + \mathbb{P} \left( d_\mathcal{T}(h(x), \Pi_{\mathbf{g}} (h(x) )) \geq \frac{\varepsilon n}{4} \right) \\
		& & + \mathbb{P} \left( d_\mathcal{T}(h(0), \Pi_{\mathbf{g}} (h(0) )) \geq \frac{\varepsilon n}{4} \right) \\
		& \leq & Ce^{-c\varepsilon^2 n}+Ce^{-c\varepsilon^2 n}+Ce^{-c\varepsilon^2 n} \\
		& \leq & Ce^{-c\varepsilon^2 n}. \\
		\end{eqnarray}
		Now, the estimate~\eqref{e_main_concentration} follows easily from~\eqref{e_main_concentration_preliminary2} in the following way
		\begin{align}
		\mathbb{P} \left( \max_{x \in \mathbb{Z}^m} d_{\tree}(h(x),\mathbf{g}( \lfloor s \cdot x \rfloor) \geq \varepsilon n \right) & = \mathbb{P} \left( \max_{x \in S_n} d_{\tree}(h(x),\mathbf{g}( \lfloor s \cdot x \rfloor) \geq \varepsilon n \right) \\
		& \leq n^m\max_{x \in S_n} \mathbb{P} \left( d_{\tree}(h(x),\mathbf{g}( \lfloor s \cdot x \rfloor)) \geq \varepsilon n \right) \\
		& \leq n^m 2C e^{-c\varepsilon^2 n} \\
		& \leq C e^{-c\varepsilon^2 n}.
		\end{align}
		which finishes our proof.
	\end{proof}

\subsubsection{Proof of Lemma~\ref{p_azuma_distance}}\label{s_aux_concentration_proof}

We will derive the statement of Lemma~\ref{p_azuma_distance} from the well-known Azuma-Hoeffding inequality.

\begin{lemma}[Azuma-Hoeffding~\cite{Azu67,Hoe63}]\label{p_Azuma_martingales}
	Suppose that~$(M_k)_{\mathbb{N}}$ is a martingale and
	\begin{align}
	|M_{k} - M_{k-1}|< c_k \qquad \qquad \mbox{almost surely.}
	\end{align}
	Then for all~$N \in \mathbb{N}$ and all~$\varepsilon > 0$
	\begin{align}
	\mathbb{P} \left( |M_N - M_0| \geq t \right) \leq 2 \exp \left( - \frac{t^2}{2 \sum_{k=1}^N c_k^2} \right) .
	\end{align}
\end{lemma}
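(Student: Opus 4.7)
The plan is to apply the standard Chernoff--type argument to the martingale differences $D_k := X_k - X_{k-1}$. By hypothesis, each $D_k$ is bounded almost surely by $|D_k| \leq c_k$, and the martingale property yields $\mathbb{E}[D_k \mid \mathcal{F}_{k-1}] = 0$, where $(\mathcal{F}_k)$ denotes the natural filtration. For any $\lambda > 0$, Markov's inequality applied to the nonnegative random variable $e^{\lambda(X_N - X_0)}$ gives
\begin{align}
\mathbb{P}(X_N - X_0 \geq t) \leq e^{-\lambda t}\, \mathbb{E}\!\left[e^{\lambda(X_N - X_0)}\right],
\end{align}
so the task reduces to controlling the moment generating function of the sum of the $D_k$.

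The key auxiliary step, which I expect to be the main technical ingredient, is a conditional version of Hoeffding's lemma: any random variable $Y$ satisfying $\mathbb{E}[Y \mid \mathcal{G}] = 0$ and $|Y| \leq c$ almost surely obeys $\mathbb{E}[e^{\lambda Y} \mid \mathcal{G}] \leq e^{\lambda^2 c^2 / 2}$. I would prove this by writing $Y$ pointwise as the convex combination $Y = \tfrac{c-Y}{2c}(-c) + \tfrac{c+Y}{2c}(c)$, using convexity of $\xi \mapsto e^{\lambda \xi}$ to obtain $e^{\lambda Y} \leq \tfrac{c-Y}{2c}\, e^{-\lambda c} + \tfrac{c+Y}{2c}\, e^{\lambda c}$, and then taking conditional expectation. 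The $\mathbb{E}[Y \mid \mathcal{G}] = 0$ assumption makes the linear term vanish and the bound collapses to $\cosh(\lambda c)$, which is dominated by $e^{\lambda^2 c^2 / 2}$ by a termwise comparison of their Taylor expansions.

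With this in hand, I integrate the increments out one at a time using the tower property. Conditioning on $\mathcal{F}_{N-1}$ and applying the lemma to $D_N$ yields
\begin{align}
\mathbb{E}\!\left[e^{\lambda(X_N - X_0)} \,\big|\, \mathcal{F}_{N-1}\right] = e^{\lambda(X_{N-1}-X_0)}\, \mathbb{E}\!\left[e^{\lambda D_N} \,\big|\, \mathcal{F}_{N-1}\right] \leq e^{\lambda(X_{N-1}-X_0)} \, e^{\lambda^2 c_N^2 / 2}.
\end{align}
Iterating $N$ times produces $\mathbb{E}[e^{\lambda(X_N - X_0)}] \leq \exp\!\bigl(\tfrac{\lambda^2}{2} \sum_{k=1}^N c_k^2\bigr)$. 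Substituting back into the Chernoff bound and optimizing over $\lambda > 0$, the choice $\lambda = t / \sum_{k=1}^N c_k^2$ gives the one-sided inequality $\mathbb{P}(X_N - X_0 \geq t) \leq \exp\!\bigl(-t^2 / (2 \sum_{k=1}^N c_k^2)\bigr)$. Applying the same reasoning to the martingale $(-X_k)_{k \in \mathbb{N}}$ (which satisfies the identical increment bound) controls the lower tail, and a union bound over the two tails yields the stated factor of $2$ in the final inequality.
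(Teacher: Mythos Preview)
Your proof is the standard Chernoff-method derivation of the Azuma--Hoeffding inequality and is correct. The paper itself does not prove this lemma: it is stated with citations to~\cite{Azu67,Hoe63} and used as a black box, so there is no paper proof to compare against. Your argument is precisely the classical one those references contain.
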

In order to apply Azuma-Hoeffding we have to specify which martingale~$M_k$ we are considering. For this let us first introduce the filtration of sigma algebras~$\mathcal{F}_k$ we are using. For a given $x=(x_1,..x_m) \in S_n$, we define a canonical geodesic path $\mathbf{p}_x$ between $0$ and $x$ by:
	\begin{align}\label{e_canonical_path}
	\mathbf{p}_x=\{p_0=0,..,p_{x_1}=(x_1,..,0),p_{x_1+1}=(x_1,1,..,0) ,..,p_{|x|_{\ell_1}}=x\}.
	\end{align}
Informally $\mathbf{p}_x$ is the unique geodesic path from $0$ to $x$ which is increasing in the total order of $\Z^m$ given by the coordinates.
We define the functions~$g_k : \mathcal{H}_n^{\mathbf{g}} \to \mathbb{R}$ for~$k=0$ via
\begin{align}
g_0 (h) = |h(0)|
\end{align}
and for~$k \geq 1$ via 
\begin{align}
g_k (h) = |h(p_k)| - |h(p_{k-1})|.
\end{align}
Then, the sigma algebras~$\mathcal{F}_k$ are defined via
\begin{align}
\mathcal{F}_k = \sigma (g_l , \ 0 \leq l \leq k).
\end{align} 
Now, we define the martingale~$M_k$,~$0 \leq k \leq |x|_{\ell_1}$ in the usual way using conditional expectations i.e.
\begin{align}\label{e_azuma_def_X_k}
M_k = \mathbb{E}_n^{s} \left[ |h(x)|_0| \mathcal{F}_k\right],
\end{align}
where~$\mathbb{E}_n^{s}$ denotes the expectation under the uniform probability measure on~$\mathcal{H}_{n}^{\mathbf{g}}(s)$. We note that for~$k=|x|_{\ell_1}$ it holds
\begin{align}
M_{|x|_{\ell_1}} = |h(x)|_0.
\end{align}
As a consequence, the statement of Lemma~\ref{p_azuma_distance} follows directly from Azuma-Hoeffding by choosing~$N= |x|_{\ell_1}$ (see Lemma~\ref{p_Azuma_martingales}), if we can show that almost surely
\begin{align}
|M_k - M_{k-1}|\leq 4. 
\end{align}
This is exactly the statement of the following lemma.

\begin{lemma}\label{p_azuma_hypo_verification}
	Using the definitions from above, it holds for~$k=1 \ldots, |x|_{\ell_1}$ that almost surely
	\begin{align}\label{e_azuma_hoeffding_crucial_ingredient}
	|M_k - M_{k-1}| & =
	\left| \mathbb{E}_n^{s} \left[|h(x)|_0 | \mathcal{F}_{k} \right] - \mathbb{E}_n^{s} \left[|h(x)|_0 | \mathcal{F}_{k-1} \right]\right| \leq 4.
	\end{align}
\end{lemma}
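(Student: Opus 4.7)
The strategy is to reduce the martingale increment bound to the existence of a depth-controlled coupling between two conditional laws. Because $h$ is a graph homomorphism, the random variable $g_k = |h(p(k))| - |h(p(k-1))|$ takes values in $\{-1,+1\}$, and the filtrations $\mathcal{F}_{k-1} \subset \mathcal{F}_k$ differ exactly by the revelation of $g_k$. By the tower property, for either realization of $g_k$ one has
\begin{equation}
|X_k - X_{k-1}| \leq \left| \mathbb{E}\left[|h(x)| \mid g_k = +1, \mathcal{F}_{k-1}\right] - \mathbb{E}\left[|h(x)| \mid g_k = -1, \mathcal{F}_{k-1}\right] \right|.
\end{equation}
Hence it suffices to construct, for each realization of $\mathcal{F}_{k-1}$, a measure-preserving bijection $\Phi$ between the atoms $A_+$ and $A_-$ corresponding to $g_k = +1$ and $g_k = -1$ such that $\bigl||h(x)| - |\Phi(h)(x)|\bigr| \leq 4$ for every $h \in A_+$. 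Given such a $\Phi$, averaging the pointwise inequality over $A_+$ yields the claim.

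The bijection $\Phi$ is obtained by applying the one-step map of the modified Glauber dynamic introduced earlier in the paper at the vertex $p(k)$. For $h \in A_+$, the vertex $h(p(k))$ is the unique neighbor of $h(p(k-1))$ whose depth equals $|h(p(k-1))|+1$; the dynamic re-routes it to the neighbor of depth $|h(p(k-1))| - 1$ and then performs the non-local compensation step which propagates the local modification across the $n$-translates of $p(k)$. The non-local step is precisely the feature that distinguishes the modified dynamic from standard Glauber: it restores $n$-translational invariance, preserves the slope $s$ and the support on the geodesic $\mathbf{g}$, so that the output is again an element of $\mathcal{H}_n^{\mathbf{g}}(s)$, and by symmetry of the construction it realizes a genuine bijection from $A_+$ onto $A_-$. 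Crucially, this dynamic was designed so that the perturbation of the depth function $|h(\cdot)|$ at any fixed vertex is controlled by a universal constant; tracking the re-routings shows this constant is at most $4$.

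The main obstacle is exactly this last quantitative statement: namely that the modified flip, which may affect infinitely many vertices in order to maintain $n$-invariance, alters the depth at any single vertex by at most $4$. This rests on a geometric fact about the $d$-regular tree: the local flip at any translate of $p(k)$ is a length-$2$ detour in $\mathcal{T}$, and on a tree two such detours emanating from different translates either cancel one another in depth or do not reach $x$ at all, so they cannot accumulate coherently. The gluing of the perturbed configuration along boundaries of translates is handled by the discrete Kirszbraun theorem (Theorem~\ref{Kirszbraun}), which both certifies that the global output is a valid graph homomorphism and provides an explicit prescription for the re-attachment. Once the depth bound of $4$ per flip is established and the bijectivity and measure-preservation of $\Phi$ are checked, Lemma~\ref{p_azuma_hypo_verification}, and hence Lemma~\ref{p_aux_concentration} via Azuma--Hoeffding, follow immediately.
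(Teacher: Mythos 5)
Your reduction to a pointwise bound is fine as far as it goes: writing $X_{k-1}$ as a convex combination of the two conditional expectations given $g_k=\pm 1$ does show $|X_k-X_{k-1}|$ is bounded by the gap between them. The gap appears immediately afterward, in the claimed existence of a measure-preserving bijection $\Phi: A_+ \to A_-$. The conditional law given $\mathcal{F}_{k-1}$ is uniform on an atom $A$ that splits into $A_+$ and $A_-$, and there is no reason for $|A_+|=|A_-|$ in a tree-valued model: at any vertex of $\mathcal{T}$ one direction decreases depth while $d-1$ directions increase it, so the combinatorics of the two events $g_k=\pm1$ are genuinely asymmetric. A bijection cannot exist in general, and a fortiori the ``one-step map of the modified Glauber dynamic at $p(k)$'' cannot realize it. That dynamic is a Markov kernel on a \emph{single} state space $\mathcal{H}_n^{\mathbf{g}}(s)[c_1,\ldots,c_k]$ (it is explicitly forbidden from changing fixed depths), not a map carrying one conditioning to the other.

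Your description of the modification is also off target. The extra step in the adapted dynamic is not a ``non-local compensation across $n$-translates''; $n$-invariance is already built in. The modification is the \emph{resampling of excursions} before and after a pivot, designed to control the single dangerous event where a site is a true minimum for one configuration and a fake minimum for a neighboring configuration. The paper's proof instead runs a coupling of \emph{two} adapted-Glauber chains $(X_l,Y_l)$ on $\mathcal{H}_n^{\mathbf{g}}(s)[c_1,\ldots,c_{k+1}]$ and $\mathcal{H}_n^{\mathbf{g}}(s)[c_1,\ldots,c_k]$, initialized at depth-deviation $\leq 2$ via the Kirszbraun theorem, and shows via the excursion monotonicity $P(X_l,y)\le P(Y_l,y)$ (Lemma~\ref{p_excursion_for_minima}) that the coupling preserves the deviation bound at every step; letting $l\to\infty$ gives the martingale-increment bound through Lemma~\ref{Markov_irreducibility}. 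To repair your argument you would need to replace the bijection with a coupling of the two uniform conditional measures, and replace the hand-waved ``detour cancellation on a tree'' by a precise mechanism for preserving the depth deviation — which is exactly the content the excursion resampling and Lemma~\ref{p_excursion_for_minima} supply.
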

Hence, we see that in order to deduce Lemma~\ref{p_aux_concentration} it is only left to verify Lemma~\ref{p_azuma_hypo_verification}. For deducing the estimate~\eqref{e_azuma_hoeffding_crucial_ingredient}, we need to show that changing the depth of a single point $y$ does not influence too much the expected depth of the point $x$. The proof of Lemma~\ref{p_azuma_hypo_verification} is quite subtle. The reason is that the structure of the uniform probability measure~$\mathbb{P}$ on the set~$\mathcal{H}_n^{\mathbf{g}}(s)$ is extremely hard to break down. Unfortunately, without classical tools derived from the model being one-dimensional, like for example stochastic monotonicity or the FKG inequality (see also~\cite{CEP96}), it seems that there is no direct way to compare $ \mathbb{E}_n^{s}[\cdot| \mathcal{F}_{k+1}]]$ and $ \mathbb{E}_n^{s}[\cdot| \mathcal{F}_{k}]]$.\\ 

Instead, we use a dynamical approach. We construct a coupled discrete time Markov chain~$(X_t, Y_t)$ on the product space~$\mathcal{H}^{\mathbf{g},free}_n(s) \times \mathcal{H}^{\mathbf{g},free}_n(s)$ such that for all $x \in \Z^m$ the law of~$|X_t(x)|_0$ converges to~$\mathbb{E}_n^{s}[|h(x)|_0| \mathcal{F}_{k+1}]]$ and the law of~$|Y_t(x)|_0$ converges to $ \mathbb{E}_n^{s}[|h(x)|_0| \mathcal{F}_{k}]]$. The crucial property will be that the Markov chain keeps the depth deviation of~$X_t$ and~$Y_t$ invariant. This means that if~
\begin{align}
\left||X_t(x)| - |Y_t(x)|\right| \leq 2
\end{align}
then also 
\begin{align}\label{e_depth_conservation}
\left| |X_{t+1}(x)| - |Y_{n+1}(x)| \right| \leq 2.
\end{align}
Because this property is verified at each step by going to the limit also holds that
\begin{align}
\left| \mathbb{E}_n^{s}[|h(x)|_0| \mathcal{F}_{k+1}]] - \mathbb{E}_n^{s}[|h(x)|_0| \mathcal{F}_{k}]] \right| \leq 4,
\end{align}
which is the desired statement of Lemma~\ref{p_azuma_hypo_verification}.\\

We will now explain how to build the Markov chain necessary to derive Lemma \ref{p_azuma_hypo_verification}. We decide to build the Markov chain on the set $\mathcal{H}^{\mathbf{g},free}_n(s)$ rather than $\mathcal{H}^\mathbf{g}_n(s)$ because it is easier to build a Markov chain which leaves the set $\mathcal{H}^{\mathbf{g},free}_n(s)$ invariant at each step. Once the Markov chain on $\mathcal{H}^{\mathbf{g},free}_n(s)$ is defined, it is easy to obtain a Markov chain on~$\mathcal{H}^\mathbf{g}_n(s)$ by shifting the homomorphisms to the right level on the geodesic. We describe precisely this operation later in this section.\\


There is a natural choice for the Markov chain on~$\mathcal{H}^{\mathbf{g},free}_n(s)$ such that the projected Markov chain on~$\mathcal{H}^\mathbf{g}_n(s)$ would have the uniform measure as its invariant measure. We call it the Glauber dynamics (see Definition~\ref{d_original_Glauber_dynamic} below). Unfortunately, even when coupled, the Glauber dynamics does not conserve the depth deviation. More precisely, the Glauber dynamics does not have the desired property~\eqref{e_depth_conservation}. This can be seen by constructing counter examples involving fake local minima (cf.~Definition~\ref{d_extrema_graph_homo} and the proof of Lemma~\ref{p_azuma_hypo_verification} from below). If one would consider graph homomorphisms to~$\mathbb{Z}$ then this technical problem would not appear and one could use the classical Glauber dynamics.\\

We overcome this technical difficulty in the following way: We carefully analyze the situations in which the depth deviation can increase under the Glauber dynamics. Then we add a resampling step before every Glauber step to prevent those situations. When adding the resampling step, one has to be very careful not to introduce a bias. We will show that this is not the case. 

We begin with introducing some necessary definitions.
\begin{definition}[Local minimum and extremum of a graph homomorphism]\label{d_extrema_graph_homo}
	Let~$h: \mathbb{Z}^m \to \mathcal{T}$ be a graph homomorphism. We say that a point $x\in \mathbb{Z}^m$ is a local minimum for $h$ if
	\begin{align*}
	|h(x)| < |h(z)| \quad \mbox{for all } z \sim x. 
	\end{align*} 
	If additionally
	\begin{align*}
	h(z) = h(\tilde z) \qquad \mbox{for all } z,\tilde z \sim x
	\end{align*}
	we say that $x$ is a true local minimum, otherwise we say that $x$ is a fake minimum for $h$.
	Finally, if 
	\begin{align*}
	h(z) = h(\tilde z) \qquad \mbox{for all } z,\tilde z \sim x
	\end{align*}
	holds without the first condition on the depth, we say that $x$ is a local extremum of $h$.
\end{definition}
\begin{figure}
	
	
	
	
	
	
	\includegraphics[width=0.8\textwidth]{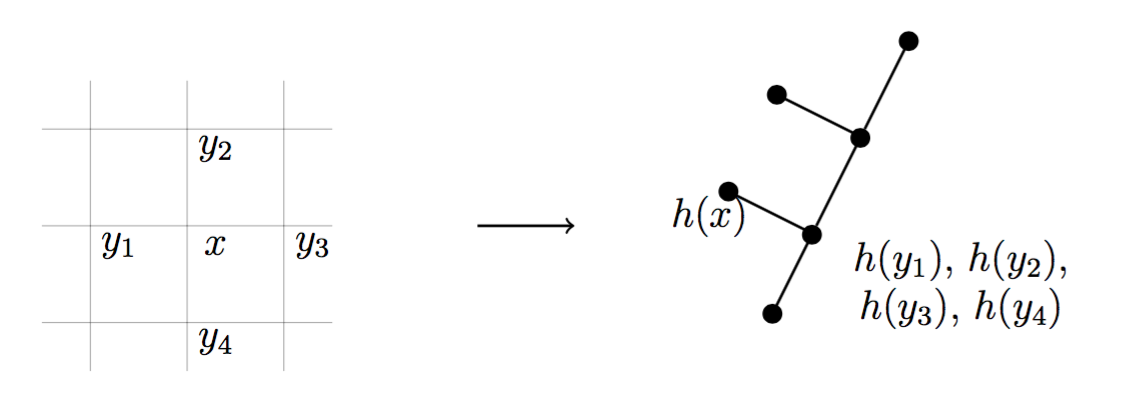}

	\caption{Extremum of a graph homomorphism. The $h(y_i)$'s are all equal.}\label{f_extremum_graph_homo}
\end{figure}

\begin{figure}
	
	
	
	
	
	
	\includegraphics[width=0.8\textwidth]{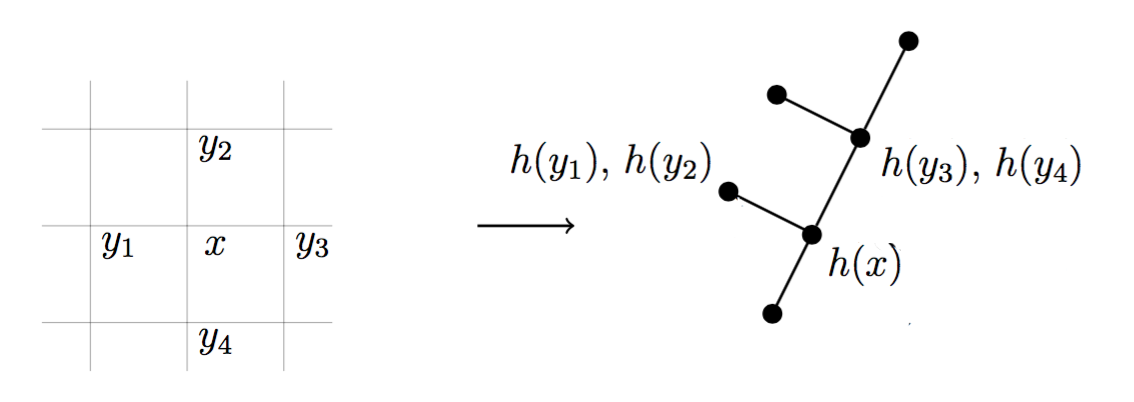}
	
	\caption{Fake local minimum a graph homomorphism}\label{f_fake_extremum_graph_homo}
\end{figure}

\begin{remark}\label{r_fake_local_maxima_do_not_exist}
	We want to observe that there is no fake local maximum for a graph homomorphism~$h$. The reason is that for a given point~$h(x)\in \tree$ there is only one neighbor~$h(x) \sim v \in \tree$ with lower depth.
\end{remark}

\begin{definition}(Pivoting)
	If $x$ is a local extremum of $h$, we call pivoting $x$ the following operation:
	\begin{itemize}
		\item For $z \sim x$ choose $w\in \tree$ randomly and with equal probability among the neighbors of $h(z)\in \tree$.
		\item Set $h(x)=w$.
	\end{itemize}
\end{definition}
In Figure~\ref{f_pivoting_graph_homo} we illustrate what pivoting means for a graph homomorphism to a tree. We will use the operation of pivoting to define a natural Glauber dynamics on~$\mathcal{H}^{\mathbf{g},free}_n(s)$. 

\begin{figure}


	
	
	
	
	
	
	
	
	\includegraphics[width=0.8\textwidth]{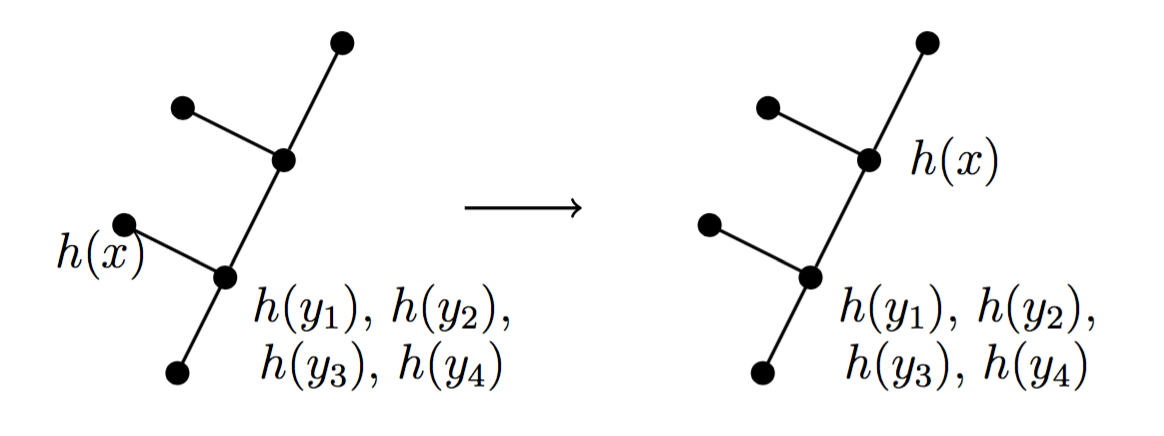}

	\caption{Example of pivoting the extremum x of a graph homomorphism}\label{f_pivoting_graph_homo}
\end{figure}

\begin{definition}(Glauber dynamic)\label{d_original_Glauber_dynamic}
	The Glauber dynamics on~$\mathcal{H}^{\mathbf{g},free}_n(s)$ is a discrete time Markov chain given by the following procedure: Let~$X_t \in \mathcal{H}^{\mathbf{g},free}_n(s)$ then~$X_{t+1}$ is attained via:
	\begin{enumerate}
		\item Choose a vertex~$x \in S_n \subset \mathbb{Z}^m$ uniformly at random.
		\item If~$X_t(x)$ is not a local extremum in the sense of Definition~\ref{d_extrema_graph_homo} do nothing and set~$X_{t+1}=X_t$.
		\item If~$X_t(x)$ is a local extremum in the sense of Definition~\ref{d_extrema_graph_homo} then pivot~$X_t$ around~$x$.
	\end{enumerate}
\end{definition}

\begin{figure}[h]
	\centering
	\includegraphics[width=0.6\textwidth]{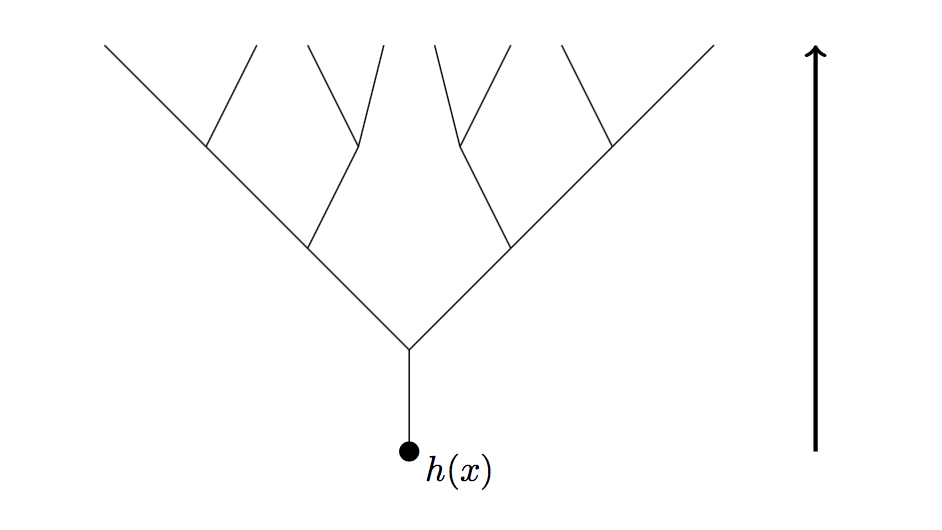}
	\caption{Complete branch of a 3-regular tree. The arrow indicates the direction in which the depth increases.}\label{f_branch}
\end{figure}

As we mentioned above, it is possible that the original Glauber dynamics increases the depth deviation. Let us explain how this is possible. Consider two graph homomorphisms~$h, g \in \mathcal{H}^{\mathbf{g},free}_n(s)$ such that:
\begin{itemize}
	\item $h$ has a true local minimum at~$x$,
	\item $g$ has a fake local minimum at~$x$ and
	\item $| h (x) | = |g (x)| + 2$.
\end{itemize}
Now, it might happen that the Glauber dynamics pivots~$h$ around~$x$ and increases~$|h(x)|$ by~$2$. Because~$g$ has a fake local minimum at~$x$, one cannot pivot the configuration~$g$ around~$x$. Therefore the depth~$|g (x)|$ stays the same. So overall, the depth difference at~$x$ increased to~$4$. From this example, one also understands that this cannot happen for graph homomorphisms to~$\mathbb{Z}$. The reason is that graph homomorphism to~$\mathbb{Z}$ cannot have fake local minima.
The purpose of the adapted Glauber dynamics is to eliminate this situation. In order to define the adapted Glauber dynamics, let us first introduce the concepts of excursion and resampling an excursion. In the reminder of this section the two-sided geodesic $\mathbf{g}$ and associated depth function are both fixed.

\begin{definition}[Complete Branch]
	Let $v_0$ be a vertex in $\tree$. We say that $\tree_0$ is a complete branch of $\tree$ with root $v_0$ (see Figure \ref{f_branch}) if and only if:
	\begin{itemize}
		\item $v_0 \in \tree_0$.
		\item There exist a unique neighbor $v$ of $v_0$ such that $|v|=|v_0|+1$ and $v \in \tree_0$.
		\item For all $v \in \tree_0 \setminus v_0: \left\{ v'\sim v \implies v' \in \tree_0 \right\}$.
	\end{itemize}
\end{definition}

\begin{definition}[Excursion]\label{d_excursion}
	Let $h: \Z^m \rightarrow \tree$ be a translation invariant homomorphism and $\tree_0$ be a complete branch of $\tree$. An excursion of $h$ is a connected component~$\mathcal{C}$ of $h^{-1}(\tree_0)$ on which the depth is bounded from above. If~$x \in \mathbb{Z}^m$ is mapped by~$h(x)$ onto the root of~$\tree_0$ then we say that the excursion starts at~$x$.
	
	We also will say that an edge $e_{xy}$ is in the excursion $\mathcal C$ if both vertices $x$ and $y$ are in $\mathcal C$. If additionally either $x$ or $y$ is mapped to the root of $\mathcal{C}$ we say that $e_{xy}$ is on the boundary of $\mathcal{C}$.
\end{definition}
We call~$\mathcal{C}$ excursion because for any element~$x \in \mathcal{C}$ and path~$\mathbf{p} \subset \mathbb{Z}^m$ from~$x$ to infinity, there must be an element~$z \in \mathbf{p}$ such that~$h(z)$ is mapped onto the root of the branch~$\tree_0$ (see Figure~\ref{f_excursion}). Later, we will need the following auxiliary statement about excursions.
\begin{lemma}\label{p_crucial_lemma_logic_statement_3}
 Assume that $x$ is a local minimum for a graph homomorphism $h \in \mathcal{H}^{\mathbf{g},free}_n(s)$ and two edges $e_{xy}$ and $e_{xy'}$ are not in an excursion of~$h$ starting at~$x$. Then 
	\begin{align} \label{e_crucial_lemma_logic_statement_3}
	\tilde h (e_{xy}) = \tilde h(e_{xy'}),
	\end{align}
	where~$\tilde h$ denotes the dual of the graph homomorphism~$h$ (cf.~ Definition~\ref{d_dual_graph_homomorphism}). 
\end{lemma}
\begin{proof}[Proof of Lemma~\ref{p_crucial_lemma_logic_statement_3}]
	We observe that if an edge is not in an excursion then it is contained in a path to a boundary point of~$\mathcal{T}$. Because~$h \in \mathcal{H}^{\mathbf{g},free}_n(s)$ is supported on one two-sided geodesic there can only be one such path. We also observe that $x$ is a local minimum for $h$. Hence, $\tilde h (e_{x,y})$ has to increase the depth of $|h(x)|$. This means that one has to move forward on the geodesic $g$ and therefore there is only one choice left for $\tilde h (e_{xy})$. This verifies~\eqref{e_crucial_lemma_logic_statement_3}.
\end{proof}

\begin{definition}[Excursion resampling]\label{d_excursion_resampling}
	Let $h:\mathbb{Z}^m \rightarrow \tree$ be a graph homomorphism and $\mathcal{C}$ be an excursion of $h$ starting at~$x$. We call the following operation \emph{resampling the excursion $\mathcal{C}$}:
	\begin{enumerate}
		\item Define $j$ to be the common index of all $h(e_{xy})$ such that $y$ is a neighbor of $x$ and $y \in \mathcal C$, and define $i_0$ to be index of the unique edge $\alpha_{i_0}$ such that $|h(x)\alpha_{i_0}| = |h(x)|-1$.
		\item Choose an index $i \in \{1,..,d\} \backslash \{i_0\}$ randomly with equal probability.
		\item We define a new dual graph homomorphism~$\tilde g$ by: 
		\begin{align}
		\tilde g(e_{xy}):= \begin{cases}
		\alpha_i & \mbox{if $e_{xy}$ is in $\mathcal C$ and $h(e_{xy})=\alpha_j$,}\\
		\alpha_j & \mbox{if $e_{xy}$ is in $\mathcal C$ and $h(e_{xy})=\alpha_i$,}\\
		\tilde h(e_{xy}) & \mbox{else}.
		\end{cases}
		\end{align}
		\item We set the new graph homomorphism~$h$ to be the graph homomorphism that is naturally associated to~$\tilde g$. 
	\end{enumerate}
\end{definition}
For an illustration of resampling an excursion we refer to Figure~\ref{f_before_resampling} and Figure~\ref{f_after_resampling}. One could ask why one does not choose the index~$i$ uniformly at random out of the set~$\{1,..,d\}$. The reason for choosing the index~$i$ out of the set~$\{1,..,d\} \backslash \{ i_0 \}$ is that by this procedure one guarantees that resampling an excursion does not change the depth profile of the configuration~$h$.\\

\begin{figure}[h]
	\caption{Illustration of an excursion (see Definition~\ref{d_excursion}). The edges in red are on the boundary of the excursion. See also Figure~\ref{f_before_resampling}.}\label{f_excursion}
	\centering
	\includegraphics[width=0.3\textwidth]{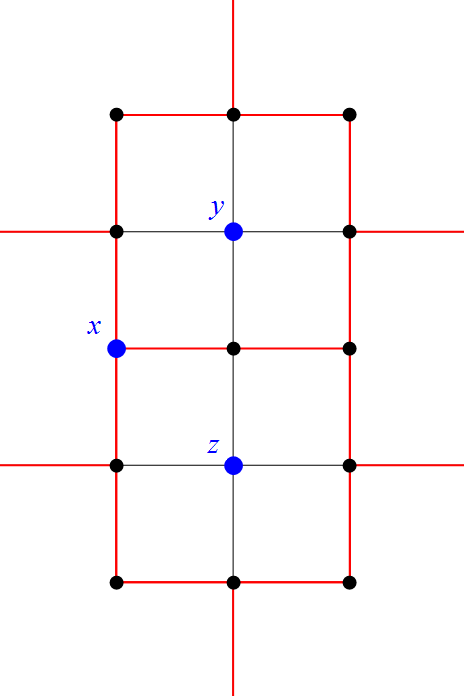}
\end{figure}

\begin{figure}[h]
	\caption{Excursion before resampling}\label{f_before_resampling}
	\centering
	\includegraphics[width=0.5\textwidth]{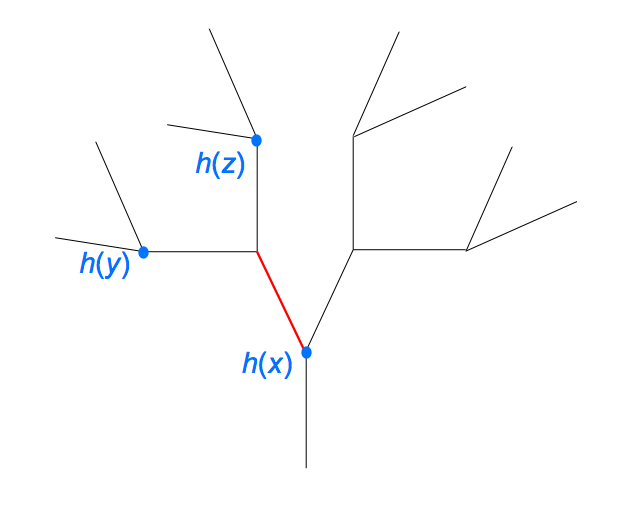}
\end{figure}

\begin{figure}[h]
	\caption{Excursion after resampling}\label{f_after_resampling}
	\centering
	\includegraphics[width=0.5\textwidth]{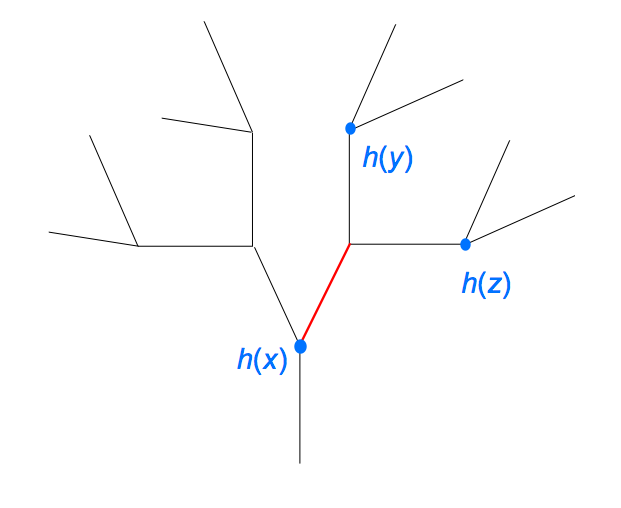}
\end{figure}

For our adapted Glauber dynamics it is important to decide if an edge~$e_{xy}$ of a fake local minimum at~$x$ is in an excursion~$\mathcal{C}$ or not. The next lemma helps a lot in that task.

\begin{lemma}\label{p_path_excursion}
	Let $x \in \Lambda$ be a fake local minimum for the homomorphism $h$ and let $e_{xy}$ be an edge starting from $x$. The edge $e_{xy}$ is not in an excursion~$\mathcal{C}$ that starts at~$x$ if and only if there exist an infinite path $\mathbf{p}=\{ x_0=x, x_1=y,...\}$ and such that:
	\begin{enumerate}
		\item For all $i \geq 1: |h(x_i)| > |h(x)|$
		\item $\sup_{i \geq 1}|h(x_i)|=\infty$
	\end{enumerate}
	Moreover, two edges $e_{xy}$ and $e_{xy'}$ are in a common excursion of~$h$ starting at~$x$ if both edges are in an excursion and there exist a path $\mathbf{p}=\{x_0=x,..,x_l=x\}$ whose first and last edges are $e_{xy}$ and $e_{y'x}$ and such that for all $1 < i < l: |h(x_i)| > |h(x)|$.
\end{lemma}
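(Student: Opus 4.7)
The plan hinges on a \textit{trap-door} property of branches in $\tree$: by definition of a complete branch, the root of $\tree_0$ has degree one inside $\tree_0$, while every non-root vertex of $\tree_0$ has all of its $d$ neighbors in $\tree$ contained in $\tree_0$ (since removing the single edge incident to the root already disconnects $\tree_0$ from the rest of $\tree$). Consequently, if a walk in $\mathbb{Z}^m$ has its $h$-image inside $\tree_0 \setminus \{\text{root}\}$, it cannot leave $\tree_0$ without first passing through the root.

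For the forward implication ($\Leftarrow$), I would suppose the infinite path $\mathbf{p}=(x,y,x_2,\ldots)$ exists and, for contradiction, that $e_{xy}$ lies in some excursion $\mathcal{C}$ starting at $x$, with associated branch $\tree_0$ rooted at $h(x)$ and containing $h(y)$. The plan is to induct on $i \geq 0$ to prove $x_i \in \mathcal{C}$: the base case $x_0=x, x_1=y \in \mathcal{C}$ is immediate; for the inductive step, the hypothesis $|h(x_i)|>|h(x)|$ ensures $h(x_i)$ is a non-root vertex of $\tree_0$, so by the trap-door property every $\tree$-neighbor of $h(x_i)$ lies in $\tree_0$, giving $h(x_{i+1}) \in \tree_0$, and since $x_{i+1}\sim x_i \in \mathcal{C}$ we obtain $x_{i+1}\in \mathcal{C}$. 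The boundedness $\sup_{\mathcal{C}}|h|<\infty$ then contradicts $\sup_i |h(x_i)|=\infty$.

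For the backward implication ($\Rightarrow$), let $\tree_0$ be the branch rooted at $h(x)$ extending toward $h(y)$, and let $\mathcal{C}$ be the connected component of $h^{-1}(\tree_0)$ containing $x$. Since $e_{xy}$ lies in no excursion, $\mathcal{C}$ is unbounded in depth; as the bottom points of $\mathcal{C}$ (those with $h$-image equal to $h(x)$) have depth exactly $|h(x)|$, the unboundedness is attained on $\mathcal{C}^+ := \{z\in\mathcal{C}:|h(z)|>|h(x)|\}$. Let $A_y$ denote the connected component of $y$ in $\mathcal{C}^+$. Provided $A_y$ is unbounded in depth, I would construct the required infinite path by iteratively concatenating finite paths in $A_y$ from the current endpoint to a vertex of depth $\geq k$, for $k=1,2,\ldots$, producing an infinite walk in $A_y$ with $\sup_i|h(x_i)|=\infty$; prepending $x_0=x$ then completes the construction. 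The second assertion of the lemma follows by applying the same trap-door induction to the given finite loop $(x,y,\ldots,y',x)$: the interior vertices must all lie in the component of $h^{-1}(\tree_0)$ containing $y$, so $y'$ lies in that component too, and $e_{xy}$ and $e_{xy'}$ share a component (and hence an excursion, whenever the component is bounded).

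The main obstacle is showing that $A_y$ is unbounded in depth. A priori, the unbounded depth of $\mathcal{C}$ could be confined to components of $\mathcal{C}^+$ separated from $A_y$ by bottom points. To close this gap I would use that $h$ is translation invariant (as the excursion framework requires): bottom points form a periodic pattern in $\mathbb{Z}^m$, so a bounded $A_y$ would force $\mathcal{C}$ to consist of infinitely many disjoint translates of bounded islands connected only through bottom points, which cannot assemble into a single connected component of unbounded depth. Making this periodicity argument precise, and confirming that the resulting infinite walk indeed satisfies $|h(x_i)|>|h(x)|$ at every step, is the delicate point of the proof.
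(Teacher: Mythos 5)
Your argument hinges on the same two observations the paper uses: the trap-door property for a branch $\tree_0$ (a walk whose image enters $\tree_0 \setminus \{\text{root}\}$ cannot leave $\tree_0$ without returning through the root), which drives both the ``path exists $\Rightarrow$ no excursion'' direction and the ``moreover'' clause, and, for the other direction, the concatenation of finite paths to vertices of ever larger depth.

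What you single out as the delicate point --- that $A_y$, the component of $y$ inside $\{z : h(z)\in\tree_0,\ |h(z)|>|h(x)|\}$, be unbounded, and not merely the larger component $\mathcal{C}$ of $h^{-1}(\tree_0)$ --- is indeed where the paper's proof is terse, but the translation-invariance patch you sketch would not close it. The set $\{z : h(z)=h(x)\}$ is the $h$-preimage of a single vertex of $\tree$, and for nonzero slope it is not invariant under $z\mapsto z+n\vec{i}_k$ (the shift translates $h$ along $\mathbf{g}$); moreover the points of $\mathcal{C}$ mapped to $h(x)$ need not be local minima of the depth, so they cannot be read off from a periodic pattern of minima. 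The way out is instead to read ``the excursion associated with $e_{xy}$'' as $A_y$ itself rather than the larger $\mathcal{C}$. The paper's own proof does this by taking $\tree_0$ to be the branch rooted at $h(y)$, so that $h(x)\notin\tree_0$ and every point of the relevant component automatically has depth $>|h(x)|$. This reading is also the one forced by the ``moreover'' clause and by the count $|E(h,x)|$ in the proof of Lemma~\ref{p_excursion_for_minima}: two edges $e_{xy}, e_{xy'}$ with $h(y)=h(y')$ always share a component of $h^{-1}(\tree_0)$ via $x$ itself, so under your reading the second statement would be vacuous. Once the excursion is taken to be $A_y$, ``$e_{xy}$ not in an excursion'' just means $A_y$ is unbounded in depth, your path construction stays in $A_y$ by definition, and condition (1) is automatic --- there is nothing left to patch.
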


\begin{proof}[Proof of Lemma~\ref{p_path_excursion}]
	Suppose that $e_{xy}$ is not in an excursion, and consider the connected component $\mathcal{C}$ of $h^{-1}(\tree_0)$ which contains $y$ where $\tree_0$ is the complete tree with root $h(y)$. By definition $e_{xy}$ is not in an excursion if and only the depth is not bounded from above on $\mathcal{C}$. Consider a sequence $\{x_1,..,x_l,..\}$ in $\mathcal{C}^{\mathbb{N}}$ such that for all $i \geq \mathbb{N}: |h(x_i)|=|h(x)|+1$, and build a path $\mathbf{p}$ that goes through all the $x_i$'s while staying in $\mathcal{C}$ (this is always possible since $\mathcal{C}$ is connected), then $\mathbf{p}$ verifies the conditions of the lemma.
	Reciprocally, if $e_{xy}$ is in an excursion for $h$ then the depth is bounded from above on $\mathcal{C}$ and any path $\mathbf{p}$ on which the depth is not bounded from above must leave $\mathcal{C}$ at some point.
\end{proof}

The resampling-step will be used in our construction of the adapted Glauber dynamics. It is important to show that resampling maps an element from~$\mathcal{H}^{\mathbf{g},free}_n(s)$ onto an element in~$\mathcal{H}^{\mathbf{g},free}_n(s)$. This is a direct consequence of the following statement. 

\begin{lemma} \label{invariant_excursion}
	Assume that~$h \in \mathcal{H}^{\mathbf{g},free}_n(s)$. Then it holds that an edge $e_{xy}$ is in an excursion if an only if for all $1 \leq k \leq m$ the edge $e_{(x+n\vec{i}_k)(y+n\vec{i}_k})$ is also in an excursion. Additionally, it holds that $e_{xy}$ and $e_{x'y'}$ are on the boundary of a common excursion if and only if for all $1 \leq k \leq m$ the edges $e_{(x+n\vec{i}_k)(y+n\vec{i}_k)}$ and $e_{(x'+n\vec{i}_k)(y'+n\vec{i}_k)}$ are also one the boundary of a common excursion.
\end{lemma}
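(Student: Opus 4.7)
The plan is to reduce both statements of the lemma to a single claim: translation by $n\vec{i}_k$ shifts the depth function by a fixed integer constant $c_k$, i.e., $|h(x + n\vec{i}_k)| = |h(x)| + c_k$ for all $x \in \mathbb{Z}^m$. With the claim in hand, part~1 follows directly from the path characterization in Lemma~\ref{p_path_excursion}, and part~2 follows by observing that the translation $T_k : x \mapsto x + n\vec{i}_k$ sends excursions to excursions.

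To establish the claim, first observe that $h$ and $h \circ T_k$ share the same dual on every edge by $n$-translation invariance. Identifying $\mathcal{T}$ with the Cayley graph of $G = \langle \alpha_1, \ldots, \alpha_d \mid \alpha_i^2 = e\rangle$ (adjacency given by left multiplication by a generator), right multiplication by any $g \in G$ is a graph automorphism that preserves dual labels; a direct path-integration argument then shows that two graph homomorphisms with the same dual differ by a global right multiplication, so $h(x + n\vec{i}_k) = h(x) g_k$ for every $x$, with $g_k := h(0)^{-1} h(n\vec{i}_k)$. In the zero-slope case this forces $g_k = e$ and $c_k = 0$. Otherwise, Lemma~\ref{p_closeness_translation_invariant_geodesic} provides a unique supporting geodesic $\mathbf{g}$ with $h(\mathbb{Z}^m)$ in its $n/2$-neighborhood; since $h(\mathbb{Z}^m) g_k = h(T_k(\mathbb{Z}^m)) = h(\mathbb{Z}^m)$, the same set lies within $n/2$ of $\mathbf{g} g_k^{-1}$ as well, and the uniqueness clause of Lemma~\ref{p_closeness_translation_invariant_geodesic} forces $\mathbf{g} g_k^{-1} = \mathbf{g}$ setwise. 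Hence right multiplication by $g_k$ preserves $\mathbf{g}$, acts on it as a translation by $c_k := |g_k|$, and commutes with the projection $\Pi_{\mathbf{g}}$; since right multiplication is a tree isometry and $|v| = |\Pi_{\mathbf{g}}(v)| + d_{\mathcal{T}}(v, \Pi_{\mathbf{g}}(v))$ for $v$ within $n/2$ of $\mathbf{g}$, one concludes $|v g_k| = |v| + c_k$ and the claim.

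Given the claim, part~1 is immediate: an infinite path $\mathbf{p} = \{x_0 = x, x_1 = y, x_2, \ldots\}$ witnessing (via Lemma~\ref{p_path_excursion}) that $e_{xy}$ is not in an excursion translates under $T_k$ to an analogous witness for $e_{(x + n\vec{i}_k)(y + n\vec{i}_k)}$, because the inequalities $|h(x_i)| > |h(x)|$ and the unboundedness $\sup_i |h(x_i)| = \infty$ all shift by the same constant $c_k$; the converse uses $T_k^{-1}$. For part~2, if $\mathcal{C}$ is a connected component of $h^{-1}(\mathcal{T}_0)$ with depth bounded above (an excursion), then $T_k(\mathcal{C})$ is a connected component of $h^{-1}(\mathcal{T}_0 g_k)$ --- where $\mathcal{T}_0 g_k$ is again a complete branch since right multiplication is a graph automorphism --- with depth bounded above by the claim, and so is itself an excursion; hence two edges border a common excursion $\mathcal{C}$ if and only if their translates border $T_k(\mathcal{C})$.

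The main obstacle is showing that $g_k$ preserves the supporting geodesic $\mathbf{g}$ setwise, which is where the uniqueness of the supporting geodesic in Lemma~\ref{p_closeness_translation_invariant_geodesic} plays an essential role. Once the depth-shift claim is granted, the remaining implications are straightforward translation bookkeeping.
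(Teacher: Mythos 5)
Your proof follows essentially the same route as the paper: establish that translation by $n\vec{i}_k$ shifts the depth by a fixed constant, then translate the path criteria of Lemma~\ref{p_path_excursion} for both parts. The paper's version states this depth-shift property in one sentence (``translating by $n\vec{i}_k$ leaves the depth difference invariant since the geodesic $\mathbf{g}$ which supports the configuration is left unchanged''), while you spell out the mechanism explicitly via the global right-multiplication element $g_k = h(0)^{-1}h(n\vec{i}_k)$. This is a clean and correct reformulation: the fact that two homomorphisms with the same dual differ by a global right multiplication is exactly the right observation, and the argument via uniqueness of the supporting geodesic (Lemma~\ref{p_closeness_translation_invariant_geodesic}) that right multiplication by $g_k$ preserves $\mathbf{g}$ setwise is the same fact the paper invokes implicitly. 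Your handling of part~2 via $T_k(\mathcal{C})$ being a connected component of $h^{-1}(\mathcal{T}_0 g_k)$ is also correct and slightly more structural than the paper's path-based phrasing.

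One step is asserted rather than argued: that the isometry $v \mapsto v g_k$ ``acts on $\mathbf{g}$ as a translation''. An isometry of $\mathcal{T}$ preserving a two-sided geodesic setwise could a priori restrict to a \emph{reflection} of $\mathbf{g}$, in which case the depth would not shift by a constant. To exclude this, note that a reflection of $\mathbf{g}$ fixing $\mathbf{g}$ setwise with $\mathbf{r}\in\mathbf{g}$ has $\mathbf{r} g_k^2 = \mathbf{r}$, hence $g_k^2 = e$; but then $d_\mathcal{T}(h(0), h(n\vec{i}_k)) = |g_k|$ is odd, which has the wrong parity if $n$ is even, and in the odd case forces $h$ to be $2n$-periodic and hence of bounded range, where the lemma holds trivially because no path has unbounded depth and Lemma~\ref{p_path_excursion} never fires. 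Moreover all the $g_j$ commute (from $h(x+n\vec{i}_j + n\vec{i}_k)$), and in the infinite dihedral group a reflection cannot commute with a nontrivial translation, so a single nontrivial translation direction already forces every $g_k$ to be a translation. The paper skips this entirely, so your proposal matches the paper's level of rigor here, but flagging the alternative (reflection) case would close the argument cleanly.
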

The proof of Lemma~\ref{invariant_excursion} is based on Lemma~\ref{p_path_excursion}.
\begin{proof}[Proof of Lemma~\ref{invariant_excursion}]
	Both claims follow from the fact that translating by $n\vec{i}_k$ leaves the depth difference invariant since the two-sided geodesic $\mathbf{g}$ which support the configuration is left unchanged by translation.
	
	Hence, if there exist an infinite path $\mathbf{p}=\{ x_0=x, x_1=y,...\}$ such that for all $i \geq 1: |h(x_i)| > |h(x)|$ and $\sup_{i \geq 1}|h(x_i)|=\infty$. Then the path $\mathbf{p}+n\vec{i}_k= \{ x_0=x+n\vec{i}_k, x_1=y+n\vec{i}_k,...\}$ also verifies for all $i \geq 1: |h(x_i)| > |h(x)|$ and $\sup_{i \geq 1}|h(x_i)|=\infty$.

	Similarly two edges $e_{xy}$ and $e_{x'y'}$ are on the boundary of a common excursion if there exist a path $\mathbf{p}=\{x_0=x,x_1=y,..,x_{l-1}=y',x_l=x \}$ such that for all $1 < i < l: |h(x_i)| > |h(x)|$ which is true if and only if the path $\mathbf{p}+n\vec{i}_k= \{ x_0 =x+n\vec{i}_k ,x_1=y+n\vec{i}_k ,..,x_{l-1}=y'+n\vec{i}_k,x_l=x+n\vec{i}_k\}$ also verifies for all $1<i <l : |h(x_i+n\vec{i}_k)| > |h(x+n\vec{i}_k)|$.
\end{proof}

We are now almost ready to define our adapted Glauber dynamics. Before we proceed we need to describe how to go from a Markov chain on $\mathcal{H}^{\mathbf{g},free}_n(s)$ to a Markov chain on $\mathcal{H}^\mathbf{g}_n(s)$: A Markov chain on the space~$\mathcal{H}^{\mathbf{g},free}_n(s)$ induces a Markov chain on $\mathcal{H}^\mathbf{g}_n(s)$ using the natural operation of geodesic shift which simply pin $\Pi_{\mathbf{g}}(h(0))$ at the right place on the geodesic. In the rest of Section~4 we assume wlog.~that the geodesic $\mathbf{g}$ is the only two-sided geodesic going through all the vertices $(\alpha_1\alpha_2)^k$ for $k \in \Z$. Since all geodesics are isomorphic, it is sufficient to prove the concentration inequality \eqref{e_main_concentration} for this specific choice of $\mathbf{g}$.

\begin{definition}[geodesic shifts]\label{d_gedosesic_shifts}
	Let $h \in \mathcal{H}^{\mathbf{g},free}_n(s)$ we define the shift $\theta_{\mathbf{g}}:\mathcal{H}^{\mathbf{g},free}_n(s) \to \mathcal{H}^\mathbf{g}_n(s)$ by
	\begin{align}
	\theta_{\mathbf{g}}(h)=(\alpha_1\alpha_2)^{-\lfloor \frac{|\Pi_{\mathbf{g}}(h(0))|}{2} \rfloor}h.
	\end{align}
\end{definition}

\begin{remark}
	An easy way to describe the shift $\theta_{\mathbf{g}}(h)$ of $h \in \mathcal{H}^{\mathbf{g},free}_n(s)$ is to say that it is the only homomorphism in $\mathcal{H}^\mathbf{g}_n(s)$ such that $\widetilde{\theta_{\mathbf{g}}(h)}=\tilde{h}$.
\end{remark}
The geodesic shift $\theta_{\mathbf{g}}$ leaves the gradient homomorphism $\tilde{h}$ associated to an homomorphism $h$ invariant. Hence for all $x \in \Z^m: |\theta_{\mathbf{g}}(h)(x)|_0=|h(x)|_0$ since the depth difference between two points is a gradient measurable quantity. 

Now, let us describe the adapted Glauber dynamics on $\mathcal{H}^{\mathbf{g},free}_n(s)$ that is used in the proof of Lemma~\ref{p_azuma_hypo_verification}.

\begin{definition}[Adapted Glauber dynamics]\label{d_adapted_glauber}
	Let $x \in S_n$ and let $\mathbf{p}_x=\{x_0=0,...,x_{|x|_{\ell_1}}=x\}$ be the canonical geodesic path between $0$ and $x$ defined in \eqref{e_canonical_path}. Let $(c_1,..,c_k) \in \Z^k$ and denote by $\mathcal{H}^{\mathbf{g},free}_n(s)[c_1,..,c_k]$ the set of homomorphisms $h \in \mathcal{H}^{\mathbf{g},free}_n(s)$ such that $|h(x_i)|-|h(0)|=c_i$ for $1 \leq i \leq k$. We consider the following dynamics on $\mathcal{H}^{\mathbf{g},free}_n(s)[c_1,..,c_k]$. Let~$X_t \in \mathcal{H}^{\mathbf{g},free}_n(s)[c_1,..,c_k]$. Then the new configuration~$X_{t+1} \in \mathcal{H}^{\mathbf{g},free}_n(s)[c_1,..,c_k]$ is obtained in the following way:
	\begin{enumerate}
		\item Choose a vertex~$y \in S_n$.
		\item If~$|X_t(y)|_0$ is not fixed then resample all the excursions~$\mathcal{C}$ of~$X_t$ that start in $y$. If~$y$ becomes an extremum, pivot $y$ after that and resample again all the excursions~$\mathcal{C}$ of~$X_t$ that start in $x$. 
		\item If $|X_t(y)|_0$ is fixed (that is $y=x_i$ for $i\leq k$ and $|X_t(y)|_0=c_i$) then only resample all the excursions~$\mathcal{C}$ of~$X_t$ that start in $y$ but do not pivot $y$.
	\end{enumerate}
\end{definition}

\begin{remark}
	Since the adapted Glauber dynamics only depends on gradient measurable events and the geodesic shift $\theta_{\mathbf{g}}$ leaves the gradient invariant, the Markov chain $\{ X_t \}_{t \in \mathbb{N}}$ on $\mathcal{H}^{\mathbf{g},free}_n(s)[c_1,..,c_k]$ given by Definition~\ref{d_adapted_glauber} induces a Markov chain $\{ \theta_{\mathbf{g}}(X_t) \}_{t \in \mathbb{N}}$ on the state space~$\mathcal{H}^\mathbf{g}_n(s)[c_1,..,c_k]$ of homomorphisms $h \in \mathcal{H}^\mathbf{g}_n(s)$ such that $$|h(x_i)|-|h(0)|=c_i$$ for $1 \leq i \leq k$. 
\end{remark}
In the next lemma, we show that the adapted Glauber dynamics converges to the correct law once it is projected on $\mathcal{H}^\mathbf{g}_n(s)$. 

\begin{lemma} \label{Markov_irreducibility}
	Let $|s|_{\ell_1} < 1$, let $\{ X_t \}_{t \in \mathbb{N}}$ be the Markov chain given by Definition~\ref{d_adapted_glauber} and let $\{ \theta_{\mathbf{g}}(X_t) \}_{t \in \mathbb{N}}$ be the Markov chain obtained by applying the geodesic shift described in Definition \ref{d_gedosesic_shifts} to $\{ X_t \}_{t \in \mathbb{N}}$. 
	The Markov chain $\{ \theta_{\mathbf{g}}(X_t) \}_{t \in \mathbb{N}}$ is reversible and irreducible on the state space~$\mathcal{H}^\mathbf{g}_n(s)[c_1,..,c_k]$. As a direct consequence, the law of $\{ \theta_{\mathbf{g}}(X_t) \}_{t \in \mathbb{N}}$ converges to the uniform measure on $\mathcal{H}^\mathbf{g}_n(s)[c_1,..,c_k]$.
\end{lemma}

\begin{proof}[Proof of Lemma~\ref{Markov_irreducibility}]
	We start with observing that the adapted Glauber dynamics leaves $\mathcal{H}^{\mathbf{g},free}_n(s)[c_1,..,c_k]$ invariant. Resampling excursions around a vertex~$x$ does not change the depth of~$x$ or $0$ and the dynamics only pivot vertices whose depth is not fixed.\smallskip
	
	We now show that the Markov chain $\{ \theta_{\mathbf{g}}(X_t) \}_{t \in \mathbb{N}}$ is reversible wrt.~the uniform probability measure on~$\mathcal{H}^\mathbf{g}_n(s)[c_1,..,c_k]$. Because we consider the uniform probability measure on~$\mathcal{H}^\mathbf{g}_n(s)[c_1,..,c_k]$ it suffices to show that
	\begin{align}
	\mathbb{P} \left[\theta_{\mathbf{g}}(X_t),\theta_{\mathbf{g}}(X_{t+1}) \right] =\mathbb{P} \left[\theta_{\mathbf{g}}(X_{t+1}),\theta(X_t) \right].
	\end{align}
	Let~$Z_t \in S_n$ denote the position that is chosen in the first step of the adapted Glauber dynamics. We notice that after conditioning on~$Z_t$ both the law given by resampling an excursion and pivoting are uniform over the spaces of reachable configurations in $ \mathcal{H}^{\mathbf{g},free}_n(s)[c_1,..,c_k]$. Thus we have that 
	\begin{align}
	\mathbb{P} \left[\theta_{\mathbf{g}}(X_t),\theta_{\mathbf{g}}(X_{t+1}) \right] & = \mathbb{P} \left[X_t,X_{t+1} \right] \\
	& = \sum_{x \in S_n} \mathbb{P} \left[X_t,X_{t+1} | Z_t=x \right] \frac{1}{n^2} \\
	& = \sum_{x \in S_n} \mathbb{P} \left[X_{t+1},X_t | Z_{t+1}=x \right] \frac{1}{n^2} \\
	& = \mathbb{P} \left[X_{t+1}, X_t \right] = \mathbb{P} \left[\theta_{\mathbf{g}}(X_{t+1}),\theta_{\mathbf{g}}(X_{t}) \right] .
	\end{align}
	This shows that the projected adapted Glauber dynamics is reversible on the state space~$\mathcal{H}^\mathbf{g}_n(s)[c_1,..,c_k]$.\\
	
	Now, we prove that the chain is irreducible. To show the irreducibility, one notices that in the first step there is a positive probability that the adapted Glauber dynamics transforms~$h \in \mathcal{H}^{\mathbf{g},free}_n(s)[c_1,..,c_k]$ after finitely many steps to a configuration~$g \in \mathcal{H}^{\mathbf{g},free}_n(s)[c_1,..,c_k]$ that is entirely supported on the two-sided geodesic~$\mathbf{g}$ and thus the same holds for $\theta_{\mathbf{g}}(g)$. Indeed, one can consider a sequence of moves which does not pivot any point but at each step fold one of the excursion of $h$ to the geodesic $\mathbf{g}$ using the excursion resampling. Since a given configuration only have a finite number of excursions, it takes finitely many move for this configuration to be completely folded on $\mathbf{g}$ using this process. 
	
	Now the main idea is the following. In every step, there is a positive probability that the resampling excursions does not change the configuration~$h \in \mathcal{H}^{\mathbf{g},free}_n(s)[c_1,..,c_k]$. Therefore, the Markov chain $\{\theta_{\mathbf{g}}(X_t)\}_{n \in \N}$ is irreducible if the original Glauber dynamics is irreducible on the space of configurations which are entirely supported on $\mathbf{g}$ in $\mathcal{H}^\mathbf{g}_n(s)[c_1,..,c_k]$. This means that the question of irreducibility of the adapted Glauber dynamics on~$\mathcal{H}^\mathbf{g}_n(s)[c_1,..,c_k]$ has been reduced to the question of the original Glauber dynamics being irreducible for graph homomorphisms taking value in~$\mathbb{Z}$, which we show now. \newline
	We define the distance between $h$ and $h'$ in $\mathcal{H}^\mathbf{g}_n(s)[c_1,..,c_k]$ that are entirely supported on $\mathbf{g}$ by $$d(h,h')=\Sigma_{x \in S_n}||h(x)|-|h'(x)||.$$
	It is clear that $d(h,h')=0$ implies $h$ equal $h'$ and that $d(h,h')$ must be finite since $S_n$ is finite. Hence, if we show that there exists always a pivot move which decreases the distance between two configurations, we can use inductive argument to prove that the chain is irreducible.
	Let $\mathcal{C}$ be the cluster in $S_n$ on which the function $||h(\cdot)|-|h'(\cdot)||$ is maximal and let $x_0$ be a point in $\mathcal{C}$. Suppose without restriction than $h$ is greater than $h'$ on $\mathcal{C}$ and define $\mathbf{p}= \{x_0,...,x_q \}$ to be a maximal increasing path for the depth of $h$ starting from $x_0$. That is a path of maximal length such that for all $0\leq i < q: |h(x_{i+1})|>|h(x_i)|$. This path has to be finite otherwise there exist two points $x_i$ and $x_j$ along the path $\mathbf{p}$ which are $n$-translates of each other and the slope $s$ verifies $|s|_{\ell_1}=1$. All points on the path have to be in $\mathcal{C}$ since for all $0\leq i <q$:
	$$h'(x_{i+1})-h'(x_i) \leq h(x_{i+1})-h(x_i)$$
	Moreover, the fact that $\mathbf{p}$ is of maximal length impose that $h(x_q)$ is a local maximum for the depth.
	Hence it is possible to do a pivoting move at $x_q$ which decrease the depth of $x_q$ by $2$ and decrease the distance between $h$ and $h'$.\smallskip
	
	Because the Markov chain has a finite state space and is reversible, it follows from standard theory of Markov processes that the law of the chain converges to the unique invariant probability measure (see for example~\cite{durrett}) on $\mathcal{H}^\mathbf{g}_n(s)[c_1,..,c_k]$.
\end{proof}

Before proceeding to the proof Lemma~\ref{p_azuma_hypo_verification} let us provide some auxiliary statements.

\begin{lemma}\label{p_excursion_for_minima_auxiliary}
	Let~$h_{up}, h_{dn} \in \mathcal{H}^{\mathbf{g},free}_n(s)$ such that
	\begin{align}\label{e_closeness_h_tilde_h}
	\max_{y \in \mathbb{Z}^m} \left| |h_{up}(y)| - | h_{dn}(y)| \right| \leq 2. 
	\end{align}
	Let~$x \in \mathbb{Z}^m$ be a local minimum for both $h_{up}$ and $h_{dn}$ such that
	\begin{align}
	|h_{up}(x)| = |h_{dn}(x)| + 2. 
	\end{align}
	Then it holds that: 
	\begin{align}\label{e_crucial_lemma_logic_statement_1}
	& \mbox{If an edge $e_{xy}$ is not in an excursion of $h_{up}$ starting at~$x$,} \\
	&\qquad \mbox{then $e_{xy}$ is also not in an excursion of $h_{dn}$ starting at~$x$.}
	\end{align}
	Additionally, assume that the edges~$E_x:=\left\{ e_{xy_1}, \ldots, e_{xy_k} \right\}$ are the same excursion for~$h_{up}$ around~$x$. Then it holds that: 
	\begin{align}
	& \mbox{Either all the edges in $E_x$ are in a } \label{e_crucial_lemma_logic_statement_2} \\
	& \quad \mbox{common excursion for $h_{dn}$ starting at~$x$}\\
	& \qquad \mbox{or no edge in $E_x$ is in an excursion for~$h_{dn}$ starting at~$x$.} 
	\end{align}
\end{lemma}

\begin{proof}[Proof of Lemma~\ref{p_excursion_for_minima_auxiliary}]
	
	Argument for~\eqref{e_crucial_lemma_logic_statement_1}: Let $e_{xy}$ be not in an excursion for $h_{up}$ that starts in~$x$. We know from Lemma~\ref{p_path_excursion} that there exists an infinite path $\mathbf{p}=\{x_0=x,x_1=y,...\}$ such that~$ |h_{up}(x_i)| > |h_{up}(x)|$ for all $i \in \mathbb{N}$. Since $|h_{up}(x)| = |h_{dn}(x)|+2$ and 
	$$\max_{x \in \Z^m}\left||h_{up}(x)| - |h_{dn}(x)|\right|=2,$$
	this also imposes that for all $|h_{dn}(x_i)| > |h_{dn}(x)|$ for all~$i \in \mathbb{N}$. This means that~$e_{xy}$ cannot be in an excursion for~$h_{dn}$.\\
	
	Argument for~\eqref{e_crucial_lemma_logic_statement_2}: We show the statement for only two edges~$e_{xy_1}$ and~$e_{xy_2}$. The generalization to arbitrary many edges~$\left\{ e_{xy_1}, \ldots e_{xy_k} \right\}$ is straightforward. It follows from Lemma ~\ref{p_path_excursion} that if two edges $e_{xy}$ and $e_{xy'}$ are in the same excursion for $h_{up}$ then there exists a path $\mathbf{p}=\{x_0=x,x_1=y,...,x_{l-1}=y',x_l=x\}$ such that $|h_{up}(x_i)| > |h_{up}(x)|$ for all~$1 \leq i \leq l-1$. For the same reason as in the previous paragraph, this imposes that $|h_{dn}(x_i)| > |h_{dn}(x)|$ for all~$1 \leq i \leq l-1 $. This yields that either $e_{xy}$ and $e_{xy'}$ are in the same excursion for $h_{dn}$ or both are not in an excursion for $h_{dn}$. 
\end{proof}

The following auxiliary statement is a simple consequence of Lemma~\ref{p_excursion_for_minima_auxiliary}.
\begin{lemma}\label{p_excursion_for_minima_auxiliary_2}
	Consider the same hypothesis as in Lemma~\ref{p_excursion_for_minima_auxiliary}. Let $\{\mathcal{C}_i\}_{i \in I}$ be the set of excursions of a graph homomorphism $h \in \mathcal{H}^{\mathbf{g},free}_n(s)$ starting at $x$. For each different excursion $\mathcal{C}_i$, choose an edge $e_{xz_i}$ in $\mathcal{C}_i$ starting at $x$ to be a representative edge of $\mathcal{C}_i$ and denote by $\mathcal{E}(h,x)$ the set containing those edges. It follows that $|\mathcal{E}(h,x)|$ is exactly equal to the number of different excursions around $x$ for the homomorphism $h$ (see Figure~\ref{f_excursion} for an example) and that
	\begin{align}\label{e_relation_number_of_resample_able_edges}
	&\left| \mathcal{E}(h_{dn},x)\right| \leq \left|\mathcal{E}(h_{up},x)\right|.
	\end{align}
\end{lemma}

\begin{proof}[Proof of Lemma~\ref{p_excursion_for_minima_auxiliary_2}]
	By the contraposition of the statement \eqref{e_crucial_lemma_logic_statement_1}, the edges $e_{xz} \in \mathcal{E}(h_{dn},x)$ are in an excursion of $h_{up}$ starting at $x$. By the contraposition of the statement \eqref{e_crucial_lemma_logic_statement_2}, it follows that any two edges $e_{xz}, e_{xz'} \in \mathcal{E}(h_{dn},x)$, $e_{xz} \neq e_{xz'}$, are in different excursions of $h_{up}$. This implies the desired estimate \eqref{e_relation_number_of_resample_able_edges}.
\end{proof}

\begin{lemma}\label{p_excursion_for_minima_auxiliary_3}
	Let~$h \in \mathcal{H}^{\mathbf{g},free}_n(s)$ such that~$x \in \mathbb{Z}^m$ is a local minimum of~$h$. Let~$L_x h \in \mathcal{H}^{\mathbf{g},free}_n(s) $ denote the state one obtains after the resampling the excursions of~$h_{up}$ and~$h_{dn}$ starting at~$x$ (see Definition~\ref{d_excursion_resampling}) and let $P(h,x)$ denote the probability that~$L_x h$ has a true local minimum at~$x$. Then:
	\begin{align}\label{e_formula_P_h_x}
	P(h,x) = 
	\begin{cases}
	\frac{1}{(d-1)^{|\mathcal{E}(h,x)|-1}}, & \parbox[t]{.6\textwidth}{if all edges around $x$ are in an excursion of~$h$ starting at~$x$,}\\
	\frac{1}{(d-1)^{|\mathcal{E}(h,x)|}}, & \parbox[t]{.6\textwidth}{if there is an edge around~$x$ that is not in an excursion of~$h$ starting at~$x$.}
	\end{cases}
	\end{align}
	Here~$|\mathcal{E}(h,x)|$ denotes the number of different excursions around $x$ for the homomorphism $h$.
\end{lemma}
\begin{proof}[Proof of Lemma~\ref{p_excursion_for_minima_auxiliary_3}]
	Let us first consider the case in which all edges around $x$ are in an excursion of~$h$ starting at~$x$. The resampling step means that each excursion of~$h$ starting at~$x$ will be attached to an uniformly chosen direction that increases the depth (see Definition~\ref{d_excursion_resampling} and 
	Figure \ref{f_before_resampling} and Figure \ref{f_after_resampling}). Note that in a $d$-regular tree there are $d-1$ many such directions. We recall that $P(h,x)$ denotes the probability that $h(x)$ becomes a true local minimum after resampling the excursions starting at~$x$ (see Definition \ref{d_extrema_graph_homo}). To get a local minimum all excursions must head into the same direction. Hence, for the first excursion one can choose any direction, but all other excursions must head into the same direction, which yields the desired formula
	\begin{align}
	P(h,x) = \frac{1}{(d-1)^{|\mathcal{E}(h,x)|-1}}.
	\end{align}
	Now, let us consider the second case in which there is an edge around $x$ that is not in an excursion of~$h$. It follows from the statement \eqref{e_crucial_lemma_logic_statement_3} that the graph homomorphism $h$ heads in the same direction for all the edges around $x$ that are not in an excursion. Hence, in order for $h$ to become a true local minimum after resampling the excursions around $x$, all the excursions have to head into the same direction. This yields the desired formula
	\begin{align}
	P(h,x) = \frac{1}{(d-1)^{|\mathcal{E}(h,x)|}}.
	\end{align}
\end{proof}

The following auxiliary statement is the main ingredient of the proof of Lemma~\ref{p_azuma_hypo_verification}.
\begin{lemma}\label{p_excursion_for_minima}
	Let~$h_{up}, h_{dn} \in \mathcal{H}^{\mathbf{g},free}_n(s)$ such that
	\begin{align}\label{e_closeness_h_tilde_h}
	\max_{y \in \mathbb{Z}^m} \left| |h_{up}(y)| - | h_{dn}(y)| \right| \leq 2. 
	\end{align}
	Let~$x \in \mathbb{Z}^m$ be a local minimum for both $h_{up}$ and $h_{dn}$ such that
	\begin{align}
	|h_{up}(x)| = |h_{dn}(x)| + 2. 
	\end{align}
	Let~$L_x h \in \mathcal{H}^{\mathbf{g},free}_n(s) $ denote the state one obtains after resampling the excursions of~$h_{up}$ and~$h_{dn}$ starting at~$x$ (see Definition~\ref{d_excursion_resampling}). Then it holds that
	\begin{align}\label{e_excursion_for_minima}
	P(h_{up},x) \leq P (h_{dn}, x),
	\end{align}
	where~$P(h_{up},x)$ and~$P (h_{dn}, x)$ denote the probability that~$L_x h_{up}$ and~$L_x h_{dn}$ respectively have a true local minimum at~$x$.
\end{lemma}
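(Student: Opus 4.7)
Plan. My approach is to compute $P(h,x)$ explicitly in terms of the excursion structure at $x$, and then compare $h_u$ with $h_d$ using the depth-deviation hypothesis. Throughout, note that resampling excursions does not change the depth function, so $x$ remains a local minimum in both $L_xh_u$ and $L_xh_d$; hence ``true local minimum after resampling'' is equivalent to ``all neighbors of $x$ are sent to a common child of $h(x)$ in $\mathcal{T}$.''

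Step 1 (excursion structure at a local minimum). Let $\alpha_j$ denote the unique parent direction at $h(x)$. Since $x$ is a local minimum, every neighbor $y\sim x$ satisfies $h(y)=h(x)\alpha_{i(y)}$ for some child direction $i(y)\neq j$. Partition the $2m$ neighbors according to $i(y)$, and for each used direction $i$ let $\mathcal{C}_i(h)$ be the connected component of $h^{-1}(\mathcal{T}_{0,i})$ containing $x$, where $\mathcal{T}_{0,i}$ is the branch rooted at $h(x)$ in direction $\alpha_i$. Call direction $i$ \emph{active} if $\mathcal{C}_i(h)$ has bounded depth from above (so it is a genuine excursion and will be resampled), and \emph{frozen} otherwise (so its neighbors remain in direction $i$). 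Let $\mathrm{act}(h,x)$ and $\mathrm{fr}(h,x)$ count active and frozen used directions.

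Step 2 (closed-form probability). Each active excursion is independently assigned a uniform new direction in $\{1,\dots,d\}\setminus\{j\}$, and these re-assignments rotate their neighbors accordingly, while neighbors in frozen directions stay put. For the resulting $L_xh$ to have a true local minimum at $x$, all frozen directions must coincide in a single $i^\star$, and every active excursion must be reassigned to this same $i^\star$. A direct count gives
\[
P(h,x)=\begin{cases}0&\text{if }\mathrm{fr}(h,x)\ge 2,\\ (d-1)^{-\mathrm{act}(h,x)}&\text{if }\mathrm{fr}(h,x)=1,\\ (d-1)^{-(\mathrm{act}(h,x)-1)}&\text{if }\mathrm{fr}(h,x)=0.\end{cases}
\]

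Step 3 (comparison using the depth deviation). This is where the hypotheses $|h_u(x)|=|h_d(x)|+2$ and $\max_y||h_u(y)|-|h_d(y)||\le 2$ enter. Using the path characterization from Lemma~\ref{p_path_excursion}, an edge fails to lie in an excursion iff it admits an infinite witness path whose depth exceeds $|h(x)|$ and is unbounded. Since the depth of $h_u$ lies pointwise within $2$ of that of $h_d$ but is $2$ greater at $x$, a witness path for unboundedness at $h_d$ (staying above $|h_d(x)|=c$ with unbounded supremum) transfers to a witness path at $h_u$ (staying above $|h_u(x)|=c+2$ with unbounded supremum). This yields the monotonicity
\[
\mathrm{fr}(h_u,x)\ \ge\ \mathrm{fr}(h_d,x),\qquad \mathrm{act}(h_u,x)+\mathrm{fr}(h_u,x)\ \ge\ \mathrm{act}(h_d,x)+\mathrm{fr}(h_d,x),
\]
i.e.\ $h_u$ has at least as many frozen directions and at least as many used directions as $h_d$.

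Step 4 (conclusion). Plugging Step 3 into the formula of Step 2 and doing a short case analysis on $\mathrm{fr}(h_u,x),\mathrm{fr}(h_d,x)\in\{0,1,\ge 2\}$ yields $P(h_u,x)\le P(h_d,x)$ in every case; the key observation is that the exponent of $(d-1)^{-1}$ appearing in $P(h_u,x)$ is always at least the one appearing in $P(h_d,x)$.

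The main obstacle is Step 3. The two configurations send $x$ to distinct tree vertices $h_u(x)\neq h_d(x)$, so there is no canonical identification of the child directions at $h_u(x)$ with those at $h_d(x)$, and one cannot naively pair excursions. The transfer of (un)boundedness must therefore be done purely through the depth function using Lemma~\ref{p_path_excursion}, and one needs to verify that unbounded excursion candidates at $h_d$ force unbounded candidates at $h_u$ along the \emph{same} neighbors of $x$, which is what enables the frozen-direction count to dominate. Handling the possible re-shuffling of which neighbors share a common used direction (possibly merging or splitting groups between $h_u$ and $h_d$) is the most delicate aspect.
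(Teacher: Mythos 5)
Your plan — write $P(h,x)$ explicitly in terms of the excursion structure at $x$ and then compare the two configurations via a monotonicity of the relevant counts — is the same overall strategy the paper uses, but two of your steps do not hold up as stated.

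First, the monotonicity in Step~3 is reversed. If $|h_u(x)|=|h_d(x)|+2=c+2$ and $\max_y \left||h_u(y)|-|h_d(y)|\right|\le 2$, then a path on which $|h_d(\cdot)|>c$ only guarantees $|h_u(\cdot)| > c-2$, not $|h_u(\cdot)|>c+2$, so a ``frozen'' witness for $h_d$ does \emph{not} transfer to $h_u$. The transfer goes the other way: if $|h_u(x_i)|>|h_u(x)|=c+2$, then $|h_d(x_i)|\ge |h_u(x_i)|-2 > c=|h_d(x)|$, hence not-in-excursion for $h_u$ implies not-in-excursion for $h_d$. So the correct inequality is $\mathrm{fr}(h_u,x)\le \mathrm{fr}(h_d,x)$, the opposite of what you assert. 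With your stated direction, you conclude that the case $\mathrm{fr}(h_u)=0$, $\mathrm{fr}(h_d)=1$ cannot occur, but this is precisely the one case that requires the extra work, namely the strict inequality $\mathrm{act}(h_u)\ge \mathrm{act}(h_d)+1$; it is the case the paper isolates as~\eqref{e_comparison_number_of_excursions_last_case} and proves separately.

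Second, the other half of Step~3, namely $\mathrm{act}(h_u,x)+\mathrm{fr}(h_u,x)\ge \mathrm{act}(h_d,x)+\mathrm{fr}(h_d,x)$, is the real content of the lemma and you do not prove it; your closing paragraph flags it as ``the most delicate aspect'' without resolving it. The paper handles exactly this by proving two statements about edges $e_{xy}$ at $x$: (i) not-in-excursion for $h_u$ implies not-in-excursion for $h_d$, and (ii) a common excursion of $h_u$ maps either into a single common excursion of $h_d$ or entirely out of the excursion set of $h_d$. Statement~(ii) gives the key asymmetry (the $h_u$ excursion partition refines the $h_d$ one on the common excursion edges), which together with~(i) yields $|E(h_u,x)|\ge |E(h_d,x)|$, and then a short further argument gives the $+1$ in the case $\mathrm{fr}(h_u)=0$, $\mathrm{fr}(h_d)=1$. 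Your proposal names this obstacle but does not supply the argument. Finally, a smaller point: you count ``used directions'' at $h(x)$, whereas the probability formula really depends on the \emph{number of excursions} starting at $x$, and these need not coincide (several disjoint excursions can exit $x$ along the same tree direction); the paper counts excursions directly, which sidesteps this issue.
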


The proof of Lemma~\ref{p_excursion_for_minima} consists out of a combination of Lemma~\ref{p_excursion_for_minima_auxiliary}, Lemma~\ref{p_excursion_for_minima_auxiliary_2} and Lemma~\ref{p_excursion_for_minima_auxiliary_3}.

\begin{proof}[Proof of Lemma~\ref{p_excursion_for_minima}]
	We verify the desired estimate \eqref{e_excursion_for_minima} by considering several cases. In the first case, let us assume that there is an edge around $x$ that is not in an excursion for~$h_{up}$ starting at~$x$. It follows from the statement~\eqref{e_crucial_lemma_logic_statement_1} that there is also an edge that is not in an excursion for~$h_{dn}$ starting at $x$. Hence, it follows from a combination of~\eqref{e_relation_number_of_resample_able_edges} and~\eqref{e_formula_P_h_x} that
	\begin{align}
	P(h_{up}, x) = \frac{1}{(d-1)^{|\mathcal{E}(h_{up},x)|}} \leq \frac{1}{(d-1)^{|\mathcal{E}(h_{dn},x)|}} = P(h_{dn},x).
	\end{align}
	Let us consider the second case in which all edges starting at $x$ for~$h_{up}$ are in an excursion for~$h_{up}$. We make a further distinction and additionally assume that all the edges starting at $x$ are also in an excursion for~$h_{dn}(x)$. In this case, a combination of~\eqref{e_relation_number_of_resample_able_edges} and~\eqref{e_formula_P_h_x} yields that
	\begin{align}
	P(h_{up}, x) = \frac{1}{(d-1)^{|\mathcal{E}(h_{up},x)|-1}} \leq \frac{1}{(d-1)^{|\mathcal{E}(h_{dn},x)|-1}} = P(h_{dn},x).
	\end{align}
	Let us now consider the last case in which we assume that all edges starting at $x$ are in an excursion for~$h_{up}$ but there is an edge starting at $x$ that is not in in excursion for~$h_{dn}$. In this case it we will show that \begin{align}\label{e_comparison_number_of_excursions_last_case}
	|\mathcal{E}(h_{dn},x)|+1 \leq |\mathcal{E}(h_{up},x)|.
	\end{align}
	Postponing the verification of~\eqref{e_comparison_number_of_excursions_last_case} we get by using~\eqref{e_comparison_number_of_excursions_last_case}, ~\eqref{e_relation_number_of_resample_able_edges} and~\eqref{e_formula_P_h_x} that
	\begin{align}
	P(h_{up}, x) = \frac{1}{(d-1)^{|\mathcal{E}(h_{up},x)|-1}} \leq \frac{1}{(d-1)^{|\mathcal{E}(h_{dn},x)|}} \leq P(h_{dn},x),
	\end{align}
	which closes the argument.\\
	
	The only step remaining is to prove~\eqref{e_comparison_number_of_excursions_last_case}.
Denote by $\{\mathcal{C}_i^{up}\}_{1 \leq i \leq k}$ and $\{\mathcal{C}_j^{dn}\}_{1 \leq i \leq l}$ the set of excursions starting at~$x$ of~$h_{up}$ and $h_{dn}$ respectively and for each excursion $\mathcal{C}_j^{dn}$, denote by ~$e_{xz_j}$ its representative edge in $\mathcal{E}(h_{dn},x)$. As a consequence of the statement~\eqref{e_crucial_lemma_logic_statement_2}, we know that the edges~$e_{x z_1}, \ldots,e_{x z_l}$ must be in different excursions of~$h_{up}$. Hence, re-indexing allows us to assume that~$e_{x z_j} \in \mathcal{C}_j^{up} $ for~$j \in \left\{ 1, \ldots, l\right\}$. Recall that we assumed that there exist an edge~$e_{xz_0}$ that is not in an excursion of~$h_{dn}$. Since by assumption all edges near~$x$ are in an excursion of~$h_{up}$ it follows that there is an index~$i_{0} \in \left\{1, \ldots, k \right\}$ such that $e_{x z_0} \in \mathcal{C}_{i_{0}}^{up}$. We will show in a moment that for all $j \in \left\{1, \ldots,l \right\}$
	\begin{align} \label{e_different_excursion}
	\mathcal{C}_{i_{0}}^{up} \neq \mathcal{C}_j^{up}.
	\end{align} 
	This means that the graph homomorphism~$h_{up}$ has at least~$l+1$ many different excursions that start at~$x$, which verifies~\eqref{e_comparison_number_of_excursions_last_case}. \newline
	Let us turn to the verification of~\eqref{e_different_excursion}. We use an indirect argument and assume that wlog. 
	\begin{align} 
	\mathcal{C}_{i_0}^{up} = \mathcal{C}_1^{up}.
	\end{align} 
	Hence, the edge~$e_{xz_1}$ and~$e_{xz_0}$ are in the same excursion~$\mathcal{C}_{l}^{up}$. By the statement~\eqref{e_crucial_lemma_logic_statement_2} this implies that either~$\left\{ e_{xz_1}, e_{xz_0}\right\} \subset \mathcal{C}_{1}^{dn}$ or both edges~$e_{xz_1}$ and~$ e_{xz_0}$ are not in an excursion. This is a contradiction to the fact that by construction
	\begin{align}
	e_{xz_1} \in \mathcal{C}_{1}^{dn} \qquad \mbox{and} \qquad e_{xz_0} \notin \mathcal{C}_{1}^{dn}.
	\end{align}
	
	

	
	
\end{proof}

Now, we are ready to state the proof of Lemma~\ref{p_azuma_hypo_verification}.

\begin{proof}[Proof of Lemma~\ref{p_azuma_hypo_verification}]
	
	We consider a Markov chain~$\{X_t\}_{t \in \N}$ on the state space $$\mathcal{H}^{\mathbf{g},free}_n(s)[c_1,..,c_{k+1}]$$ given by the adapted Glauber dynamics of Definition~\ref{d_adapted_glauber}. Let~$$\{Y_t\}_{t \in \N}$$ denote the Markov chain on the state space~$\mathcal{H}^{\mathbf{g},free}_n(s)[c_1,..,c_{k}]$ that is also given by the adapted Glauber dynamics of Definition~\ref{d_adapted_glauber}. As outlined before, the strategy is to define a coupling~$(X_t, Y_t)$ of those Markov chains such that 
	\begin{align}\label{e_invariance_depth_deviation_under_coupling}
	\max_{y \in \mathbb{Z}^m} \left| |X_t(y)| - |Y_t(y)| \right| \leq 2 \ \Rightarrow \max_{y \in \mathbb{Z}^m}\left| |X_{t+1}(y)| - |Y_{t+1}(y)| \right| \leq 2.
	\end{align}
	We postpone the verification of~\eqref{e_invariance_depth_deviation_under_coupling} and show how it is used to derive the statement of Lemma~\ref{p_azuma_hypo_verification}. We pick~$h \in ~\mathcal{H}^{\mathbf{g},free}_n(s)[c_1,..,c_{k+1}]$ arbitrary and set~$X_0 = h$. Then by the Kirszbraun theorem (cf.~Theorem~\ref{Kirszbraun}) there exists an element~$\bar h \in ~\mathcal{H}^{\mathbf{g},free}_n(s)[c_1,..,c_{k}]$ such that
	\begin{align}
	\max_{y\in \mathbb{Z}^m} \left| |h(y)| - |\bar{h}(y)|\right| \leq 2.
	\end{align}
	Hence, if we set~$Y_0= \bar{h}$ and we get from~\eqref{e_invariance_depth_deviation_under_coupling} that for any realization of the Markov chain and all~$t \in \mathbb{N}$
	\begin{align}
	\max_{y \in \mathbb{Z}^m} \left||X_t(y)| - |Y_t(y)| \right| \leq 2.
	\end{align} 
	The last estimate implies 
	\begin{align}
	\left| \mathbb{E} \left[ |X_t (x)|_0\right] - \mathbb{E} \left[ |Y_t (x)|_0\right] \right| & \leq \mathbb{E} \left[ \left| |X_t (x)| - |Y_t (x)| \right|+\left| |X_t (0)| - |Y_t (0)| \right| \right] \leq 4.
	\end{align}
	Now Lemma \ref{Markov_irreducibility} yields that 
	\begin{align}
	\lim_{t \to \infty} \mathbb{E} \left[ |X_t(x)|_0 \right] = \lim_{t \to \infty} \mathbb{E} \left[ |\theta_{\mathbf{g}}(X_t(x))|_0 \right] = \mathbb{E}_{n}^{s}\left[ |h(x)|_0 \ | \mathcal{F}_{k+1} \right]
	\end{align}
	and
	\begin{align}
	\lim_{t \to \infty} \mathbb{E} \left[ |Y_t(x)|_0 \right] = \lim_{t \to \infty} \mathbb{E} \left[ |\theta_{\mathbf{g}}(X_t(x))|_0 \right] = \mathbb{E}_{n}^{s}\left[ |h(x)|_0 \ | \mathcal{F}_{k} \right].
	\end{align}
	Hence, we overall get the desired estimate~\eqref{e_azuma_hoeffding_crucial_ingredient}
	\begin{align}
	\left| \mathbb{E}_{n}^{(s_1, \ldots, s_m)}\left[ |h(x)|_0 \ | \mathcal{F}_{k+1} \right] - \mathbb{E}_{n}^{(s_1, \ldots, s_m)}\left[ |h(x)|_0 \ | \mathcal{F}_{k} \right] \right| \leq 4. 
	\end{align}\mbox{}\\

	The only thing left to show is that such a coupling~$(X_t,Y_t)$ indeed exists. Let us consider an element~$y \in \mathbb{Z}^m$ such that
	\begin{align}
	|X_t(y)| = |Y_t(y)|.
	\end{align}
	Then every coupling of the chain~$X_t$ and~$Y_t$ works because the adapted Glauber dynamics can only increase the depth deviation in one time step by~2.\\ 
	
	Therefore, let us now consider an element~$y \in \mathbb{Z}^m$ such that \begin{align}
	\left| |X_t(y)| - |Y_t(y)| \right| =2.
	\end{align}
	W.l.o.g~we assume that (else we just interchange the role of~$X_t$ and~$Y_t$ in the argument)
	\begin{align}
	|X_t(y)| \geq |Y_t(y)| +2.
	\end{align}
	In this situation, if a coupling pivots the same points for the configurations $X_t$ and $Y_t$, there is only one scenario in which the original Glauber dynamics would increases the depth deviation to 4, more precisely, such that
	\begin{align}
	\left| |X_{t+1}(y)| - |Y_{t+1}(y)| \right| =4.
	\end{align}
	The scenario is when~$y$ is a true local minimum for~$X_t$ and a fake local minimum for~$Y_t$ (recall that fake local maxima cannot exist for tree-valued height functions, cf.~Remark~\ref{r_fake_local_maxima_do_not_exist}). The depth deviation could now increase if the Glauber dynamics selects the site~$y$. Then the Glauber dynamics could pivot~$X_t(y)$ but not~$Y_t(y)$, which could possibly increase the depth deviation to 4. However, using the excursion resampling we can show that there is a coupling that prevents this scenario from happening. \\
	
	Let us explain this strategy in more details. First of all, we want to mention that resampling an excursion does not change the depth of a configuration. Hence, the additional resampling steps of our adapted Glauber dynamics cannot create a violation of the desired conclusion~\eqref{e_invariance_depth_deviation_under_coupling}. 
	Down below, we construct a coupling such that
	\begin{align}\label{e_coupling_crucial_property}
	& \left\{ \mbox{$y$ is a true local minima of~$L_y X_t$} \right\} \\ 
	& \quad \Rightarrow \left\{ \mbox{$y$ is a true local minimum of~$L_y Y_t$} \right\},
	\end{align}
	where~$L_y X_t \in \mathcal{H}^{\mathbf{g},free}_n(s)$ and~$L_y Y_t \in \mathcal{H}^{\mathbf{g},free}_n(s)$ denote the states obtained after resampling the excursions starting at the vertex~$y$ for the homomorphisms~$X_t$ and~$Y_t$ respectively. The next step of our Markov chain~$(X_t, Y_t)$ is pivoting~$L_y X_t$ and~$L_y Y_t$ around~$y$. Now, this step can easily be coupled such that if~$|L_y X_t (y)|$ increases or decreases, then so does~$|L_y Y_t (y)|$. This implies the desired conclusion~\eqref{e_invariance_depth_deviation_under_coupling}.\\

	We explain now how to construct a coupling that satisfies the statement~\eqref{e_coupling_crucial_property}. The auxiliary Lemma~\ref{p_excursion_for_minima} from below states that 
	\begin{align}\label{e_crucial_property_for_existence_of_coupling}
	P(X_t,y) \leq P(Y_t,y),
	\end{align} 
	where $P(X_t,y)$ is the probability that $y$ is a true minimum of $X_t$ after resampling of the excursions starting at $x$. The quantity~$P(Y_t,y)$ is defined analogously. It is a direct consequence of~\eqref{e_crucial_property_for_existence_of_coupling} is that there is a coupling of the resampling step~$L_y$ such that~\eqref{e_coupling_crucial_property} is satisfied: One throws a random variable~$U$ that is uniformly distributed on~$[0,1]$. If~$U \leq P(X_t, y)$ one decides that both~$L_y X_t$ and~$L_y Y_t$ will have a true local minimum around~$y$, and $L_y X_t$ and~$L_y Y_t$ are chosen uniformly among those states. If~$P(X_t, y) \leq U \leq P(Y_t, y)$ one decides that only~$L_y Y_t$ will have a true local minimum around $y$, but not~$L_y X_t$. And finally if~$P(Y_t, y) \leq U \leq 1$, one decides that both $L_y X_l$ and~$L_y Y_t$ will not have a true local minimum around~$y$. This completes the argument.
\end{proof}

\section{Existence of a continuum of shift-invariant ergodic gradient Gibbs measures}\label{s_existence_gradient_gibbs}

This section is independent of the variational principle (cf.~Theorem~\ref{p_main_result_variational_principle} and Theorem~\ref{p_profile_theorem}) and of its own interest. The Kirszbraun theorem and the concentration estimate obtained in Section~\ref{s_technical_observations} carry important information about the set $ex\mathcal{G}(\Z^m,\T_d)$ of gradient Gibbs measures that are ergodic wrt.~the translations of $\Z^m$. In this section, we show the existence of a continuum of translation-invariant, ergodic, gradient Gibbs measures. In order to make our statement precise, we begin with recalling some classical results about Gibbs measures for discrete systems. They can all be found in \cite{Georgii}.


\begin{definition}[Slope]\label{d_slope}
Let $\nu$ be a gradient Gibbs measure that is ergodic wrt.~translations of $\Z^m$. Then for all $1 \leq k \leq m$ the limit
$$s_i(\nu) = \lim_{n \rightarrow \infty} \frac{1}{n} d_{\mathcal{T}}(h(0),h(n\vec{i}_k))$$
exists $\nu$-almost surely and we call $(s_1(\nu),..,s_m(\nu))$ the slope of $\nu$.
\end{definition}
It is a classical result (e.g \cite{chandgotia2015entropyminimal}) that the subadditive ergodic theorem implies the existence of this limit. Moreover, every translation invariant gradient Gibbs measure can be decomposed into a mixture of ergodic gradient Gibbs measures. The latter allows to define the slope of a translation invariant Gibbs measure in the following way:

\begin{definition}\label{d_decompostion_shift_invariant_Gibbs_measures}
Let $\mu$ be a translation invariant gradient Gibbs measure. Then the slope~$s(\mu)=(s_1(\mu),..,s_m(\mu))$ of $\mu$ is
\[
s_i(\mu)= \int_{ex\mathcal{G}(\Z^m,\T_d)}s_i(\nu)w_{\mu}(d\nu),
\]
where~$w_{\mu}$ is the ergodic decomposition of~$\mu$. This means that for any test function~$f$ it holds that
\begin{align}
 \int f (x) \mu (dx) = \int_{ex\mathcal{G}(\Z^m,\T_d)} \int f(x) \nu (dx) w_{\mu}(d \nu).
\end{align}

\end{definition}

We will now show how the concentration inequality of Theorem~\ref{main_concentration_section_3} implies the existence of an ergodic gradient Gibbs measure for each slope~$s$ whose $\ell_1$-norm is strictly less than $1$. Before we are able to prove the main theorem of this section we provide some auxiliary statements.

\begin{lemma}
	Let $s \in (\mathbb{R}^+)^m$ be such that $|s|_1 < 1$. Then there exists a sequence of $n$-translation invariant homomorphisms $\{h_n \}_{n \in \mathbb{N}}$ with slope $\left( \frac{\lfloor s_1n \rfloor}{n},..,\frac{\lfloor s_mn \rfloor}{n} \right)$ (cf.~Definition~\ref{d_translational_invariant_graph_homomorphism}).
\end{lemma}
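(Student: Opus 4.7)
The plan is to construct $h_n$ explicitly as a composition $h_n = \mathbf{g} \circ \phi_n$, where $\mathbf{g} : \mathbb{Z} \hookrightarrow \mathcal{T}$ is a fixed two-sided geodesic through the root (Lemma~\ref{e_d_point_on_geodesic}) and $\phi_n : \mathbb{Z}^m \to \mathbb{Z}$ is a $\mathbb{Z}$-valued periodic graph homomorphism built as a sum of one-dimensional sawtooth functions. For each $k \in \{1,\dots,m\}$, let $t_k$ be an integer equal to $\lfloor s_k n \rfloor$ up to an $O(1)$ parity adjustment discussed below; by hypothesis $|t_k| \leq n$. For each such $t_k$, choose a one-dimensional periodic graph homomorphism $\psi_k : \mathbb{Z} \to \mathbb{Z}$ satisfying $|\psi_k(j+1) - \psi_k(j)| = 1$, $\psi_k(0) = 0$, and $\psi_k(j + n) = \psi_k(j) + t_k$; explicitly, $\psi_k$ can take $(n+t_k)/2$ up-steps and $(n-t_k)/2$ down-steps within a single period, both of which are nonnegative integers after the parity adjustment.

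Set $\phi_n(x_1, \ldots, x_m) := \sum_{k=1}^m \psi_k(x_k)$ and $h_n(x) := \mathbf{g}(\phi_n(x))$. For any two neighbors $x \sim y$ in $\mathbb{Z}^m$ they differ in exactly one coordinate by $\pm 1$, so $|\phi_n(x) - \phi_n(y)| = 1$, and $h_n(x), h_n(y)$ are adjacent on $\mathbf{g} \subset \mathcal{T}$; hence $h_n$ is a graph homomorphism. The identity $\phi_n(x + n \vec{i}_k) = \phi_n(x) + t_k$ yields $d_{\mathcal{T}}(h_n(x), h_n(x + n\vec{i}_k)) = |t_k|$ for every $x$, so the slope is exactly $(t_1/n, \ldots, t_m/n)$. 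The support condition $\Pi_{\mathbf{g}}(h_n(0)) = \mathbf{g}(0)$ is automatic because $h_n(\mathbb{Z}^m) \subset \mathbf{g}$. Finally, $n$-translation invariance of the dual map $\tilde h_n$ follows from the $n$-periodicity of each $\psi_k$ once the geodesic's alternating edge colors are preserved under the shift by $n\vec{i}_k$, which amounts to $t_k$ being even.

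The main (and essentially only) obstacle is the parity of $t_k$: the reduced length of a word of length $n$ in the involutive generators $\alpha_1, \ldots, \alpha_d$ of $G$ has the same parity as $n$, so $t_k \equiv n \pmod 2$ is a necessary condition for any $n$-translation invariant homomorphism to have slope exactly $t_k/n$ in direction $k$; for the construction above to give full $n$-invariance of the edge coloring one needs the further condition that each $t_k$ is even. Both constraints can be met by an $O(1)$ perturbation of $\lfloor s_k n \rfloor$, producing a slope within $O(1/n)$ of $(\lfloor s_1 n \rfloor/n, \ldots, \lfloor s_m n \rfloor/n)$, which is the accuracy at which the sequence $\{h_n\}$ is invoked in later sections. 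Alternatively, one can avoid the explicit product construction by applying Theorem~\ref{Kirszbraun}: the anchor map $\bar h(n \vec{k}) := \mathbf{g}(\sum_i k_i t_i)$ on the sublattice $n \mathbb{Z}^m$ satisfies both the parity and Lipschitz hypotheses of the Kirszbraun theorem once $t_k \equiv n \pmod 2$, so it extends to a graph homomorphism on $\mathbb{Z}^m$; one then verifies translation invariance by carrying out the extension on a single fundamental cube and tiling.
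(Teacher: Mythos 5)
Your explicit sawtooth construction $h_n = \mathbf{g}\circ\phi_n$ and your closing remark about Theorem~\ref{Kirszbraun} are, in effect, the same route as the paper's proof, which sets the anchor map $\bar h(nk) = \mathbf{g}\left(k_1\lfloor s_1 n\rfloor + \cdots + k_m\lfloor s_m n\rfloor\right)$ on the sublattice $n\mathbb{Z}^m$, invokes the Kirszbraun theorem, and asserts, without further justification, that the resulting extension is $n$-translation invariant with the stated slope. Your parity analysis is a genuine improvement over that terse argument: you are right that $t_k\equiv n\pmod 2$ is forced by bipartiteness of $\mathcal{T}$, and you are right that any extension supported \emph{exactly} on $\mathbf{g}$ (which is what both your sawtooth map and the explicit Kirszbraun $\arg\max$ formula produce) needs every $t_k$ even to be $n$-translation invariant, because the edge coloring of a geodesic in the Cayley graph of $\langle\alpha_1,\ldots,\alpha_d\mid\alpha_i^2=e\rangle$ alternates with period two. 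Neither constraint is discussed in the paper.

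One correction, though. For $n$ odd the necessary parity ($t_k$ odd) and the sufficient parity for your on-geodesic construction ($t_k$ even) are mutually exclusive, so no $O(1)$ perturbation of $\lfloor s_k n\rfloor$ makes the construction work; and the same objection applies to the tiled Kirszbraun route you sketch at the end, since the Kirszbraun formula also places $h$ on $\mathbf{g}$ with $|h(y+n\vec{i}_k)|=|h(y)|+t_k$, whence the resulting dual map is $n$-periodic iff $t_k$ is even, and $t_k\equiv n\pmod 2$ alone does not suffice. To cover odd $n$ one must allow $h_n$ to make excursions off $\mathbf{g}$ (easy for $m=1$, a bit more delicate in higher dimension), or simply restrict the sequence to even $n$; the latter is harmless, since the lemma is used downstream only to ensure $\mathcal{H}^{\mathbf{g}}_n(s)\neq\emptyset$ along some sequence $n\to\infty$.
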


\begin{proof}
	The proof is a direct consequence of the Kirszbraun theorem. Let~$\mathbf{g}$ be a two-sided geodesic. For all $(k_1,..,k_m) \in \mathbb{Z}^m$, set $h(k_1n,..,k_mn)= \mathbf{g}(k_1\lfloor s_1n \rfloor +..+ k_m\lfloor s_mn \rfloor)$. Since $\geo$ is isomorphic to $\Z$ we can apply the Kirszbraun theorem between $\Z^m$ and $\geo$. Hence, there exist an extension of $h$ on the whole space $\Z^m$ which is entirely supported on the geodesic $\geo$. The slope of this extension must be $\left( \frac{\lfloor s_1n \rfloor}{n},..,\frac{\lfloor s_mn \rfloor}{n} \right)$ which concludes our proof.
\end{proof}

\begin{lemma}\label{p_auxiliary_ergodic}
Let $s \in (\mathbb{R}^+)^m$ be such that $0 <|s|_{\ell_1} < 1$ and let $\varepsilon > 0$ be small enough. Then there exist universal constants $C$ and $c$ independent of $n$ such that for all $n \in \N$
\begin{align}\label{e_last}
\mathbb{P}^s_n(|d_{\mathcal{T}}(h(0),h(x))-s \cdot x| \geq \varepsilon x) \leq C e^{-c\varepsilon^4 |x|_{\ell_1}}.
\end{align}
\end{lemma}

\begin{proof}
 Wlog.~we assume that $0 < \varepsilon < s_1$. We start with deducing the auxiliary estimate
\begin{align}\label{e_concentration_origin}
\mathbb{P}_n^{s} \left( d_{\mathcal{T}}(h(0),\mathbf{g})) \geq \varepsilon^2 |x|_{\ell_1} \right) \leq C e^{-c\varepsilon^4 |x|_{\ell_1}}
\end{align}
for some universal constants $c, C>0$.\newline
Indeed, let us recall the concentration inequality from Lemma \ref{p_azuma_distance}
\begin{align}
\mathbb{P}_n^{s} \left( \left| |h(x)|_0-\mathbb{E}[|h(x)|_0] \right| \geq \varepsilon |x|_{\ell_1} \right) \leq C e^{-c\varepsilon^2 |x|_{\ell_1}},
\end{align}
and also that (see the proof of Theorem \ref{main_concentration_section_3}) $$\mathbb{E}[|h(x)|_0]= s \cdot x.$$
By combining the two previous observations and summing over all possible $x \in S_n$ such that $|x|_{\ell_1} \geq k$ we obtain that
\begin{align}\label{e_sup_concentration}
\mathbb{P}_n^{s} \left( \sup_{x \in S_n: |x| \geq k} \left| |h(x)|_0- s \cdot x \right| \geq \varepsilon |x|_{\ell_1} \right) & \leq \sum_{|x| \geq k} C e^{-c\varepsilon^2 |x|_{\ell_1}}. \\
& \leq C e^{-c\varepsilon^2 k}. 
\end{align}
Let us now bound the probability that $h(0)$ is far from the geodesic $\mathbf{g}$. Suppose that for $x \in S_n$
 \[
 d_{\mathcal{T}}(h(0),\mathbf{g}) > \varepsilon |x|_{\ell_1}.
 \]
 Recall by convention of the set $\mathcal{H}^\mathbf{g}_n(s)$ that $\Pi_{\mathbf{g}}(h(0))=\mathbf{g}(0)$ or $\Pi_{\mathbf{g}}(h(0))=\mathbf{g}(1)$. Walking along the $x_1$-axis, we denote by $k$ the first integer such that $h(k\vec{i}_1)=\mathbf{g}(0)$ or $h(k\vec{i}_1)=\mathbf{g}(1)$. Notice that we must necessarily have $k \geq \varepsilon |x|_{\ell_1}$. Moreover, by convention we have~$|\mathbf{g}(0)|=0$. Therefore, we get
 $$||h(k\vec{i}_1)|_0 - s_1 k | \geq |h(0)|+s_1 k -1 \geq s_1 \varepsilon |x|_{\ell_1} \geq \varepsilon^2 |x|_{\ell_1}.$$
This implies by using \eqref{e_sup_concentration} the desired estimate~\eqref{e_concentration_origin}.\\
 
Let us now turn to the verification of ~\eqref{e_last}. Using the triangle inequality yields that
\[
|h(x)|_0 \leq d_{\mathcal{T}}(h(0),h(x)) \leq |h(x)|_0+ 2d_{\mathcal{T}}(h(0),\mathbf{g}).
\]
Hence
\begin{eqnarray}
\mathbb{P}_n^{s} \left( |d_{\mathcal{T}}(h(0),h(x))-s \cdot x| \geq \varepsilon |x|_{\ell_1} \right) & \leq & \mathbb{P}^s_n(||h(x)_0|-s \cdot x| \geq \frac{\varepsilon}{2} |x|_{\ell_1}) +\\
& & \mathbb{P}^s_n(|d_{\mathcal{T}}(h(0),\mathbf{g}))| \geq \frac{\varepsilon}{4} |x|_{\ell_1}).
\end{eqnarray}
since for the left-hand event to happen one of the two right-hand events must happen. Using \eqref{e_sup_concentration} and \eqref{e_concentration_origin} together this implies that 
\[
\mathbb{P}_n^{s} \left( |d_{\mathcal{T}}(h(0),h(x))-s \cdot x| \geq \varepsilon |x|_{\ell_1} \right) \leq C e^{-c\varepsilon^4 |x|_{\ell_1}}
\]
for some universal constants $C,c>0$ independent of $n$. This completes the argument.
\end{proof}

Let us now prove the existence of a continuum of ergodic gradient Gibbs measures.

\begin{theorem} \label{ergodic}
	For all $s \in (\mathbb{R}^+)^m$ such that $0 < |s|_{\ell_1} < 1$, there exists an ergodic gradient Gibbs measure $\mu$ with slope $s$.
\end{theorem}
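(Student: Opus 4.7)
The plan is to construct a shift-invariant gradient Gibbs measure of slope $s$ and then invoke the ergodic decomposition to extract an ergodic component with the same slope. For each $n$, let $\mathbb{P}_n^{(s)}$ denote the uniform probability measure on $\mathcal{H}_n^{\mathbf{g}}(s)$, which is nonempty by the preceding lemma. Identifying every graph homomorphism $h$ with its dual edge labeling $\tilde h$ valued in the finite alphabet $\{\alpha_1,\ldots,\alpha_d\}$, I view $\mathbb{P}_n^{(s)}$ as a Borel probability measure on the compact product space $\{\alpha_1,\ldots,\alpha_d\}^{E(\mathbb{Z}^m)}$. The measure $\mathbb{P}_n^{(s)}$ is already $n\mathbb{Z}^m$-invariant by construction; to obtain full $\mathbb{Z}^m$-invariance I symmetrize by setting
$$\tilde\mu_n := \frac{1}{n^m}\sum_{x\in S_n} T_x^* \mathbb{P}_n^{(s)},$$
where $T_x$ is the shift by $x$. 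By compactness of the configuration space in the product topology, I extract a weakly convergent subsequence $\tilde\mu_{n_k} \to \mu$; the limit $\mu$ is automatically shift-invariant.

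Next I would verify that $\mu$ is a gradient Gibbs measure. Each $\mathbb{P}_n^{(s)}$ satisfies the local DLR equations: conditioned on the edge labeling outside any finite window $\Lambda \subset \mathbb{Z}^m$, the conditional law on $\Lambda$ is uniform over all graph-homomorphism extensions of the fixed exterior. The symmetrization $\tilde\mu_n$ inherits this because translations act as automorphisms of the specification. Since the specification depends on finitely many coordinates, takes values in a finite alphabet, and has nonempty extension sets by the Kirszbraun theorem (Theorem~\ref{Kirszbraun}), it is continuous on valid configurations, so the DLR equations pass to the weak limit. The critical step is to show that $\mu$ has slope exactly $s$. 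For fixed $N \in \mathbb{N}$ and $1 \leq i \leq m$, Theorem~\ref{main_concentration} yields
$$\mathbb{P}_n^{(s)}\Bigl(d_{\mathcal{T}}(h(N\vec{i}_i),\mathbf{g}(\lfloor s_i N \rfloor)) \geq \varepsilon N\Bigr) \leq C e^{-\varepsilon^2 n/2^{12}}$$
for $n$ sufficiently large, with the analogous bound at $h(0)$. Averaging transfers these bounds to $\tilde\mu_n$, and passing to the weak limit gives that $\mu$-almost surely $d_{\mathcal{T}}(h(0),h(N\vec{i}_i))/N \to s_i$ as $N \to \infty$, so $\mu$ has slope $s$.

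Finally, I apply the ergodic decomposition of Definition~\ref{d_decompostion_shift_invariant_Gibbs_measures} to write $\mu = \int \nu \, w_\mu(d\nu)$ as a mixture of ergodic gradient Gibbs measures $\nu \in ex\mathcal{G}(\mathbb{Z}^m,\mathcal{T}_d)$. Under each ergodic $\nu$ the slope $s(\nu)$ is deterministic (by the subadditive ergodic theorem), and its $w_\mu$-average equals the slope $s$ of $\mu$. Because $\mu$ itself concentrates on configurations with slope $s$, $w_\mu$-almost every ergodic component $\nu$ must satisfy $s(\nu) = s$; in particular at least one such $\nu$ exists, which is the desired ergodic measure. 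The main obstacle I anticipate is justifying rigorously that the DLR property passes to the weak limit in the gradient setting, where one effectively works with equivalence classes modulo a global translation in $\mathcal{T}$; the concentration-based slope identification and the ergodic decomposition step are comparatively routine once the Gibbs property is established.
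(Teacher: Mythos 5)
Your proposal follows essentially the same route as the paper: start from the uniform measures on $\mathcal{H}_n^{\mathbf{g}}(s)$, extract a weak subsequential limit in the space of gradient Gibbs measures, identify the slope of the limit via the concentration inequality of Theorem~\ref{main_concentration}, and conclude by extracting an ergodic component from the ergodic decomposition. The only cosmetic differences are that you make the Ces\`aro symmetrization over shifts explicit (the paper instead notes that shift-invariance already holds at the level of the gradient configurations in $\mathcal{H}_n^{\mathbf{g}}(s)$ since those configurations are $n$-periodic) and you spell out the DLR argument that the paper delegates to Lemma~8.2.7 of~\cite{She05}; also, the paper establishes the slope of $\mu$ by a contradiction argument inside the ergodic decomposition rather than your direct almost-sure-convergence claim, which is the safer route since weak convergence plus the concentration bounds for each fixed finite window does not by itself yield almost sure convergence under $\mu$.
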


\begin{proof}[Proof of Theorem~\ref{ergodic}]

	We already know from the previous lemma that for all $n \in \mathbb{N}$ the set $\mathcal{H}^{\mathbf{g}}_n \left( \frac{\lfloor s_1n \rfloor}{n},..,\frac{\lfloor s_mn \rfloor}{n} \right)$ is non empty. Hence, the uniform probability measure~$\mu_n(s)$ on~$\mathcal{H}^{\mathbf{g}}_n \left( \frac{\lfloor s_1n \rfloor}{n},..,\frac{\lfloor s_mn \rfloor}{n} \right)$ exists. By compactness of the space of gradient Gibbs measures in the topology of local convergence, we can extract a subsequence $\mu_{n_k}(s)$
	which converges to a gradient Gibbs measure~$\mu$ on the infinite volume space~$\Z^m$ (see Lemma~8.2.7 of~\cite{She05} for more details). The shift invariance of the measure~$\mu$ follows from the shift invariance of the spaces $\mathcal{H}^{\mathbf{g}}_n \left( \frac{\lfloor s_1n \rfloor}{n},..,\frac{\lfloor s_mn \rfloor}{n} \right)$. This means that we can write the measure~$\mu$ as a mixture of ergodic Gibbs measures (see Definition~\ref{d_decompostion_shift_invariant_Gibbs_measures}). Suppose, by contradiction that the slope is not almost surely equal to $s$ on the average $w_{\mu}$. Then there exist $i \in \{1..m \}$, $\delta$ and $\varepsilon$ such that $w_{\mu}(|s_i(\nu)-s_i|\geq \varepsilon)\geq \delta$, wlog. we will assume that $i=1$.
	Applying Lemma~\ref{p_auxiliary_ergodic} we obtain that there exist universal constants $C$ and $c$ independent of $n_k$ such that:
	\begin{align}
	\mu_{n_k} \left( |d_{\tree}\left(h(0),h(l\vec{i}_1)\right)-ls_1| \geq \varepsilon l \right) \leq C e^{-c\varepsilon^4 l}.
 	\end{align}
	Dividing by $l$ we can rewrite this inequality as
	$$\mu_{n_k}\left( \left| \frac{1}{l}d_{\tree}\left(h(0),h(l\vec{i}_1))\right)-s_1 \right| \geq \varepsilon \right) \leq C e^{-c\varepsilon^4 l}.$$ Thus, by going to the subsequential limit first in $k$ and then in $l$ it holds that
	$$\lim_{l \rightarrow \infty} \mu \left( \left| \frac{1}{l}d_{\tree}\left(h(0),h(l\vec{i}_1)\right)- s_1 \right| \geq \varepsilon \right) =0. $$
	On the other hand, we have that
	\begin{align}
	&\lim_{l \rightarrow \infty} \mu \left( \left| \frac{1}{l}d_{\tree}\left(h(0),h(l\vec{i}_1)\right)- s_1 \right| \geq \varepsilon \right) \\
	& = \lim_{l \rightarrow \infty} \int_{ex\mathcal{G}(\Z^m,\T_d)} \nu\left( \left| \frac{1}{l}d_{\tree}\left(h(0),h(l\vec{i}_1)\right)- s_1 \right| \geq \varepsilon \right)w_{\mu}(d\nu) \\
	& \geq \lim_{l \rightarrow \infty} \int_{ \{|s_i(\nu)-s_i|\geq \varepsilon \} } \nu\left( \left| \frac{1}{l}d_{\tree}\left(h(0),h(l\vec{i}_1)\right)- s_1 \right| \geq \varepsilon \right)w_{\mu}(d\nu) \\
	& \geq \int_{\{|s_i(\nu)-s_i|\geq \varepsilon \}} \lim_{l \rightarrow \infty} \nu\left( \left| \frac{1}{l}d_{\tree}\left(h(0),h(l\vec{i}_1)\right)- s_1 \right| \geq \varepsilon \right)w_{\mu}(d\nu) \\
	& \geq \delta,
	\end{align}
	which is a contradiction. Therefore, the slope is $w_{\mu}$-almost surely equal to $(s_1,..,s_m)$ and there exists at least one ergodic gradient Gibbs measure with slope $(s_1,..,s_m)$.
\end{proof}

%
%

\section{Proof of the variational principle}\label{s_proof_variational_principle}

In this section we prove the results of Section~\ref{s_main_result}. More precisely, we give the proof of 
Lemma~\ref{p_extension_of_bhf}, Theorem~\ref{p_main_result_variational_principle} and Theorem~\ref{p_profile_theorem}.
We start with the proof of Lemma~\ref{p_extension_of_bhf}. The argument uses a standard construction to extend Lipschitz functions from the boundary~$\partial R$ to the whole set~$R$.

\begin{proof}[Proof of Lemma~\ref{p_extension_of_bhf}]
Let $(f,(a_{i,j})_{k \times k})$ be an asymptotic boundary height profile on $\partial R$ and define $g:R \to \R^+ \times \{1,..,k\}$ by:
\begin{align} \label{e_def_extension}
g^1(y) = & \max \{0, \max_{x\in \partial R}\{f^1(x)-|x-y|_{\ell_1}\} \\
g^2(y) = & f^2( \arg \max_{x\in \partial R}\{f^1(x)- |x-y|_{\ell_1}\}).
\end{align}
We will show that~$g$ extends~$f$ to an asymptotic height profile on the whole region $R$. In order to prove this we show the following three properties:
\begin{itemize}
\item $g=f$ on $\partial R$;
\item $g$ satisfies the condition~\eqref{e_asy_hf_lipschitz};
\item $g$ satisfies the condition~\eqref{e_asyhf_admissible}.
\end{itemize}
The first property is a simple consequence of the inequality~\eqref{e_asy_hf_lipschitz} that holds for the function~$f^1$. Indeed using~\eqref{e_asy_hf_lipschitz} yields that for all $x,y \in \partial R$ we have $\max_{x\in \partial \R}\{f^1(x)-d(x,y)\} \leq f^1(y)$ and thus $g^1(y)=f^1(y)$. \newline
For the second property, we first observe that by a combination of the triangle inequality and the fact that~$f^1$ satisfies~\eqref{e_asy_hf_lipschitz} on~$\partial R$, the function~$\tilde g^1 : R \to \R$ given by
\begin{align*}
\tilde g^1 (y) := \max_{x\in \partial R}\{f^1(x)-|x-y|_{\ell_1}\} 
\end{align*}
satisfies on~$R$ the condition~\eqref{e_asy_hf_lipschitz}. It is now a simple consequence that ~$g^1$ also satisfies~\eqref{e_asy_hf_lipschitz} on~$R$. \newline
Let us turn to the third property. We show the third property by contradiction. Let us assume that the map~$g$ does not satisfy the condition~\eqref{e_asyhf_admissible}. Then there is a point~$x \in \overline{g^{-1}(\mathbb{R}^+,i)} \cap \overline{g^{-1}(\mathbb{R}^+,j)} $ such that
\begin{align}
 g^1 (x) > a_{i,j}.
\end{align}
Then by continuity there are two points~$x_1,x_2 \in R$ such that
\begin{align}
 g_2(x_1) =i , \quad g_2(x_2)=j , \quad g^1(x_1) > a_{i,j}\geq 0, \quad \mbox{and} \quad g^1(x_2) > a_{i,j} \geq 0.
\end{align}
By definition~\eqref{e_def_extension} of the map~$g$ it follows that there is a point~$z_1 \in \partial R$ and~$z_2 \in \partial R$ such that
\begin{align} \label{e_estiamte_extension_contradiction_1}
g^1(x_1) = f^1(z_1) - |x_1 - z_1|_{\ell_1} > a_{i,j} \geq 0
\end{align}
and
\begin{align}
g^1(x_2) = f^1(z_2) - |x_2 - z_2|_{\ell_1} > a_{i,j} \geq 0.
\end{align}
Because we can choose~$|x_1 -x_2|_{\ell_1}$ to be arbitrarily small the last estimate yields that
\begin{align}\label{e_estiamte_extension_contradiction_2}
f^1(z_2) - |x_1 - z_2|_{\ell_1} \geq f^1(z_2) - |x_2 - z_2|_{\ell_1} - |x_1 -x_2|_{\ell_1} > a_{i,j} \geq 0.
\end{align} 
A combination of~\eqref{e_estiamte_extension_contradiction_1} and\eqref{e_estiamte_extension_contradiction_2} yields that 
 \begin{align}
 \left| f^1(z_1) - a_{i,j} \right| + \left| f^1(z_2) - a_{i,j} \right| & > |x_1 - z_1|_{\ell_1} + |x_1 - z_2|_{\ell_1} \\
 & \geq|z_1 - z_2|_{\ell_1}.
 \end{align}
The last estimate contradicts inequality~\eqref{e_extenability_asym_bhf} and completes the argument.
\end{proof}

Let us turn to the verification of Theorem~\ref{p_main_result_variational_principle} and Theorem~\ref{p_profile_theorem}. For the general strategy of the argument we refer to the discussion at the end of Section~\ref{s_main_result}. First, we turn to the proof of Theorem~\ref{p_profile_theorem}. As explained at the end of Section~\ref{s_main_result} the main ingredient is to show the equivalence of the entropy of fixed and fluctuating boundary conditions on a simplex. This ingredient is provided in Lemma~\ref{p_entropy_depth_condition_local_surface_tension} below. Before turning to Lemma~\ref{p_entropy_depth_condition_local_surface_tension}, we show in the next auxiliary lemma that if the boundary height function on a simplex is close to an affine function then it must stay close to a single geodesic. Similar to Remark~\ref{r_geodesic_effect_negligible} of Section~\ref{s_main_result}, this also justifies that the entropic effect, that results from the additional freedom of choosing the geodesic~$\mathbf{g}$, is of lower order.

\begin{definition}
A geodesic segment~$\mathbf{s}$ is given by an injective graph homomorphism~$\mathbf{s}: \left\{0, 1, \ldots k \right\} \to \mathcal {T}$ (cf.~Definition~\ref{d_geodesic}). 
\end{definition}

\begin{lemma}\label{p_entropic_effect}
We consider the simplex 
	\[
	\Delta_n= \{x \in \Z^d : 0 \leq x_1 \leq .. \leq x_m \leq n\}.
	\]
		Let $\delta > 0$, let $c > 0$ and let $s \in \R^m$ such that $|s|_{\ell_1} \leq 1 $. Let $h: \partial \Delta_n \to \mathcal{T}$ be a graph homomorphism such that for all $x \in \partial \Delta_n $:
	\begin{align}\label{e_condition_overestimating_local_entropy}
	| d_{\mathcal{T}} (h(x),\mathbf{r} ) - (c+s \cdot x) | \leq \delta n.
	\end{align}
	Then there exist a geodesic segment $\mathbf{s}$ such that for all $x \in \partial \Delta_n $:
	\begin{align}\label{e_reduction_of_depth_bc_to_geodesic_bc}
	d_{\tree}(h(x), \mathbf{s} \left(\lfloor c + s \cdot x \rfloor) \right)\leq 4\delta n,
	\end{align}
 where~$ \mathbf{s} (\lfloor c+s \cdot x \rfloor)$ denotes the unique element~$v \in \mathbf{s}$ such that~$$ d_{\mathcal{T}} (v, \mathbf{r}) = \lfloor c+ s \cdot x \rfloor.$$
\end{lemma}

\begin{proof}[Proof of Lemma~\ref{p_entropic_effect}]

We start by proving the following auxiliary statement: Let $h$ be a graph homomorphism satisfying the conditions of Lemma~\ref{p_entropic_effect}. If $\mathbf{p}=\{x_0,..,x_l\}$ is a path in $\partial \Delta_n$ on which the function $x \mapsto s \cdot x$ is non-decreasing then there exist a geodesic segment $\mathbf{s}\subset \mathcal{T}$ such that for all~$x \in \mathbf{p}$:
\begin{align}\label{e_reduction_of_depth_bc_to_geodesic_bc_vp}
d_{\tree}(h(x), \mathbf{s} (\lfloor c+s \cdot x \rfloor) \leq 2\delta n.
\end{align}

Argument for~\eqref{e_reduction_of_depth_bc_to_geodesic_bc_vp}: Let $\tilde{\mathbf{s}}$ denote the geodesic segment of~$h$ between the vertices $h(x_0)$ and $h(x_l)$. We then define $\mathbf{s}$ to be a geodesic segment which goes through the root $\mathbf{r}$, the vertex with lowest depth in $\mathbf{s}$ and $h(x_l)$. This construction insure that the depth is strictly increasing along $\mathbf{s}$. We suppose that there exist $x_k \in \mathbf{p}$ such that
$$d_{\tree}(h(x_k),\mathbf{s}(\lfloor c+ s \cdot x_k \rfloor) > 2\delta n.$$
	By definition of a geodesic segment in a tree, the path~$\left\{h(x_k) , \ldots ,h( x_l) \right\}$ has to return to~$\mathbf{s}$. Let ~$l \geq \tilde k > k$ denote the smallest number such that $h \left( x_{\tilde k} \right) \in \mathbf{s}$. Then it holds that $|h(x_{\tilde k})| < |h(x_k)| -2\delta n$. A direct calculation yields (cf.~the proof of Theorem \ref{main_concentration_section_3}) that
\begin{align}
2 \delta n & < |h(x_k)| - |h(x_{\tilde k})| \\
 & = |h(x_k)| - (c+s \cdot x_k) + (c+s \cdot x_k) \\
 & \qquad - (c+s \cdot x_{\tilde k}) + (c+s \cdot x_{\tilde k}) - |h(x_{\tilde k})| \\ 
& \leq \left||h(x_k)| - (c+s \cdot x_k) \right| + \left| (c+s \cdot x_{\tilde k}) - |h(x_{\tilde k})| \right|.
\end{align}
Hence, it follows that either
$$||h(x_{\tilde k})|- \left( c+s \cdot x_{\tilde k} \right)| > \delta n$$
or 
$$||h(x_k)|- \left( c+s \cdot x_k \right) |> \delta n, $$
which is a contradiction with our starting hypothesis and completes the proof of \eqref{e_reduction_of_depth_bc_to_geodesic_bc_vp}.\\

Let us now turn to the main argument. We distinguish the cases $m \neq 2$ and $m = 2$.

If $m \neq 2$ then for all $k \in \Z$ the sets $S_k= \{ x \in \partial \Delta_n : -k+1 \leq s \cdot x \leq k+1 \}$ are connected as subsets of the graph $\partial \Delta_n$. As a consequence we obtain that $h(S_k)$ is also a connected subset of the tree $\mathcal{T}$. Thus for all $x,y \in S_k$:
\begin{align}
 |d_{\mathcal{T}} (h(x),h(y) ) |\leq 2\delta n.
 \end{align} 
Moreover, both the maximum and the minimum of the function $x \mapsto s \cdot x$ on $\partial \Delta_n$ must be reached on some corner (non-necessarily unique) of the simplex $\Delta_n$. Consider two such corners $x_{\min}$ and $x_{\max}$ which reach respectively the minimum and the maximum of $h$ and define $\mathbf{p}$ to be an increasing path on $\partial \Delta_n$ between $x_{\min}$ and $x_{\max}$. We can use~\eqref{e_reduction_of_depth_bc_to_geodesic_bc_vp} to define a geodesic segment $\mathbf{s}$ such that for all $x \in \mathbf{p}$:
	\begin{align}
d_{\tree}(h(x), \mathbf{s} (\lfloor c + s \cdot x \rfloor) \leq 2\delta n.
\end{align}

 Since each vertex in $\mathbf{p}$ is in $h(S_k)$ for some $k \in \Z$, we must have that for all $x \in \partial \Delta_n: 	d_{\tree}(h(x), \mathbf{s} (\lfloor c + s \cdot x \rfloor)) \leq 4\delta n$. This concludes our proof for $m \neq 2$.\\

Let us now consider the case~$m=2$. The reasoning is similar, there must be two extremal vertices $x_{\min}$ and $x_{\max}$ of the simplex which minimize and maximize the function $x \mapsto s \cdot x$ on $\partial \Delta_n$. Moreover both geodesic paths from $x_{\min}$ to $x_{\max}$ are necessarily increasing for the function $$x \mapsto s \cdot x.$$ Hence we obtain that for all~$x \in \partial \Delta_n$:
\begin{align}\label{e_reduction_of_depth_bc_to_geodesic_bc}
d_{\tree}(h(x), \mathbf{s} ( \lfloor c+ s \cdot x \rfloor ) \leq 2\delta n,
\end{align}
where $\mathbf{s}$ is a geodesic segment defined as in the first part of this proof.
\end{proof}

We will now show that the entropy per site on a simplex~$\Delta_n$ for fluctuating linear boundary conditions is equivalent to the local surface tension as~$n \to \infty$.

\begin{lemma}\label{p_entropy_depth_condition_local_surface_tension}
	Under the same assumptions as in Lemma~\ref{p_entropy_depth_condition_local_surface_tension}, let us denote with~$M_{fluc}(\partial \Delta_n,s,\delta n)$ the set of graph homomorphisms $h: \Delta_n \rightarrow \tree$
	such that for all $x \in \partial \Delta_n $
		\begin{align}\label{e_condition_overestimating_local_entropy}
		| d_{\mathcal{T}} (h_{\partial \Delta_n}(x),\mathbf{r} ) -( c+s \cdot x) | \leq \delta n.
		\end{align}
	Then 
	\begin{align}
	- \lim_{n \to \infty} \frac{m!}{n^m} \ln |M_{fluc}(\partial \Delta_n,s,\delta n)|= \ent(s)+ \theta \left( \frac{1}{n}\right) +\theta (\delta),
	\end{align}
	where~$\ent(s)$ is the local surface tension given by Definition~\ref{e_def_micro_surface_tension}.
\end{lemma}

\begin{remark}
	Since the volume of the simplex $\Delta_n$ is $\frac{n^m}{m!}$, the quantity $ -\frac{m!}{n^m} \ln |M_{fluc}(\partial \Delta_n,s,\delta n)|$ can be understood as the entropy per site associated to fluctuating boundary conditions on $\partial \Delta_n$ with slope $s$. 
\end{remark}

We already deduced an analogous result for hypercubes in Theorem~\ref{geodesic_square}. Hence, the main idea of the proof of Lemma~\ref{p_entropy_depth_condition_local_surface_tension} consists of leveraging the result of Lemma~\ref{p_entropy_depth_condition_local_surface_tension} to simplices by a two-scale decomposition of a simplex into smaller cubes, and vice versa.

\begin{proof}[Proof of Lemma~\ref{p_entropy_depth_condition_local_surface_tension}]
 Let $h \in M_{fluc}(\partial \Delta_n,s,\delta n)$. By Lemma \ref{p_entropic_effect}, there exist a geodesic segment $\mathbf{s}\subset \mathcal{T}$ such that for all~$x \in \partial \Delta_n$:
	\begin{align}\label{e_reduction_of_depth_bc_to_geodesic_bc}
	d_{\tree}(h(x), \mathbf{s} (\lfloor c+ s \cdot x \rfloor) \leq 2\delta n.
	\end{align}
 We start with showing that the entropic effect of having different geodesic segments~$\mathbf{s}$ is negligible. Therefore, let us estimate how many different geodesic segment~$\mathbf{s}$ can exist. On~$\partial \Delta_n$ there are less than~$d^{n^{m-1}}$ possible distinct boundary graph homomorphisms. Hence, the number of possible distinct geodesic segments is also bounded by~$d^{n^{m-1}}=\theta(n^m)$, which is of lower order.\\

As a consequence let us fix a geodesic segment~$\mathbf{s}$. It is sufficient to show that for all $h_{\partial \Delta_n}$ such that
	\begin{align}\label{e_reduction_of_depth_bc_to_geodesic_bc}
	d_{\tree}(h(x), \mathbf{s} (\lfloor c+s \cdot x \rfloor) \leq 2\delta n,
	\end{align}
the number $|M(\Delta_n,h_{\partial \Delta_n})|$ of homomorphisms which extends the boundary height function $h_{\partial \Delta_n}$ to the whole $\Delta_n$ verifies
		\begin{align}\label{e_estimate_vp}
	- \lim_{n \to \infty} \frac{m!}{n^m} \ln |M(\Delta_n,h_{\partial \Delta_n})| = \ent(s)+ \theta\left( \frac{1}{n}\right) + \theta (\delta).
		\end{align}
 Indeed, since the number of different homomorphisms restricted to $\partial \Delta_n$ is bounded by $Cn^{m-1}= \theta(n^m)$, this directly implies the desired statement of Lemma~\ref{p_entropy_depth_condition_local_surface_tension}.\\

 In order to deduce the estimate~\eqref{e_estimate_vp} we consider the cases $|s|_{\ell_1}<1$ and $|s|_{\ell_1}=1$ separately.\\
	
\begin{figure} \centering
	\begin{subfigure}{0.4\textwidth}
		\begin{tikzpicture}
		\newcommand*\width{3.1}
		\newcommand*\num{20.5}
		\pgfmathparse{\width/\num}
		\let\step\pgfmathresult
		\begin{scope}
		\draw [black,fill=lightgray] (0.2,0.2)--(0.2,\width-0.4)--(\width-0.4,0.2)--cycle;	
		\draw [black,fill=lightgray] (\width-0.2,\width-0.2)--(0.4,\width-0.2)--(\width-0.2,0.4)--cycle;	
		\end{scope}
		\draw[thick] (0,0) -- (\width,0) -- (\width,\width) -- (0,\width) -- cycle;
		\draw[very thin,decorate,decoration=brace]
	 (\width-0.4,-0.05) -- 	node[below=2pt] {$n$} (0.4,-0.05) ;
		\draw[very thin,decorate,decoration=brace]
		(-0.05,0) -- node[left=2pt] {$n+2\frac{\delta n}{1-|s|_{\ell_1}}$} +(0,\width);
		\end{tikzpicture}
	\end{subfigure}\qquad%
	\begin{subfigure}{0.4\textwidth}
		\begin{tikzpicture}
		\newcommand*\width{3.1}
		\newcommand*\num{10.5}
		\pgfmathparse{\width/\num}
		\let\step\pgfmathresult
		\begin{scope}
		\clip (0,0) -- (\width,0) -- (0,\width) -- cycle;
		\fill[lightgray] (0.28,0.28) rectangle (\width,\width);
		\foreach \j in {0,...,\num}
		\fill[white] (\step*\j,{\step*(floor(\num)-\j-1)})
		rectangle +(\step,2*\step);
		\draw[step=\step,very thin] (0,0) grid (\width,\width);
		\end{scope}
		\draw[thick] (0,0) -- (\width,0) -- (0,\width) -- cycle;
		\draw[very thin,decorate,decoration=brace]
		({\step*(floor(\num/2)},-0.05) --
		node[below=2pt] {\makebox[10pt][l]{%
				$ \varepsilon n$}}
		+(-\step,0);
		\draw[very thin,decorate,decoration=brace]
		(-.05,0) -- node[left=2pt] {$n$} +(0,\width);
		\end{tikzpicture}
	\end{subfigure}
		\caption{ Visualization of the gluing procedures employed in the proof of Lemma \ref{p_entropy_depth_condition_local_surface_tension}. On the left, simplices of size $n$ are attached to a square of size $n+2\frac{\delta n}{1-|s|_{\ell_1}}$. On the right, a simplex of size $n$ is tiled with squares of size $\varepsilon n$ and the squares which touch the border strip of size $\delta n$ are removed. The resulting set $\mathcal{S}$ is in light gray.}
			\label{f_tiling}
	\end{figure}
	
In the case~$|s|_{\ell_1}=1$ we observe that by definition~$\ent(s)=0$. Hence, the verification of~\eqref{e_estimate_vp} is reduced to a straightforward combinatorial argument using Stirling formula which is left out in this article (see e.g \cite{CKP01} Lemma 3.5 or \cite{morales2018asymptotics} Lemma 5.1 for an analogous argument). \\

 Argument for the case $|s|_{\ell_1}<1$: In order to verify~\eqref{e_estimate_vp} it suffices to show that
	\begin{align}~\label{e_fixed_vs_free_bc_lower_estiamte}
	\ent_{ n }(s) \leq \Ent (\Delta_n, h_{\partial \Delta_n})+ \theta (\delta)+\theta \left( \frac{1}{n} \right) 
	\end{align}
	and 
	\begin{align}~\label{e_fixed_vs_free_bc_upper_estiamte}
	\ent_{ n }(s) \geq \Ent (\Delta_n, h_{\partial \Delta_n})+ \theta (\delta) +\theta \left( \frac{1}{n} \right) .
	\end{align}
 
	Argument for~\eqref{e_fixed_vs_free_bc_lower_estiamte}: The argument consists of dividing a large simplex in smaller cubes. We underestimate the entropy (i.e.~overestimate the number of configurations) by choosing independent periodic boundary conditions on each cube. This will yield the correct lower bound by Theorem~\ref{geodesic_square}. Let us now give the details of the argument. Consider an $m-$dimensional hypercube $H_{\varepsilon n}$ with edge size $\lfloor \varepsilon n \rfloor$ and lower left corner at the origin. We use now the convention that
	\[
	h^\star_{\partial H_{\varepsilon n}}=\argmax_{h \in \mathcal{H}^{\mathbf{g}}_{\varepsilon n} (s) }|M(H_{\varepsilon n}, h_{\partial H_{\varepsilon n}})|,
	\]
where~$h_{\partial H_{\varepsilon n}}$ is the restriction of~$h: H_{\varepsilon n} \to \mathcal {T}$ to the boundary~$\partial H_{\varepsilon n}$. This means that $h^\star_{\partial H_{\varepsilon n}}$ is the periodic boundary data of slope $s$ on $\partial H_{\varepsilon n}$ which has the most possible extensions to a graph homomorphism $H_{\varepsilon n}$ on among all periodic boundary data of slope $s$ on $\partial H_{\varepsilon n}$.

 By definition of $\mathcal{H}^{\mathbf{g}}_{\varepsilon n} (s)$, it follows that $|\mathcal{H}^{\mathbf{g}}_{\varepsilon n} (s)|$ is bounded by $|M(H_{\varepsilon n}, h^{\star})|$ times the number of possible boundary homomorphisms on $H_{\varepsilon n}$. More precisely, 
	\[
	|\mathcal{H}^{\mathbf{g}}_{\varepsilon n} (s)| \leq d^{2^m(\varepsilon n)^{m-1}}|M(H_{\varepsilon n}, h^\star_{\partial H_{\varepsilon n}})|.
	\]
	Now we fix~$\varepsilon>0$ small enough and define for~$k_1,..,k_m \in \Z$ the translated hypercube
	$$H_{\varepsilon n}^{k_1,..,k_m}=H_{\varepsilon n}+(k_1 \lfloor \varepsilon n \rfloor,..,k_2 \lfloor \varepsilon n \rfloor).$$
	Let us consider the subset $\mathcal{S}$ of $\Z^m$ 
	$$\mathcal{S}=\{(k_1,...,k_m):H_{\varepsilon n}^{k_1,..,k_m} \subset \Delta_n \text{ and } d_{\Z^m}(H_{\varepsilon n}^{k_1,..,k_m},\partial \Delta_n) \geq 2\delta n\}.$$
 The set~$\mathcal{S}$ indexes all hypercubes~$H_{\varepsilon n}^{k_1,..,k_m}$ that are contained in the simplex~$\Delta_n$ and not too close to the boundary (cf.~the right side of Figure~\ref{f_tiling}). The cardinality of all sites contained in those boxes (represented in light grey in the right side of Figure~\ref{f_tiling}) is bounded from below by $$\frac{1}{\varepsilon^m m!}-\frac{2^{m+1}(\delta+2\varepsilon)}{\varepsilon^{m}}. $$
 Here, the second term of the bound comes from the maximum number of cubes that are removed from the strip of size $2\delta n$ along the border of the simplex. Moreover, since the boundary homomorphism $h^\star_{\partial H_{\varepsilon n}}$ is $\lfloor \varepsilon n\rfloor $-periodic, there exist a graph homomorphism $$g: \cup_{k_1,...,k_m\in \mathcal{S}} H_{\varepsilon n}^{k_1,..,k_m} \to \mathcal{T}$$ such that for all $k_1,...,k_m\in \mathcal{S}$:
	\[
	\tilde{g}_{\partial H_{\varepsilon n}^{k_1,..,k_m}}=	\tilde{h}^\star_{\partial H_{\varepsilon n}} \circ \tau_{(k_1 \lfloor \varepsilon n \rfloor,..,k_2 \lfloor \varepsilon n \rfloor)} .
	\]
Here, $\tilde{g}$ and $\tilde{h}^\star$ are the gradient graph homomorphisms associated to $g$ and $h^\star$ respectively, and $\tau_{x}$ is the shift by $x$ in $\Z^m$. Using Lemma~\ref{p_entropic_effect}, we observe that there exist a one sided geodesic $\mathbf{g}$ such that
\[
	\sup_{\partial H_{\varepsilon n}} 	| d_{\mathcal{T}}(g(x), \mathbf{g} ( \lfloor s \cdot x \rfloor ) | \leq \varepsilon n.
\]
We choose wlog.~$\mathbf{g}$ to be the same geodesic for both $g$ and $h_{\partial \Delta_n}$.

	As a consequence, it is possible for $\frac{1-|s|}{\varepsilon} < \delta$ to extend the homomorphisms $g$ to $h_{\partial \Delta_n}$ on $\partial \Delta_n$ (use a similar calculation as in equation \eqref{e_Kirszbraun_estimate} of Lemma \ref{p_comparison_ent_n_ent_m} and the Kirszbraun theorem for graphs). If we combine the previous observations we obtain that:
	\begin{eqnarray}
	\frac{m!}{n^m}\ln |M(\Delta_n, h_{\partial \Delta_n})| & \geq & \frac{m!}{n^m} \left(\frac{1}{\varepsilon^mm!}-\frac{2^{m+1}(\delta+2\varepsilon)}{\varepsilon^{m}}\right) \ln |M (H_{\varepsilon n},h^{\star}_{H_{\varepsilon n}})| \\
	& \geq & \left( 1- 2^{m+1}(\delta+2\varepsilon) \right) \frac{1}{\varepsilon^m n^m}\ln |M (H_{\varepsilon n},h^{\star}_{H_{\varepsilon n}})| \\
	 & \geq & \left(1-\theta(\delta )+\theta (\varepsilon) \right)\ent(s)+\theta \left( \frac{1}{\varepsilon n} \right). 
	\end{eqnarray}
	Sending now first $n \to \infty$ and then $\varepsilon \to 0$ yields the desired estimate~\eqref{e_fixed_vs_free_bc_lower_estiamte}.\\

	Argument for \eqref{e_fixed_vs_free_bc_upper_estiamte}: Let us describe the main strategy. We will overestimate the entropy on a cube by underestimating the possible number of configurations. We observe that on the continuum there is a natural decomposition of a cube into $m!$~simplices. We now leverage this decomposition onto the lattice, fitting $m!$~simplices of size $n$ in a cube of size slightly larger than $n$. We now underestimate the number of configurations by fixing the boundary values on each simplex. Because we leave enough space between the simplices, we can glue together the boundary graph homomorphisms by the Kirszbraun theorem. Now, the entropy per site on each simplex is an upper bound for the entropy per site on the large cube yielding the correct estimate by Theorem~\ref{geodesic_square}.\\
 
 Let us now outline the details of the construction. We can tile the hypercube $H_{n+2\frac{\delta n}{1-|s|_{\ell_1}}}$ of size $n+2\frac{\delta n}{1-|s|_{\ell_1}}$ using $m!$ simplices of size $n$ each within distance $2\frac{\delta n}{1-|s|_{\ell_1}}$ of each other (see the left side of the Figure~\ref{f_tiling}). Let us now explain how to choose the boundary condition on each simplex. The boundary condition will be fixed by a graph-homomorphism~$g \in \mathcal{H}^{\mathbf{g}}_{n+2\frac{\delta n}{1-|s|_{\ell_1}}} (s)$. Let $h: \partial \Delta_n \to \mathcal{T}$ be a graph-homomorphism such that 
	\begin{align}\label{e_condition_overestimating_local_entropy}
	\sup_{x \in \partial {\Delta_n}} | d_{\mathcal{T}} (h(x),\mathbf{r} ) - (c+s \cdot x) | \leq \delta n.
	\end{align}
 We observe that the all simplices in the cube are characterized by $m$-permutations~$\sigma$. For a given permutation~$\sigma$ we define~$\Delta^{\sigma}_n$ as the simplex obtained by permuting the coordinates of~$\Delta_n$ according to~$\sigma$. Let us denote by $h^{\sigma}$ the graph-homomorphisms on ${\partial \Delta^{\sigma}_n}$ obtained by permuting the coordinates components of $h$ according to $\sigma$. We set $g=h^{\sigma}$ on the boundary of each one of those simplices. Moreover, by the Kirszbraun theorem (see also equation~\eqref{e_Kirszbraun_estimate} of Lemma \ref{p_comparison_ent_n_ent_m}) we can extend the homomorphism $g$ to $H_{n+2\frac{\delta n}{1-|s|_{\ell_1}}}$. Now, we underestimate the number of graph homomorphisms in~$\mathcal{H}^{\mathbf{g}}_{n+2\frac{\delta n}{1-|s|_{\ell_1}}} (s)$ by
 \begin{align}
 & |\mathcal{H}^{\mathbf{g}}_{n+2\frac{\delta n}{1-|s|_{\ell_1}}} (s)| \geq \\
 & \left\{h \in \mathcal{H}^{\mathbf{g}}_{n+2\frac{\delta n}{1-|s|_{\ell_1}}} (s) \ | \ h(x)= g(x) \ \mbox{for all $x$ } \right. \\
 & \qquad \left. \mbox{that are not in the interior of a simplex} \right\}.
 \end{align}
 By self-similarity of the simplices this implies that
	\begin{eqnarray}
	 |M(\Delta_n,h_{\partial \Delta_n})|^{m!} \leq |\mathcal{H}^{\mathbf{g}}_{n+2\frac{\delta n}{1-|s|_{\ell_1}}} (s) |
	\end{eqnarray}
	and therefore
	\begin{eqnarray}
	\frac{1}{m!} \ln |M(\Delta_n,h_{\partial \Delta_n})| \leq -\ent(s)+\theta(\delta)+\theta \left( \frac{1}{n} \right),
	\end{eqnarray}
	which verifies~\eqref{e_fixed_vs_free_bc_upper_estiamte}.
\end{proof}

\begin{figure}


 \includegraphics[width=0.6\textwidth]{grid.png}
\caption{Illustration of the set~$R$. The grid is the set~$R_{\tt{grid, \varepsilon}}.$}\label{f_set_R_with_grid}
\end{figure}

\begin{figure}





 \includegraphics[width=0.55\textwidth]{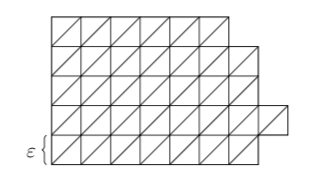}

\caption{Approximation of the set~$R$ in the simplicial complex $\mathcal{K}_{\varepsilon}$. }\label{f_block_approximation_of_R}
\end{figure}

In the proof of Theorem~\ref{p_profile_theorem} we use the observation that $1$-Lipschitz functions can be approximated very well by affine functions on a simplicial complex.

\begin{definition}\label{d_linear_approximation}
	We denote by $\mathcal{K}_{\varepsilon}$ the simplicial lattice of $\R^m$ formed by the points on the boundary of the simplices 
	\[
	\{x \in \R^m: 0 \leq x_{\sigma(1)}-\varepsilon k_1\leq ...x_{\sigma(m)}-\varepsilon k_m \leq \varepsilon\}
	\]
	for any $m$-permutation $\sigma$ and $(k_1,..,k_m) \in \Z^m$.
 Let~$f_{\varepsilon}^1$ denote a function on~$R$ such that~$f_{\varepsilon}^1$ is piecewise affine on the simplicial lattice~$\mathcal{K}_{\varepsilon} \cap R$ and~$ f_{\varepsilon}^1(x) = f^1(x)$ for all vertices~$x \in R$ of the simplicial lattice~$\mathcal{K}_{\varepsilon}\cap R$. The function~$f_{\varepsilon}^1$ is called affine approximation of~$f^1$.
\end{definition}

\begin{remark}
	Using our previous notation in Definition \ref{d_ball_around_h} we have that $\mathcal{K}_{\frac{\lfloor \varepsilon n \rfloor}{n} }=\frac{1}{n}\mathcal{K}^n_{\varepsilon}$. 
\end{remark}

\begin{lemma}[Approximation via triangulation] \label{lem_approx_tri}
Let~$f^1_{\varepsilon}$ be an affine approximation of $f^1$ as defined above. For $\delta > 0$ let $K_{\varepsilon}(R, \delta)$ be the union of all simplices $\Delta \in \mathcal{K}_{\varepsilon}$ strictly included in $R$ and such that:
	\[
	\sup_{x \in \partial \Delta}|f^1_{\varepsilon}(x)-f^1(x)| \leq \frac{\delta}{2} \varepsilon.
	\]
.
	 
Then it holds that for $\varepsilon > 0$ sufficiently small
	\begin{enumerate}
 \item $|R \setminus K_{\varepsilon}(R, \delta)| = \theta(\varepsilon)$, \\[-1ex]
 
		\item $\E(f^1_{\varepsilon})= \E(f^1)+\theta(\epsilon)$.
	\end{enumerate}
\end{lemma}
The statement of Lemma~\ref{lem_approx_tri} is proven in the two dimensional case in Lemma 2.2 and Lemma 2.3 of~\cite{CKP01}. The proof does not involve any argument specific to the $2$-dimensional case and can be naturally generalized to $m$-dimensional simplices. Hence we are not reproving Lemma~\ref{lem_approx_tri} in this article. Let us now turn to the proof of Theorem~\ref{p_profile_theorem}.

\begin{proof}[Proof of Theorem \ref{p_profile_theorem}]
Recall the definition of the sets

\begin{align}\label{e_balls_around_AHP}
& HP_n (f, \varepsilon, \delta) \\
&= \left\{ h_{n} \in M (R_n, h_{\partial R_n}) \ | \ \sup_{x \in \frac{1}{n}\mathcal{K}_n^{ \varepsilon} \cap \frac{1}{n}R_n } \left| \frac{1}{n} d_{\mathcal{T}} (h_n (x), \mathbf{r}) - f^1 \left( \frac{x}{n} \right) \right| \leq \varepsilon \delta \right\}.
\end{align} 

For $\varepsilon >0$ and $n \in \N $, denote $\varepsilon_n= \frac{\lfloor \varepsilon n \rfloor}{n}$. For fixed $\varepsilon, \delta >0$ and given the asymptotic height profile~$(f, a_{i,j})$ let us consider the affine approximation $f^1_{\varepsilon_n}$ of $f^1$ on the simplicial lattice $\mathcal{K}_{\varepsilon_n}$ (see Definition~\ref{d_linear_approximation}). It follows from Lemma~\ref{lem_approx_tri} that $$\E(f^1_{\varepsilon_n})=\ \E(f^1)+\theta(\epsilon)+\theta\left (\frac{1}{n} \right).$$
Hence, Theorem~\ref{p_profile_theorem} is a consequence of the following statement:
\begin{align}\label{e_limit_HP}
\lim_{n \to \infty} \frac{1}{n^m} \ln |HP_n (f, \delta, \varepsilon)| = - \E(f_{\varepsilon_n}
) + \theta_{\varepsilon}(\delta) + \theta(\varepsilon) .
\end{align}

We also know from property (1) of Lemma~\ref{lem_approx_tri} that $$|K_{\varepsilon}(R,\delta)|=|R|+\theta(\varepsilon).$$ 

Let us now start by underestimating the number of homomorphisms in $HP_n (f, \delta, \varepsilon)$.
We begin by proving that $HP_n (f, \delta, \varepsilon)$ is non-empty for $n$ large enough. Define $g_n: R_n \to \mathcal{T}$ to be the graph homomorphism such that for all $x \in R_n$: 
$$g_n(x)= \begin{cases}
\mathbf{g}_{f^2(\frac{x}{n})}(\lfloor f^1 (\frac{x}{n}) \rfloor ) , & \mbox{if~$x$ and $ \mathbf{g}_{f^2(\frac{x}{n})}(\lfloor f^1 (\frac{x}{n}) \rfloor )$ have same parity},\\
\mathbf{g}_{f^2(\frac{x}{n})}(\lceil f^1 (\frac{x}{n}) \rceil ) , & \mbox{if~$x$ and $\mathbf{g}_{f^2(\frac{x}{n})}(\lceil f^1 (\frac{x}{n}) \rceil )$ have same parity}.
\end{cases}
$$
According to the definition of the convergence of a sequence of boundary height functions, for $n$ large enough the following holds:
\[
\sup_{x \in \partial R_n} d_{\mathcal{T}}(g_n(x),h_{\partial R_n}(x)) \leq \frac{\varepsilon \delta}{2} n.
\]
Hence we can apply Corollary \ref{p_corollary_kirszbraun} to the homomorphism $g_n$ and any graph homomorphism which extends $h_{\partial R_n}$ on $R$ to deduce that there exist at least one homomorphism $\bar{h}$ (depending on $n$) defined on $R_n$ in $HP_n (f, \frac{\delta}{2}, \varepsilon)$.

For a given simplex $\Delta \in \mathcal{K}_{\varepsilon_n}$ with $\varepsilon_n= \frac{\lfloor \varepsilon n \rfloor}{n}$, denote by $n\Delta$ the same simplex scaled by $n$ and belonging to $\mathcal{K}^n_\varepsilon$. 
It follows directly from the definition of $K_{\varepsilon_n}(R,\delta)$ that for all simplex $\Delta \in K_{\varepsilon_n}(R,\delta)$:
$$ \sup_{x \in \partial \Delta} |\frac{1}{n}d_{\mathcal{T}}(\bar{h}(x),\mathbf{r})-f^1_{\varepsilon_n} \left( \frac{x}{n} \right)| \leq \varepsilon \delta .$$
As a consequence for all $\Delta \in K_{\varepsilon_n}(R,\delta)$,
the homomorphism $ \bar{h}$ is in $ M_{fluc}(\partial (n\Delta), \nabla f^1_{\varepsilon_n}, \epsilon\delta n )$.

We can now underestimate the number of homomorphisms in $HP_n (f, \delta, \varepsilon)$ by counting all configurations exactly equal to $\bar{h}$ on the boundary of the simplices $n\Delta$ such that $\Delta \in K_{\varepsilon_n}(R,\delta)$. This gives us:

\begin{eqnarray}
\frac{1}{n^m} \ln |HP_n (h, \delta, \varepsilon)| & \geq & \frac{1}{n^m} \ln \prod_{\Delta \in K_{\varepsilon_n}(R,\delta) } \left| M(n\Delta,\bar{h}_{\partial (n\Delta)}) \right| \\
 & \geq & \frac{1}{n^m} \sum_{\Delta \in K_{\varepsilon_n}(R,\delta) } \ln \left| M(\Delta,\bar{h}_{\partial (n\Delta)}) \right| \\
 & \geq & \frac{\varepsilon_n^m}{m!} \sum_{\Delta \in K_{\varepsilon_n}(R,\delta) } \left( \ent(\nabla f^1_{\varepsilon_n})+\theta_{\varepsilon}(\delta)+\theta \left(\frac{1}{\varepsilon n} \right) \right) \\
 & \geq & -\E(f^1_{\varepsilon_n})+\theta_{\varepsilon}(\delta)+\theta \left( \frac{1}{\varepsilon n} \right),
\end{eqnarray}
where in the third inequality we used Lemma~\ref{p_entropy_depth_condition_local_surface_tension} and in the last inequality we used that by definition $$ \frac{\varepsilon_n^m}{m!} \sum_{\Delta \in K_{\varepsilon_n}(R,\delta) } \ent(\nabla f^1_{\varepsilon_n})=-\E(f^1_{\varepsilon_n}).$$ 

Let us now turn to the overestimation of the number of graph homomorphisms in $|HP_n (f, \delta, \varepsilon)|$. Any function which is within $\varepsilon \delta$ of $f^1$ on $K_{\varepsilon_n}(R,\delta )$ must be within $2 \varepsilon \delta$ of $f^1_{\varepsilon_n}$ on $K_{\varepsilon_n}(R,\delta )$. Hence, the strategy is to look at the free product of all configurations $h$ such that $d_{\T}(h,\mathbf{r})$ stays within $2 \varepsilon \delta n$ of $f^1_{\varepsilon_n}$ on each rescaled simplex $n\Delta$ for which $\Delta \in K_{\varepsilon_n}(R,\delta )$. We subsequently multiply this number by $d^{\left( |R|-|K_{\varepsilon_n}(R,\delta)| \right)n^m}$ which is a natural upper bound on the number of different configurations outside of those simplices. This gives an upper bound on the total of possible homomorphisms in $|HP_n (f, \delta, \varepsilon)|$. Using the same notations as in Lemma~\ref{p_entropy_depth_condition_local_surface_tension}, we obtain:
\begin{align}
 \frac{1}{n^m} \ln |HP_n (h_{\varepsilon_n}, \delta, \varepsilon)| \\
 \leq & \frac{\ln d^{\left( | R| - |K_{\varepsilon_n}(R,\delta) | \right)n^m}}{n^m} \prod_{\Delta \in K_{\varepsilon_n}(R,\delta) } |M_{fluc}(\Delta_{\varepsilon n},\nabla f^1_{\varepsilon_n},\delta \varepsilon n)|\\
 \leq & (\ln d) \theta(\varepsilon)+ \frac{1}{n^m} \sum_{\Delta \in K_{\varepsilon_n}(R,\delta) } \ln |M_{fluc}(\Delta_{\varepsilon n},\nabla f^1_{\varepsilon_n},\delta \varepsilon n)| \\
 \leq & \frac{\varepsilon_n^m}{m!}\sum_{\Delta \in K_{\varepsilon_n}(R,\delta) } \left(\ent(\nabla f^1_{\varepsilon_n})+\theta_{\varepsilon}(\delta)+\theta \left( \frac{1}{\varepsilon n} \right) \right)+ \theta(\varepsilon). \\
 \leq & -\E (f^1_{\varepsilon_n})+\theta(\varepsilon)+\theta_{\varepsilon}(\delta)+ \theta \left( \frac{1}{\varepsilon n} \right).
\end{align}

Here we used the estimate of Lemma~\ref{p_entropy_depth_condition_local_surface_tension} to deduce the third inequality. Combining the overestimation and the underestimation yields the desired formula~\eqref{e_limit_HP}.
\end{proof}

We deduce Theorem~\ref{p_main_result_variational_principle} from Theorem~\ref{p_profile_theorem} using a compactness argument similar to the one used in~\cite{CKP01}.

	\begin{proof}[Proof of Theorem~\ref{p_main_result_variational_principle}]
		For a fixed asymptotic boundary height profile~$ \left( f_{\partial R}, \left( a_{i,j}\right)_{k \times k} \right)$ let us consider the set~$AHP(f_{\partial R}, \left( a_{i,j}\right)_{k \times k})$ of all possible extensions to an asymptotic height profile~$ \left( f, \left( a_{i,j}\right)_{k \times k} \right)$ (see~Definition~\ref{d_asymp_height_profile}). We observe that $$AHP(f_{\partial R}, \left( a_{i,j}\right)_{k \times k}) \neq \emptyset.$$ We observe that the space~$AHP(f_{\partial R}, \left( a_{i,j}\right)_{k \times k})$ is closed and that the local surface tension~$\ent$ is convex (cf.~Theorem~\ref{p_convexity_local_surface_tension}) and uniformly bounded from below by~$-d$. Hence, it follows that
		\begin{align}\label{e_inf_is_min}
		\inf_{f\in AHP(f_{\partial R}, \left( a_{i,j}\right)_{k \times k})} \E \left(f^1 \right) & = \min_{f\in AHP(f_{\partial R}, \left( a_{i,j}\right)_{k \times k})} \E \left(f^1 \right) \\
		& = \E \left(f^1_{\min} \right) ,
		\end{align}
		where~$(f^1_{\min}, a_{i,j})$ is a minimizer of the macroscopic entropy~$\E ( f^1)$.\\
		
		Let us fix an~$\eta>0$. We have to show that there exists an integer~$n_0 \in \mathbb{N}$ such that for all~$n \geq n_0$ it holds
		\begin{align}\label{e_variational_principle_upper_bound}
		\Ent \left( R_n, h_{\partial R_n}\right) \leq \E \left(f^1_{\min} \right) + \eta
		\end{align}
		and
		\begin{align}\label{e_variational_principle_lower_bound}
		\Ent \left( R_n, h_{\partial R_n}\right) \geq \E \left(f^1_{\min} \right) - \eta. 
		\end{align}
		\mbox{}\\

		We start with deducing the estimate~\eqref{e_variational_principle_upper_bound}. Underestimating the number of graph homomorphisms and using the identity~\eqref{e_profile_theorem} of Theorem \ref{p_profile_theorem} yields that 
		\begin{align}
		\Ent \left( \Lambda_n, h_{\partial R_n}\right) & = - \frac{1}{|R_n|} \ln | M (R_n, h_{\partial R_n}) | \\
		& \leq - \frac{1}{|R_n|} \ln | HP_n \left( f_{\min}, \delta, \varepsilon \right) | \\
		& = \E \left(f^1_{\min} \right)+ \theta(\varepsilon)+ \theta_{\varepsilon} (\delta) + \theta_{\delta,\varepsilon} \left( \frac{1}{ n} \right). \label{e_proof_variational_principle_upper_bound_calculation_1}
		\end{align} 
		Choosing now first~$\varepsilon>0$ small then ~$\delta>0$ small (depending on $\varepsilon$), and finally~$n$ large (depending on~$\varepsilon$ and $\delta$) yields the desired upper bound~\eqref{e_variational_principle_upper_bound}. \\

		Let us now deduce the lower bound~\eqref{e_variational_principle_lower_bound}. This is equivalent to showing that for all $\eta >0$:
		\begin{align}
		- \lim_{n \to \infty} \frac{1}{|R_n|} \Ent \left( \Lambda_n, h_{\partial R_n}\right) & \geq \E (f^1) - \eta \\
		\end{align}
		Choose $\eta >0$, we know from~\eqref{e_profile_theorem} that all asymptotic profile $f$ there exist $\varepsilon_f$ (depending on $f$ and $\eta$) and $\delta_{f}$ (depending on $h$, $\eta$ and $\varepsilon_f$) such that
		\begin{align}
		- \lim_{n \to \infty} \frac{1}{|R_n|} \ln \left| HP_n \left( f , \delta_{f,\varepsilon_f} , \varepsilon_f \right) \right| & \geq \E (f^1)-\eta.
		\end{align}
		Since the set of $1$-Lipschitz functions defined on a closed bounded region is compact, there exist an integer $l$, depending on~$\eta$ but not on~$n$, and a collection of asymptotic height profiles~$f_1,\ldots, f_l$ such that
		\begin{align}\label{e_variational_principle_lower_bound_1}
		M (R_n, h_{\partial R_n}) \subset \bigcup_{i=1}^l HP_n \left( f_i , \delta_{f_i} ,\varepsilon_{f_i} \right). 
		\end{align}
		By taking the logarithm and using the identity~\eqref{e_profile_theorem} of Theorem \ref{p_profile_theorem} we obtain that
		\begin{align}
		- \lim_{n \to \infty} \frac{1}{|R_n|} \Ent \left( \Lambda_n, h_{\partial R_n}\right) & \geq - \lim_{n \to \infty} \frac{1}{|R_n|} \ln \left|\bigcup_{i=1}^l HP_n \left( f_i , \delta_{f_i} ,\varepsilon_{f_i} \right)\right| \\
                                                                                                     & \geq - \lim_{n \to \infty} \frac{\ln l}{|R_n|} \\
                  & \qquad - \lim_{n \to \infty} \frac{1}{|R_n|} \max_{1 \leq i \leq l} \ln \left| HP_n \left( f_i , \delta_{f_i} ,\varepsilon_{f_i} \right)\right| \\
		& \geq \E \left(f^1_{min}\right) - \eta,
		\end{align}
		which finishes the proof.
	\end{proof}

\begin{center}
 \textsc{Acknowledgement}
\end{center}

The authors want to thank Marek Biskup, Nishant Chandgotia, Filippo Colomo, Nicolas Destainville, Richard Kenyon, Michel Ledoux, Igor Pak, Laurent Saloff-Coste, Scott Sheffield, Peter Winkler and Tianyi Zheng for helpful discussions and comments. The authors also would like to express special thanks of gratitude to the anonymous referees and to Andrew Krieger. Their precise comments and questions helped a lot to further improve the manuscript.

\bibliographystyle{alpha}
\bibliography{bib}

\newcommand{\etalchar}[1]{$^{#1}$}
\begin{thebibliography}{DCPSS17}

\bibitem[Azu67]{Azu67}
Kazuoki Azuma.
\newblock Weighted sums of certain dependent random variables.
\newblock {\em T\^ohoku Math. J. (2)}, 19:357--367, 1967.

\bibitem[BCG16]{BCG2016}
Alexei Borodin, Ivan Corwin, and Vadim Gorin.
\newblock Stochastic six-vertex model.
\newblock {\em Duke Math. J.}, 165(3):563--624, 02 2016.

\bibitem[BW00]{BrWi00}
Graham~R. Brightwell and Peter Winkler.
\newblock Gibbs measures and dismantlable graphs.
\newblock {\em J. Combin. Theory Ser. B}, 78(1):141--166, 2000.

\bibitem[CEP96]{CEP96}
Henry Cohn, Noam Elkies, and James Propp.
\newblock Local statistics for random domino tilings of the {A}ztec diamond.
\newblock {\em Duke Math. J.}, 85(1):117--166, 1996.

\bibitem[Cer06]{Cer06}
R.~Cerf.
\newblock {\em The {W}ulff crystal in {I}sing and percolation models}, volume
  1878 of {\em Lecture Notes in Mathematics}.
\newblock Springer-Verlag, Berlin, 2006.
\newblock Lectures from the 34th Summer School on Probability Theory held in
  Saint-Flour, July 6--24, 2004, With a foreword by Jean Picard.

\bibitem[Cha16]{chandgotia2015entropyminimal}
Nishant Chandgotia.
\newblock Four-cycle free graphs, height functions, the pivot property and
  entropy minimality.
\newblock {\em Ergodic Theory and Dynamical Systems}, FirstView:1--31, 3 2016.

\bibitem[CKP01]{CKP01}
Henry Cohn, Richard Kenyon, and James Propp.
\newblock A variational principle for domino tilings.
\newblock {\em J. Amer. Math. Soc.}, 14(2):297--346 (electronic), 2001.

\bibitem[CPST18]{ChPeShTa18}
Nishant Chandgotia, Ron Peled, Scott Sheffield, and Martin Tassy.
\newblock Delocalization of uniform graph homomorphisms from $\mathbb{Z}^2$ to
  $\mathbb{Z}$, 10 2018.

\bibitem[CS16]{CoSp16}
F.~Colomo and A.~Sportiello.
\newblock Arctic curves of the six-vertex model on generic domains: The tangent
  method.
\newblock {\em Journal of Statistical Physics}, 164(6):1488--1523, 2016.

\bibitem[DCPSS17]{duminil2017exponential}
Hugo Duminil-Copin, Ron Peled, Wojciech Samotij, and Yinon Spinka.
\newblock Exponential decay of loop lengths in the loop o (n) model with large
  n.
\newblock {\em Communications in Mathematical Physics}, 349(3):777--817, 2017.

\bibitem[Des98]{Des98}
N.~Destainville.
\newblock Entropy and boundary conditions in random rhombus tilings.
\newblock {\em J. Phys. A}, 31(29):6123--6139, 1998.

\bibitem[DKS92]{DKS92}
R.~Dobrushin, R.~Koteck{\'y}, and S.~Shlosman.
\newblock {\em {W}ulff construction}, volume 104 of {\em Translations of
  Mathematical Monographs}.
\newblock American Mathematical Society, Providence, RI, 1992.
\newblock A global shape from local interaction, Translated from the Russian by
  the authors.

\bibitem[Dur10]{durrett}
Rick Durrett.
\newblock {\em Probability: theory and examples}.
\newblock Cambridge Series in Statistical and Probabilistic Mathematics.
  Cambridge University Press, Cambridge, fourth edition, 2010.

\bibitem[FS97]{FunSpo97}
T.~Funaki and H.~Spohn.
\newblock Motion by mean curvature from the ginzburg-landau interface model.
\newblock {\em Communications in Mathematical Physics}, 185(1):1--36, Apr 1997.

\bibitem[FS06]{FS06}
Patrik~L. Ferrari and Herbert Spohn.
\newblock Scaling limit for the space-time covariance of the stationary totally
  asymmetric simple exclusion process.
\newblock {\em Comm. Math. Phys.}, 265(1):1--44, 2006.

\bibitem[Geo88]{Georgii}
Hans-Otto Georgii.
\newblock {\em Gibbs measures and phase transitions}, volume~9 of {\em de
  Gruyter Studies in Mathematics}.
\newblock Walter de Gruyter \& Co., Berlin, 1988.

\bibitem[Hoe63]{Hoe63}
Wassily Hoeffding.
\newblock Probability inequalities for sums of bounded random variables.
\newblock {\em J. Amer. Statist. Assoc.}, 58:13--30, 1963.

\bibitem[Kas63]{Kas63}
P.~W. Kasteleyn.
\newblock Dimer statistics and phase transitions.
\newblock {\em J. Mathematical Phys.}, 4:287--293, 1963.

\bibitem[Kir34]{Kir34}
M.~Kirszbraun.
\newblock {\"U}ber die zusammenziehende und {L}ipschitzsche {T}ransformationen.
\newblock {\em Fundamenta Mathematicae}, 22(1):77--108, 1934.

\bibitem[KK92]{kenyon1992tiling}
Claire Kenyon and Richard Kenyon.
\newblock Tiling a polygon with rectangles.
\newblock In {\em Foundations of Computer Science, 1992. Proceedings., 33rd
  Annual Symposium on}, pages 610--619. IEEE, 1992.

\bibitem[Klo08]{Kloe08}
Benoît Kloeckner.
\newblock On {L}ipschitz compactifications of trees.
\newblock {\em Comptes Rendus Mathematique}, 346(7):413 -- 416, 2008.

\bibitem[KO{\etalchar{+}}07]{kenyon2007limit}
Richard Kenyon, Andrei Okounkov, et~al.
\newblock Limit shapes and the complex burgers equation.
\newblock {\em Acta mathematica}, 199(2):263--302, 2007.

\bibitem[KS99]{KieSpo99}
Michael K.-H. Kiessling and Herbert Spohn.
\newblock A note on the eigenvalue density of random matrices.
\newblock {\em Communications in Mathematical Physics}, 199(3):683--695, Jan
  1999.

\bibitem[LP08]{LP08}
Lionel Levine and Yuval Peres.
\newblock Strong spherical asymptotics for rotor-router aggregation and the
  divisible sandpile.
\newblock {\em Potential Analysis}, 30(1):1, 2008.

\bibitem[LRS01]{LRS01}
Michael Luby, Dana Randall, and Alistair Sinclair.
\newblock Markov chain algorithms for planar lattice structures.
\newblock {\em SIAM J. Comput.}, 31(1):167--192, 2001.

\bibitem[LS77]{LS77}
B.~F. Logan and L.~A. Shepp.
\newblock A variational problem for random {Y}oung tableaux.
\newblock {\em Advances in Math.}, 26(2):206--222, 1977.

\bibitem[MKT17]{MeKrTa17}
Georg Menz, Andrew Krieger, and Martin Tassy.
\newblock A quenched variational principle for discrete random maps.
\newblock {\em arXiv:1710.11330}, 2017.

\bibitem[MPT18]{morales2018asymptotics}
Alejandro Morales, Igor Pak, and Martin Tassy.
\newblock Asymptotics for the number of standard tableaux of skew shape and for
  weighted lozenge tilings.
\newblock {\em arXiv preprint arXiv:1805.00992}, 2018.

\bibitem[NR16]{ReSr16}
Ananth~Sridhar Nicolai~Reshetikhin.
\newblock Limit shapes of the stochastic six vertex model.
\newblock {\em arXiv:1609.01756}, 2016.

\bibitem[PR07]{PR07}
Boris Pittel and Dan Romik.
\newblock Limit shapes for random square {Y}oung tableaux.
\newblock {\em Adv. in Appl. Math.}, 38(2):164--209, 2007.

\bibitem[PST16]{PST}
Igor Pak, Adam Sheffer, and Martin Tassy.
\newblock Fast domino tileability.
\newblock {\em Discrete Comput Geom}, 56(2):377--394, 2016.

\bibitem[Sch69]{Sch69}
J.~T. Schwartz.
\newblock {\em Nonlinear functional analysis}.
\newblock Gordon and Breach Science Publishers, New York-London-Paris, 1969.
\newblock Notes by H. Fattorini, R. Nirenberg and H. Porta, with an additional
  chapter by Hermann Karcher, Notes on Mathematics and its Applications.

\bibitem[She02]{sheffield2002ribbon}
Scott Sheffield.
\newblock Ribbon tilings and multidimensional height functions.
\newblock {\em Transactions of the American Mathematical Society},
  354(12):4789--4813, 2002.

\bibitem[She05]{She05}
Scott Sheffield.
\newblock Random surfaces.
\newblock {\em Ast\'erisque}, (304):vi+175, 2005.

\bibitem[Tas14]{tassythesis}
Martin Tassy.
\newblock {\em Translation invariant Gibbs measure on tilings}.
\newblock In preparation, Brown University, 2014.

\bibitem[Val43]{Val43}
F.~A. Valentine.
\newblock On the extension of a vector function so as to preserve a {L}ipschitz
  condition.
\newblock {\em Bull. Amer. Math. Soc.}, 49:100--108, 1943.

\bibitem[vB77]{Henk77}
Henk van Beijeren.
\newblock Exactly solvable model for the roughening transition of a crystal
  surface.
\newblock {\em Physical Review Letters}, 38, 05 1977.

\bibitem[VK77]{VK77}
A.~M. Ver{\v{s}}ik and S.~V. Kerov.
\newblock Asymptotic behavior of the {P}lancherel measure of the symmetric
  group and the limit form of {Y}oung tableaux.
\newblock {\em Dokl. Akad. Nauk SSSR}, 233(6):1024--1027, 1977.

\bibitem[Wig59]{Wig59}
E.~Wigner.
\newblock Statistical properties of real symmetric matrices with many
  dimensions.
\newblock In {\em 4th Can. Math. Congress (Banff 1957)}, page 174–184. Univ.
  Toronto Press, 1959.

\bibitem[Wil04]{Wil04}
David~Bruce Wilson.
\newblock Mixing times of lozenge tiling and card shuffling {M}arkov chains.
\newblock {\em Ann. Appl. Probab.}, 14(1):274--325, 02 2004.

\end{thebibliography}

\end{document}